\documentclass[a4paper,12pt,reqno]{amsart}
\usepackage{amsfonts}
\usepackage{amsmath}
\usepackage{amssymb}
\usepackage[a4paper]{geometry}
\usepackage{mathrsfs}
\usepackage{csquotes}
\usepackage{mathrsfs}
\usepackage[toc,page]{appendix}
\usepackage[colorlinks]{hyperref}
\renewcommand\eqref[1]{(\ref{#1})} 
%
%
\setlength{\textwidth}{15.2cm}
\setlength{\textheight}{22.7cm}
\setlength{\topmargin}{0mm}
\setlength{\oddsidemargin}{3mm}
\setlength{\evensidemargin}{3mm}
\setlength{\footskip}{1cm}


\numberwithin{equation}{section}
\theoremstyle{plain}
\newtheorem{thm}{Theorem}[section]
\newtheorem{prop}[thm]{Proposition}

 \newtheorem{exa}[thm]{Example}
\theoremstyle{definition}
\newtheorem{defn}[thm]{Definition}
\newtheorem{rem}[thm]{Remark}

\newcommand{\G}{\mathbb G}

\def\e[#1]{{\textrm{e}}^{#1}}

\def\G{{\mathbb G}}



\begin{document}

   \title[Quantizations on the Engel and the Cartan groups]
 {Quantizations on the Engel and the Cartan groups}

\author[M. Chatzakou]{Marianna Chatzakou}
\address{
  Marianna Chatzakou:
  \endgraf
  Department of Mathematics
  \endgraf
  Imperial College London
  \endgraf
  180 Queen's Gate, London SW7 2AZ
  \endgraf
  United Kingdom
  \endgraf
  {\it E-mail address} {\rm m.chatzakou16@imperial.ac.uk}
  }

     \keywords{Engel Croup, Cartan group, difference operators, graded groups, spectral multipliers, symbolic calculus, quantization, sub-Laplacian}

     \begin{abstract}
     This work aims to develop a global quantization in the concrete settings of two graded nilpotent Lie groups of 3-step; namely of the Engel group and the Cartan group. We provide a preliminary analysis on the structure and the representations of the aforementioned groups, and their corresponding Lie algebras. In addition, the explicit formulas for the difference operators in the two settings are derived, constituting the necessary prerequisites for the constructions of the $\Psi^{m}_{\rho,\delta}$ classes of symbols in both cases. In the case of the Engel group, the relation between the Kohn-Nirenberg quantization and the representations of the Engel group enables us to express operators in this setting in terms of quantization of symbols in the Euclidean space. We illustrate the $L^p-L^q$ boundedness for spectral multipliers theorems with  particular examples of operators in both groups, that yield appropriate Sobolev-type inequalities. As a further application of the above, we get some consequences for the $L^p-L^q$ norm for the heat kernel of those particular examples of operators in both settings. 
     \end{abstract}
     \maketitle

       \tableofcontents

\section{Introduction}
\label{SEC:intro}
Since their initiation in the sixties, pseudo-differential operators  have become a standard tool in the study of partial differential equations. In the Euclidean setting pseudo-differential operators are defined \textit{globally} as the \textit{quantization} of a smooth function called the \textit{symbol} via the Euclidean Fourier transform. In this setting, pseudo-differential operators are linear operators usually defined by
\begin{equation}\label{khon-euc}
a(x,D)u(x)=\int_{\mathbb{R}^n}e^{2\pi i x \cdot\xi}a(x,\xi) \hat{u}(\xi)\,d\xi\,,\quad u \in \mathcal{S}(\mathbb{R}^n)\,,
\end{equation} 
where $\hat{u}$ denotes the Euclidean Fourier transform of $u$ in the Schwartz space $\mathcal{S}(\mathbb{R}^n)$.  In \eqref{khon-euc} the pseudo-differential operator $a(x,D)$ arises as the \textit{Kohn-Nirenberg quantization} of the symbol $a$. \\
\indent The utility of pseudo-differential operators is due to the fact that they carry similar properties as their differential counterparts, such as the behaviour of a generalised notion of the order under composition and adjunction, as well as bounded properties between suitable Sobolev spaces. Adjunction and composition formulas are expressed in term of the corresponding symbols, and thus, the \textit{symbolic calculus} is exactly the calculus on the symbolic side that gives rise to the calculus of operators, while also the calculus  contains the parametrices of the elliptic pseudo-differential operators. \\
\indent It is natural question whether a global pseudo-differential calculus can be defined in other, more complicated, settings. To this end, in 2010 a global symbolic calculus was developed on compact Lie groups in \cite{RT10}. In that work the authors defined the \textit{global symbol} of a pseudo-differential operator via the group Fourier transform of the right-convolution kernel (on any connected manifold pseudo-differential operators can always be defined locally via local charts). They provide a definition of the classes of symbols in this setting, so that the associated  (via a suitable quantization procedure) operators form an algebra of operators  ‘close enough’  to the one in the Euclidean setting. The most pivotal part of this work was the definition of \textit{difference operators}, which generalises the derivatives in the Fourier variable from the Euclidean case, and thus allows expressing the pseudo-differential behaviour precisely on the group.  \\
\indent Later on, the case of graded nilpotent Lie groups was treated in \cite{FR16}. However, under this new consideration the results developed in the compact case cannot be readily extended. In particular, they dealt with the technical difficulties that arise from the fact that the dual of the group is no longer discrete and the unitary irreducible representations are infinite dimensional, and more crucially, from the fact that the Laplace-Betrami operator has to be replaced by operators associated with the group via its Lie algebra structure. On stratified Lie groups these are the sub-Laplacians, or more generally, the so-called Rockland operator, and such operators are no longer elliptic but hypoelliptic.\\
 \indent In their monograph, the authors study the global quantization of operators on graded Lie groups, aiming to provide an intrinsic symbolic calculus of the operators in this setting. They give an adequate definition of the difference operators in this setting, which is somehow closer to the one in the Euclidean setting, and therefore, by proving symbolic estimates on the functional calculus for the sub-Laplacian, or the Rockland operator, they provide a way to obtain the formulas for composition and adjunction.  The origins of this go back to the 1970's with the works of E. Stein, G. Folland and L. Rothschild (see e.g. \cite{FS74},\cite{RS77}) among many others, motivated by the study of differential operators on CR or compact manifolds.\\
  \indent  In terms of the developed calculus there, to the author knowledge, most of the versions of the global calculi of operators on homogeneous Lie groups (nilpotent Lie groups endowed with some structure of dilations) that have appeared are, except for a few notable exceptions, calculi of left-invariant operators. Since the seventies, the works on a non-invariant pseudo-differential calculus were limited to \cite{Dyn76}  where S. Dynin considers certain operators on the Heisenberg groups, and to \cite{Fol94}, where G. Folland developed a calculus on any homogeneous group where the classes are given in terms of the kernel, i.e., they are not symbolic. Another version of a non-invariant calculus on any homogeneous group is given in \cite{CGGP92} but this is not symbolic as well. Finally, in \cite{Tayl84} M. Taylor describes a way that one can obtain a symbolic (non-invariant) calculus by defining a \textit{general quantization} and the \textit{general symbols} on any unimodular type I group, but chose to restrict his analysis mainly to invariant operators on the Heisenberg groups.\\
\indent The basic example, apart from $\mathbb{R}^n$, of a nilpotent Lie group, is the Heisenberg group where the associated Fourier analysis is very well studied. In  particular, most of the works that concern the non-invariant symbolic calculus on nilpotent Lie groups are restricted to the Heisenberg groups (or to manifolds having the Heisenberg group as local model), and this is mainly due to the link between the representations of the Heisenberg groups and the Weyl quantization that enables the development of the pseudo-differential calculus on these groups with scalar-valued symbols that depend on parameters. To the best of our knowledge, the only non-invariant calculi with scalar-valued symbols on the Heisenberg groups has been developed in \cite{BFKG12} by H. Bahouri, C. Fermanian-Kammerer and I.Gallagher, while in \cite[Chapter 6]{FR16} one can find  the application of the general theory on graded Lie groups to this particular setting- partially these results had previously been announced in \cite{FR14}. The last two works differ in the conditions on the symbol classes for  small values of the parameters.\\
\indent Besides the big amount of work devoted to the case of the Heisenberg group which is of 2-step, the same motivating aspects appear as well for other graded Lie groups. In particular, our interest in the two graded nilpotent Lie groups of 3-step considered in this work; namely the Engel and the Cartan group (or the generalised Dido problem), is justified by the fact that they are perhaps the most elementary nilpotent Lie groups of step higher that two not yet studied in the context of global pseudo-differential calculus.\\
 \indent  To lay down the necessary foundation for the development of the symbol classes on a nilpotent Lie group, one needs first to study the positive Rockland operators on the group and the associated Sobolev spaces, later on to study and generalise the group Fourier transform and finally to find the concrete formulas for the difference operators on the group. \\
\indent  The paper is organised as follows. In Section \ref{SEC:prelboth}, we give the necessary preliminaries for the construction of the classes of symbols as explained above. In Section \ref{SEC:Groups} we explain the precise settings of our investigation for the groups we consider, including a description of their dual, and of the group Fourier transform in each setting, which, for the case of the Engel group, leads to symbols parametrized by $(\lambda,\mu)$-the co-adjoint orbits. On Section \ref{SEC:Dif}, we find the explicit formulas for the difference operators in both settings.
   The rest of the paper is to some extent independent. In particular, in Section \ref{SEC:multipliers} we prove the $L^p-L^q$ boundedness of spectral multipliers $\phi(D)$ for $D$ being a non-Rockland operator in the setting of the aforementioned groups, as an application of the groups' analogue of the H\"{o}rmander $L^p-L^q$ multiplier theorem  \cite[p.106, Theorem 1.11]{Hor60}. As a straightforward application, we then recover Sobolev-type embedding inequalities. Similar results have been shown at the beginning in the compact case for the case where $D$ is the sub-Laplacian in \cite{HK16}, and in the graded case for $D$ being a Rockland operator, first for the Heisenberg group $\mathbb{H}^n$ in \cite[Section 9.3]{AR17}, and later on, for any stratified group in \cite{RR18}. Additionally, we have shown how the above spectral multiplier estimates can be used to relate spectral properties of particular operators considered here, with time decay rates for propagators for the case of the corresponding heat equations. Similar type techniques, have been used only for the case of the compact Lie group, and one can refer to \cite{AR20}. 
   \section{Preliminaries}
   \label{SEC:prelboth}
   \subsection{The unitary dual and the group Fourier transform}
    We denote by $\hat{\G}$ the unitary dual of the group $\G$, that is, the set of all equivalence classes of irreducible, strongly continuous and unitary representations of $\G$. In our settings, the \textit{Fourier transform} at $\pi \in \hat{\G}$ is defined on $L^{1}(\G,dx)$ by
  \begin{equation}\label{group,fourier,gen}
\pi(\kappa)= \hat{\kappa}(\pi):= \int_{\G}\kappa(x) \pi(x)^{*} dx\,,
\end{equation}
where $dx$ denotes the \textit{Haar measure}, and $\pi^{*}(x)$ the adjoint operator of $\pi(x)$. This defines a linear mapping on the representation space $\mathcal{H}_{\pi}$, i.e., $\hat{\kappa}(\pi):\mathcal{H}_{\pi} \rightarrow\mathcal{H}_{\pi}$.\\
   The group Fourier transform of a vector in the Lie algebra of the group, say $X \in \mathfrak{g}$, at $\pi \in \hat{\G}$ is the operator $\pi(X)$ on $\mathcal{H}_{\pi}^{\infty}\subset \mathcal{H}_{\pi}$, the subspace of smooth vectors, given via
   \begin{equation}\label{defn.infin}
   \pi(X)v=\partial_{t=0}(\pi(exp_{\G}(tX))v\,,
   \end{equation}
   where $exp_{\G}:\mathfrak{g}\rightarrow \G$ is the exponential mapping that identifies $\mathfrak{g}$ with $\G$. By setting $\pi(X^{\alpha})=\pi(X)^{\alpha}$ we extend the group Fourier transform to the Lie algebra $\mathfrak{U}(\mathfrak{g})$.
   \subsection{Rockland operator and Sobolev spaces}
   Let us recall that a (left) \textit{Rockland operator}, say $\mathcal{R}$, on a homogeneous Lie group $\G$\footnote{A graded Lie group is naturally equipped with dilations, i.e., it is homogeneous.} is a positive  left-invariant operator on $\G$ that is homogeneous of degree $\nu$, when for every non-trivial $\pi \in \hat{\G}$ the group Fourier transform $\pi(\mathcal{R})$ is injective on $\mathcal{H}_{\pi}^{\infty}$. Being positive means $(\mathcal{R}f,f)_{L^2(\G)}\geq 0$, for every $f$ in the Schwartz space $\mathcal{S}(\G)$, where as usual 
   \[
   (f_1,f_2)_{L^2(\G)}=\int_{\G}f_1(x)\overline{f_2}(x)\,dx\,.
   \]
   In the stratified case, we can choose $\mathcal{R}=-\mathcal{L}$, where $\mathcal{L}$ is the sub-Laplacian $\sum_{i=1}^{n_1}X_{i}^{2}$, where $\{X_i\}_{i=1}^{n_1}$ are the elements of the first stratum of $\mathfrak{g}$. In this case the homogeneous dimension of $\mathcal{R}$ is $\nu=2$. Furthermore, by \cite[Chapter 4.B]{FS82} any positive Rockland operator $\mathcal{R}$, as an operator on $\mathcal{D}(\G)$, admits a self-adjoint extension on $L^2(\G)$.\\
    Recall also that, for $1 \leq p \leq \infty$, let $\mathcal{R}_p$ denote the extension of $\mathcal{R}$ to $L^{p}(\G)$; for a characterisation of the operator $\mathcal{R}_{p}$, see \cite[Section 4.3.1]{FR14}.\\
   Now, for a fixed Rockland operator $\mathcal{R}$ of homogeneous degree $\nu$, following \cite[Sections 4.4 and 5.1]{FR14}, we define the subsequent spaces, which, for our scope, will be particularly used to give a meaning to the difference operator discussed later in the Section.
   \begin{defn}
   If $p \in [1,\infty)$ and $s \in \mathbb{R}$, we denote by $L^{p}_{s}(\G)$ the Sobolev space obtained by the completion of $\mathcal{S}(\G)$ with respect to the norm 
\[
\|f\|_{L^{p}_{s}(\G)}:=\|(I+\mathcal{R}_{p})^{\frac{s}{\nu}}f\|_{L^{p}(\G)}\,,\quad f \in \mathcal{S}(\G)\,.
\]
   \end{defn}
  Notice that, if $s=0$, then $L_{0}^{p}(\G)=L^{p}(\G)$ for $p \in [1,\infty)$, with $\|\cdot\|_{L_{0}^{p}(\G)}=\|\cdot\|_{L^p(\G)}$.\\
  
  In our setting, the group Fourier transform is an isomorphism between Banach spaces acting from $L^{2}_{a}(\G)$ onto the \textit{$\hat{\G}$-fields of operators} denoted by $L^{2}_{a}(\G)$ defined below:
  \begin{defn}
 Let $a \in \mathbb{R}$. We denote by $L^{2}_{a}(\G)$ the space of fields of operators $\sigma=\{\sigma_{\pi}:\mathcal{H}_{\pi}^{\infty} \rightarrow \mathcal{H}_{\pi}^{a}, \pi \in \hat{\G}\}$ such that 
 \[
 \{\pi(I+\mathcal{R})^{\frac{a}{\nu}}\sigma_{\pi}:\mathcal{H}_{\pi}^{\infty} \rightarrow \mathcal{H}_{\pi}\,, \pi \in \hat{\G}\} \in L^2(\hat{\G})\,.
 \]
 For such a $\sigma$, we set $\|\sigma\|_{L^{2}_{a}(\hat{\G})}:=\|\pi(I+\mathcal{R})^{\frac{a}{\nu}}\sigma_{\pi}\|_{L^2(\hat{\G})}$.\\
 Notice that each $\pi \in \hat{\G}$ is viewed as subset of $\text{Rep}\,\G$.
   \end{defn}
   Recall that the space $\mathcal{H}_{\pi}^{a}$ is the Sobolev space obtained by completion of $\mathcal{H}_{\pi}^{\infty}$ with respect to the norm 
   \[
   \|f\|_{\mathcal{H}_{\pi}^{\infty}}:=\|\pi(I+\mathcal{R})^{\frac{a}{\nu}}f\|_{\mathcal{H}_{\pi}}\,.
   \]
  The next definition is devoted to the the extension of the group Fourier transform to a suitable subset of tempered distribution as we will see later in the section. In particular the following space of fields of operators will turn out to be the image of the group Fourier transform of the later. 
   \begin{defn}\label{Labinf}
Let $\alpha, \beta \in \mathbb{R}$. We denote by $L^{\infty}_{\alpha, \beta}(\hat{\G})$ the space of fields of operators $\sigma = \{ \sigma_{\pi} : \mathcal{H}_{\pi}^{\infty} \rightarrow \mathcal{H}_{\pi}^{b}\,, \pi \in \hat{\G} \}$ such that for some $C>0$
\begin{equation}\label{Linfab}
||\sigma \hat{\phi} ||_{L^{2}_{b}(\hat{\G})} \leq C ||\phi||_{L^{2}_{\alpha}(G)}\,,\quad \phi \in \mathcal{S}(\G)\,,
\end{equation}
and for such a field $\sigma$, the associated norm $\|\sigma\|_{L^{\infty}_{\alpha,\beta}(\hat{\G})}$ is given by the infimum over the constants $C>0$ such that \eqref{Linfab} holds true.
\end{defn}
\begin{rem}\label{eqiv.norms}
We notice that any two norms of the above spaces corresponding to different choices of Rockland operators are equivalent, see \cite[Theorem 4.4.20, Proposition 5.1.7 and Lemma 5.1.7]{FR16}.
\end{rem}
\subsection{Quantization and symbol classes}
As recalled in the introduction there is a natural quantization introduced by \cite{Tayl84} which is valid on any unimodular type-I group due to the Plancherel formula and produces operators $\mathcal{D}(\G) \rightarrow \mathcal{D}^{'}(\G)$, where by $\mathcal{D}(\G)$ we denote the space of smooth and compactly supported functions on $\G$. In particular, the \textit{quantization}, i.e., the mapping $\sigma \mapsto Op(\sigma)$ is analogous to the Kohn-Nirenberg quantization in the Euclidean setting and associates an operator $Op(\sigma)$ to a \textit{symbol} $\sigma$ in the following way: For any $f \in \mathcal{D}(\G)$ and $x \in \G$, the operator

\begin{equation}\label{quant}
Op(\sigma) \phi(x) = \int_{\hat{G}} \text{Tr} \left( \pi(x) \sigma(x,\pi) \hat{\phi}(x) \right)\,d\mu(\pi)\,,
\end{equation} 
where $\mu$ is the Plancherel measure on $\hat{\G}$, is well-defined and continuous. The 
Recall that a \textit{symbol} $\sigma$ is a field of operators $\{\sigma(x,\pi):\mathcal{H}_{\pi}^{\infty} \rightarrow \mathcal{H}_{\pi}, (x,\pi) \in \G \times \hat{\G} \}$, satisfying for each $x \in \G$
\begin{equation}\label{def.symb}
\sigma(x,\cdot):=\{\sigma(x,\pi):\mathcal{H}_{\pi}^{\infty} \rightarrow \mathcal{H}_{\pi},\pi \in \hat{\G}\}\in L^{\infty}_{a,b}(\hat{\G})\,,
\end{equation}
for some $a,b \in \mathbb{R}$, where $L^{\infty}_{a,b}(\hat{\G})$ is as in Definition \ref{Labinf}.\\
Concrete examples of symbols that do not depend on $x \in \G$ are operators of the form $\pi(X)^{\alpha}$, $\alpha \in \mathbb{N}_{0}^{n}$. 

In order to motivate the work presented in this paper, we focus our attention in the symbol classes $S^{m}_{\rho, \delta}= S^{m}_{\rho, \delta}(\G)$ for $(\rho,\delta)$ with $0 \leq \delta \leq \rho \leq 1$ and $m \in \mathbb{R}\cup \{-\infty\}$, introduced in \cite[Section 5.2]{FR16} in the context of a graded Lie group $\G$. An application of the quantization process described in \eqref{quant} then, yields the corresponding operator classes 
\[
\Psi^{m}_{\rho, \delta} = \text{Op} (S^{m}_{\rho,\delta})\,.
\]

More accurately:
 \begin{defn}($\Psi_{\rho,\delta}^{m}(\G)$ symbol classes).\label{Psi.class}
 Let $m, \rho,\delta \in \mathbb{R}$ with $0 \leq \delta \leq \rho \leq 1$, and let $\mathcal{R}$ be a positive Rockland operator of homogeneous degree $\nu$. A symbol in the sense of \eqref{def.symb}, is called a symbol of order $m$ and of type $(\rho,\delta)$ if, for each $\alpha, \beta \in \mathbb{N}_{0}^{n}$ and $\gamma \in \mathbb{R}$, we have
 \[
 \sup_{x \in \G} ||X_{x}^{\beta} \Delta^{\alpha} \sigma(x, \cdot)||_{L^{\infty}_{\gamma, \rho [\alpha]-m-\delta[\beta]+\gamma}(\hat{{\G}})}< \infty \footnote{For non-zero multi-indexes $\alpha=(\alpha_1,\cdots,\alpha_N),\beta=(\beta_1,\cdots,\beta_N)$ and $\nu_1,\cdots,\nu_N$ the dilations' weights of $\G$\,, $[\alpha]:=\nu_1\alpha_1+\cdots+\nu_N \alpha_N$ and $[\beta]:=\nu_1\beta_1+\cdots\nu_N \beta_N$.}\,,
 \]
 where $\Delta^{\alpha}$ are the difference operators in $\G$, discussed extensively later in the Section.
 \end{defn}

Before giving the main properties of the symbol classes $S^{m}_{\rho,\delta}(\G)$, let us state a few remarks:
\begin{rem}\label{rockland}
For the purposes of Definition \ref{Psi.class}, we assume that the Rockland operator $\mathcal{R}$ is fixed. However, the appearing class $S^{m}_{\rho,\delta}(\G)$ exists independently of the choice of $\mathcal{R}$ as Remark \ref{eqiv.norms} suggests.
\end{rem}
\begin{rem}\label{Psi.Euc}
For the case where $\G=(\mathbb{R}^n,+)$, the symbol classes $S^{m}_{\rho, \delta}(\G)$ coincide with the usual H\"{o}rmander classes $S^{m}_{\rho, \delta}(\mathbb{R}^n)$ as in \cite{Hor85}, where the difference operator $\Delta^{\alpha}$ becomes the usual derivatives with respect to the Fourier variable (see also Example \ref{euc.dif}).
\end{rem}
\begin{rem}\label{Psi.inf}
For $m$ as in Definition \ref{Psi.class}, we can allow $m=-\infty$. This can be justified if we set $S^{-\infty} := \bigcap\limits_{m \in \mathbb{R}} S^{m}_{\rho,\delta}$ to denote the class of \textit{smoothing symbols}. In this case the associated operator class is denoted by $\Psi^{-\infty}$.
\end{rem}
The class of symbols $S^{m}_{\rho, \delta}$ fulfills the desired properties of a symbolic calculus. Briefly, the following properties hold:
\begin{itemize}
\item\,$\bigcup\limits_{ m \in \mathbb{R}}\Psi^{m}_{\rho,\delta}$ forms an algebra of operators.
\item If $T$ is in $\Psi^{m}_{\rho, \delta}$, then its formal adjoint $T^{*}$ is also in $\Psi^{m}_{\rho, \delta}$.
\item If $T \in \Psi^{m}_{\rho, \delta}$, then $T$ extends to a continuous operator from $L^{2}_{s}(\G)$ to $L^{2}_{s-m}(\G)$, for any $s \in \mathbb{R}$.
\item If $T \in \Psi^{m}_{\rho, \delta}$ satisfies the ellipticity condition in this setting, then there exists $T^{-1} \in \Psi^{-m}_{\rho, \delta}$ such that 
\[
TT^{-1}-I \in \Psi^{-\infty}\,.
\]
\end{itemize}
One can refer to (\cite[Theorem 5.5.3]{FR16}),(\cite[Theorem 5.5.12]{FR16}), (\cite[Corollary 5.7.2]{FR16}) and (\cite[Theorem 5.8.7]{FR16}), respectively, for proofs of the above properties. 
\begin{rem}
The notion of ellipticity in this setting mentioned in the last property above is not necessary to be given for the purposes of this work, and one can refer to (\cite[Definition 5.8.1]{FR16}) for a precise definition. However, it is worthwhile to mention that the last property also holds for operators that are hypoelliptic within these classes. 
\end{rem}

\subsection{Difference operators}
 
The difference operators in the setting of a graded Lie group are defined as acting on the spaces $\mathcal{K}_{a,b}(\G)$.

Recall that by $\mathcal{K}_{a,b}(\G)$ we denote the subspace of tempered distributions $\kappa \in \mathcal{S}^{'}(\G)$ such that the operator 
\[
\mathcal{S}(\G) \ni \phi \mapsto \phi \ast \kappa\,,
\]
extends  to an operator in $\mathscr{L}(L^{2}_{a}(\G),L^{2}_{b}(\G))$, where the latter stands for the set of linear bounded operators from $L^{2}_{a}(\G)$ to $L^{2}_{b}(\G)$. In view of \cite[Proposition 5.1.24]{FR16}, one can extend the definition of the group Fourier transform to the above space. 

\begin{defn}(The group Fourier transform on $\mathcal{K}_{a,b}(\G)$)
\label{four.transf.}
The group Fourier transform of a tempered distribution $\kappa \in \mathcal{K}_{a,b}(\G)$ is the field of operators $\sigma:=\{\sigma_{\pi}:\mathcal{H}_{\pi}^{\infty} \rightarrow \mathcal{H}_{\pi}^{b}, \pi \in \hat{\G}\} \in L_{a,b}^{\infty}(\hat{\G})$, where for $\pi \in \hat{\G}$ the operator $\sigma_{\pi}:=\pi(\kappa)=\hat{\kappa}(\pi)$ is such that 
\[
\sigma_{\pi}\hat{\phi}(\pi)=\widehat{\phi \ast \kappa}(\pi)\,,\quad \phi \in \mathcal{S}(\G)\,.
\]
\end{defn}
Concrete formulas for the group Fourier transform for the settings we consider in this work, will be given later in Sections \ref{SEC:Groups} and \ref{SEC:Dif}.

Following (\cite[Section 5.2.1]{FR16}), the difference operators in the setting of a graded Lie group are defined as:
\begin{defn}(Difference operators).\label{diff.op1}
For any $q \in C^{\infty}(\G)$, the difference operator associated to $q$, is the operator $\Delta_{q}$ acting by
\[
\Delta_{q}(\hat{f}):=\widehat{qf}(\pi) \equiv \pi(qf)\,,
\]
on any $f \in \mathcal{D}^{'}(\G)$ such that $ f \in \mathcal{K}_{a,b}(\G)$ and $qf \in \mathcal{K}_{a^{'},b^{'}}(\G)$ for some $a,b,a^{'},b^{'} \in \mathbb{R}$.
\end{defn}
Note that the definition of the difference operators in this setting has to be given as each $\Delta_{q}$ is acting on the fields of operators parametrized by $\hat{\G}$. For instance, if $\G=\mathbb{R}^{n}$, then the difference operators considered later in Example \ref{euc.dif} include derivatives in the dual variable.

In addition to the more general definition of a difference operator, the following proposition ensures the existence of a particular case of homogeneous polynomials on $\G$ that give rise to the exact cases of difference operators introduced in (\cite[Section 5.2.1]{FR16}), that appeared earlier in the Definition \ref{Psi.class} of the classes of symbols $S^{m}_{\rho, \delta}$. First let us introduce some necessary notation.\\

Clearly, every polynomial $P$ on  $\G$, or in terms $P \in \mathcal{P}(\G)$, can be written as a finite linear combination
\[
P=\sum_{\alpha \in \mathbb{N}_{0}^{n}} c_{\alpha}x^{\alpha}\,,
\]
with \textit{homogeneous degree} 
\[
D^{\circ}P:=\max\{[\alpha]:\alpha \in \mathbb{N}_{0}^{n}\,,c_{\alpha} \neq 0\}.
\]
If $\G$ is equipped with the dilations' weights $\nu_1,\cdots,\nu_n$, let 
\[
\mathcal{W}:=\{\nu_1 \alpha_1+\cdots+\nu_n \alpha_n : \alpha_1,\cdots,\alpha_n \in \mathbb{N}_{0} \}\,,
\]
be the set of every possible homogeneous degree $[\alpha]$, $ \alpha \in \mathbb{N}_{0}^{n}$. For the case of a stratified Lie group the above set is simply $\mathbb{N}_{0}$.
\begin{prop}\label{hom.pol}
Fix a basis of the Lie algebra of $\G$ and fix $\alpha \in \mathbb{N}_{0}^{n}$. Then, there exists a unique homogeneous polynomial $q_{\alpha}$ of homogeneous degree $[\alpha]$ such that 
\[
X^{\beta} q_{\alpha}(0) = \delta_{\alpha, \beta} = 
\begin{cases}
1, \quad \text{if}\, \beta=\alpha\,,\\
0\,,\quad \text{otherwise}\,,
\end{cases}
\beta \in \mathbb{N}_{0}^{n}\,.
\]
The polynomials $q_{\alpha}, \alpha \in \mathbb{N}_{0}^{n}$, give a basis for $\mathcal{P}(\G)$. Furthermore, for each $M \in \mathcal{W}$ and $\alpha \in \mathbb{N}_{0}^{n}$, the polynomials $q_{\alpha}$, $ [\alpha]=M$ form a basis of $\mathcal{P}_{=M}$, where $\mathcal{P}_{=M}$ denotes the set of polynomials with homogeneous degree $M$.
\end{prop}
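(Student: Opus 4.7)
My plan is to reduce the problem to invertibility of a specific square matrix at each weighted homogeneous degree, and then exhibit a triangular structure on that matrix.

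First I would exploit homogeneity: since $X^\beta$ is a homogeneous differential operator of weighted degree $[\beta]$ and we seek $q_\alpha$ homogeneous of weighted degree $[\alpha]$, the polynomial $X^\beta q_\alpha$ is homogeneous of degree $[\alpha]-[\beta]$. Consequently it vanishes identically when $[\beta]>[\alpha]$, and has zero constant term (hence vanishes at $0$) when $[\beta]<[\alpha]$. Thus the only nontrivial conditions in the proposition are those with $[\beta]=[\alpha]=:M$.

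At a fixed $M \in \mathcal{W}$, let $d_M := \#\{\beta \in \mathbb{N}_{0}^{n} : [\beta]=M\}$; the monomials $\{x^\alpha : [\alpha]=M\}$ form a basis of $\mathcal{P}_{=M}$, so $\dim \mathcal{P}_{=M} = d_M$. Consider the linear map
\[
\Phi_M : \mathcal{P}_{=M} \longrightarrow \mathbb{R}^{d_M}, \qquad q \longmapsto (X^\beta q(0))_{[\beta]=M}.
\]
Since domain and codomain have equal finite dimension, it suffices to prove $\Phi_M$ is an isomorphism; the prescription $q_\alpha := \Phi_M^{-1}(e_\alpha)$ then yields both existence and uniqueness of the required polynomial.

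The crux is the invertibility of the matrix $M_{\beta,\alpha} := X^\beta x^\alpha|_{x=0}$ for $[\alpha]=[\beta]=M$. Working in exponential coordinates adapted to the fixed basis of $\mathfrak{g}$, the Baker--Campbell--Hausdorff formula gives
\[
X_i = \partial_{x_i} + \sum_{j:\,\nu_j > \nu_i} P_{ij}(x)\,\partial_{x_j},
\]
with each $P_{ij}$ a polynomial of weighted degree $\nu_j - \nu_i$ vanishing at $0$. Expanding $X^\beta = X_1^{\beta_1}\cdots X_n^{\beta_n}$ and noting that only the pure-derivative monomial $\partial^\alpha$ in this expansion contributes a nonzero value to $X^\beta x^\alpha|_0$ (equal to $\alpha!$ times its coefficient), a direct bookkeeping argument shows that the corrections arising from the $P_{ij}\partial_{x_j}$ terms can only transfer units of differentiation from lower-weight to strictly higher-weight indices. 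Ordering weight-$M$ multi-indices so that those concentrated on higher-weight coordinates come last then renders $(M_{\beta,\alpha})$ upper triangular with diagonal entries $M_{\alpha,\alpha} = \alpha! \neq 0$, hence invertible.

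With $\Phi_M$ bijective for every $M$, the relation $\Phi_M(q_\alpha)=e_\alpha$ immediately makes $\{q_\alpha : [\alpha]=M\}$ a basis of $\mathcal{P}_{=M}$, and assembling over all $M \in \mathcal{W}$ via the direct sum decomposition $\mathcal{P}(\G)=\bigoplus_{M \in \mathcal{W}} \mathcal{P}_{=M}$ produces the required basis of $\mathcal{P}(\G)$. I expect the main obstacle to be the combinatorial verification of the triangular structure in the paragraph above, since it requires tracking how powers of the $P_{ij}$ and their derivatives enter the expansion of $X^\beta$; this ultimately rests on BCH together with the graded structure of $\mathfrak{g}$, and has the flavour of a Poincar\'e--Birkhoff--Witt-type computation.
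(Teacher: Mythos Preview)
The paper does not actually prove this proposition. It sits in the preliminaries (Section~2) and is stated without proof, as a known background fact needed to define the difference operators $\Delta^\alpha$; it is the standard Taylor-polynomial result on homogeneous groups, available for instance in the Fischer--Ruzhansky monograph \cite{FR16} that the paper cites throughout. So there is no in-paper argument to compare yours against.

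Your approach is essentially the standard one and is correct. The homogeneity reduction is clean: writing $X^\beta=\sum_\gamma c_{\beta,\gamma}(x)\partial^\gamma$ with $c_{\beta,\gamma}$ homogeneous of degree $[\gamma]-[\beta]$, one sees immediately that for $[\alpha]=[\beta]=M$ the matrix entry $X^\beta x^\alpha|_{0}$ equals $\alpha!\,c_{\beta,\alpha}$ (a constant), with diagonal $c_{\beta,\beta}=1$. The BCH form $X_i=\partial_{x_i}+\sum_{\nu_j>\nu_i}P_{ij}(x)\partial_{x_j}$ with $P_{ij}$ homogeneous of degree $\nu_j-\nu_i>0$ (hence $P_{ij}(0)=0$) is exactly what holds in exponential coordinates on a graded nilpotent group, and does force the off-diagonal constants $c_{\beta,\alpha}$ to vanish unless $\alpha$ lies strictly above $\beta$ in, say, the lexicographic order that reads the low-weight coordinates first. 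The bookkeeping you flag as the main obstacle is genuine but routine: each replacement of a $\partial_{x_i}$ by a correction term strictly increases the position of one unit of differentiation in that order, and commuting polynomial coefficients past derivatives only lowers the polynomial degree without disturbing this monotonicity. So the matrix is unit-upper-triangular up to the $\alpha!$ scaling, and your argument goes through.
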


\begin{defn}(Difference operators $\Delta^{\alpha}$).\label{diff.op2}
For $\alpha \in \mathbb{N}_{0}^{n}$, the difference operators are 
\[
\Delta^{\alpha} := \Delta_{\tilde{q}_{\alpha}}\,,
\]
where 
\[
\tilde{q}_{\alpha}(x) := q_{\alpha}(x^{-1})\,,
\]
and $q_{\alpha} \in \mathcal{P}_{=[\alpha]}$ is as in Proposition \ref{hom.pol} .
\end{defn}
As the following example shows, the difference operators generalise the notion of the derivative with respect to the Fourier variable in the Euclidean setting.
\begin{exa}[Difference operators on $\mathbb{R}^n$]\label{euc.dif}
Let $\G=(\mathbb{R}^n,+)$. Then $\hat{\mathbb{R}}^{n}$ is isomorphic to $\mathbb{R}^n$, and the polynomials $q_{\alpha}$ are simply the monomials $(\alpha!)^{-1} x^{\alpha}$. In particular, for $\alpha=\alpha_i= \left( (\delta_{i,k})_{1 \leq k \leq n} \right)$, $q_{\alpha_i}$ is the coordinate function $[x]_{i}$ and 
\[
\Delta^{\alpha_{i}} \mathcal{F}_{\mathbb{R}^n} \phi(\xi) = (2 \pi)^{-n/2} \int_{\mathbb{R}^n} e^{-i x \cdot \xi} (-x_1) \phi(x)\,dx= \left( \frac{1}{i} \frac{\partial}{\partial \xi_{i}} \right) \mathcal{F}_{\mathbb{R}^n} \phi(\xi)\,,\ \phi \in \mathcal{S}(\mathbb{R}^n)\,.
\]
More generally, for $\alpha \in \mathbb{N}_{0}^{n}$, the difference operators $\Delta^{\alpha}$ coincide with the operators $D^{\alpha} = \left(\frac{1}{i} \frac{\partial}{\partial \xi} \right)^{\alpha}$.
\end{exa}
\begin{rem}\label{dif.nonlacal}
We notice that in the previous example, if $q$ is not a coordinate function, but any smooth function, then the corresponding difference operator does not have to be local. This is also the case for the difference operators in the setting of the Engel and the Cartan group, see Remark \ref{nonlocal} for a discussion. 
\end{rem}

\section{Preliminaries on the groups}
\label{SEC:Groups}
In this section we expose the main results necessary for our analysis on the groups. As our major source mainly for the description of the Lie algebras and the corresponding Lie groups, and wherever else stated, we have used  \cite[Section 3]{BGR10}.

\subsection{The Engel group}
\label{Engel}
Let $\mathfrak{l}_4=\text{span}\{I_1,I_2,I_3,I_4\}$ be a 3-step nilpotent Lie algebra, whose generators satisfy the non-zero relations:
\[
[I_1,I_2]=I_3\,, \quad [I_1,I_3]=I_4\,.
\]
Thus, $\mathfrak{l}_4$ is graded, as it can be endowed with the  vector space decomposition
\begin{equation}\label{decomp,eng}
\mathfrak{l}_4=V_1\oplus V_2 \oplus V_3,
\end{equation}
where\[
V_1=\text{span}\{I_1,I_2\}\,, V_2=\text{span}\{I_3\}\,, \textrm{and}\,, V_3=\text{span}\{I_4\}\,,
\]
such that $[V_i,V_j] \subset V_{i+j}$.
Observe also that $\mathfrak{l}_4$ is stratified, since $V_1$ generates all of $\mathfrak{l}_4$. Thus, the corresponding Lie group, called the \textit{Engel group} and denoted by $\mathcal{B}_4$ is a homogeneous Lie group, and the natural dilations on its Lie algebra are given by 
\begin{equation}\label{Dil,eng}
D_r(I_1)=rI_1\,, D_r(I_2)=rI_2\,,D_r(I_3)=r^2 I_3\,, \textrm{and}\, D_r(I_4)=r^3 I_4\,, \quad r>0\,.
\end{equation}
In particular, $\mathcal{B}_4$ can be identified with the manifold $\mathbb{R}^4$ endowed with the group law:
\begin{eqnarray*}
\lefteqn{(x_1,x_2,x_3,x_4) \times (y_1,y_2,y_3,y_4)}\\
&:=& (x_1+y_1,x_2+y_2,x_3+y_3-x_1y_2,x_4+y_4+\frac{1}{2}x_1^2y_2-x_1y_3)\,.
\end{eqnarray*}
This identification, in turn, implies that $T_{e}\mathcal{B}_4 \simeq T_{0}\mathbb{R}^4$, where $e$ is the identity element of $\mathcal{B}_4$ and $0$ is the element $(0,0,0,0)$ in $\mathbb{R}^4$. The basis of $\mathfrak{l}^4$, now called the canonical basis, given by \cite[Section 3.2]{BGR10}, or by explicit calculations, consists of the following canonical left invariant vector fields:

\begin{equation}\label{left.inv.eng}
\begin{split}
X_1(x)&= \frac{\partial}{\partial x_1}\,, \quad X_2(x)=\frac{\partial}{\partial x_2}-x_1 \frac{\partial}{\partial x_3}+ \frac{x^{2}_{1}}{2}\frac{\partial}{\partial x_4}\,, \\
            X_3(x)&= \frac{\partial}{\partial x_3}-x_1 \frac{\partial}{\partial x_4}\,,\quad X_4(x)= \frac{\partial}{\partial x_4}\,,
\end{split}
\end{equation}
where $x=(x_1,x_2,x_3,x_4) \in \mathbb{R}^4$, that, as expected, satisfy the above relations. 
 
Additionally, we notice that the map $\text{exp}_{\mathcal{B}_4}$ is the identity map, meaning that, for the elements of $\mathcal{B}_4$, after choosing $\{X_1,X_2,X_3,X_4\}$ as a basis for $\mathfrak{l}_4$, we have the identification
\begin{equation}\label{expB_4}
(x_1,x_2,x_3,x_4)=\text{exp}_{\mathcal{B}_4}(x_1X_1+x_2X_2+x_3X_3+x_4X_4)\,,
\end{equation}
and that we can fix the Lebesgue measure $dx_1dx_2dx_3dx_4$ on $\mathbb{R}^4$, as being the Haar measure on $\mathcal{B}_4$, see \cite[Proposition 1.6.6]{FR16}. Therefore, in what follows we may formulate as
\begin{equation}\label{integ.engel}
\int_{\mathcal{B}_4}\cdots dx_1dx_2dx_3dx_4= \int_{\mathbb{R}^4} \cdots dx_1dx_2dx_3dx_4\,.
\end{equation}

By \eqref{expB_4} we can transport the dilations \eqref{Dil,eng} to the group side, i.e., 
\[
D_r(x_1,x_2,x_3,x_4)=(r x_1,r x_2, r^2 x_3, r^3 x_4)\,, \quad r>0\,,
\]
whereas the homogeneous dimension of the group is $Q_{\mathcal{B}_4}=1+1+2+3=7$.

One could have alternatively chosen the canonical right invariant vector fields, say $\tilde{X}_i\,,i=1,\cdots,4$, for a basis for $\mathfrak{l}_4$. In the following calculations we find the formulas for that basis, that later on will be useful for establishing a framework of difference operators in our setting. For $f \in C^{\infty}(\mathcal{B}_4)$, the identification \eqref{expB_4} allows us to formulate as:
\begin{align}\label{dx1,right,eng}
\tilde{X}_{1}(x)f(x)&=\frac{d}{dt}\Big|_{t=0}f\left( \left(t \cdot \text{exp}_{\mathcal{B}_4}\tilde{X}_1\right) \times x\right)\nonumber\\
&=\frac{d}{dt}\Big|_{t=0}f\left( (t,0,0,0) \times (x_1,x_2,x_3,x_4) \right)\nonumber\\
&=\frac{d}{dt}\Big|_{t=0}f\left(t+x_1,x_2,x_3-tx_2,x_4+\frac{1}{2}t^2x_2-tx_3 \right)\nonumber\\
&=\left(\frac{\partial}{\partial x_1}-x_2 \frac{\partial}{\partial x_3}-x_3 \frac{\partial}{\partial x_4} \right)f(x)\,.
\end{align}
 Analogously one has,
 \begin{align*}
 \tilde{X}_{2}(x)f(x)&=\frac{d}{dt}\Big|_{t=0}f\left( \left(t \cdot \text{exp}_{\mathcal{B}_4}\tilde{X}_2\right) \times x\right)\nonumber\\
&=\frac{d}{dt}\Big|_{t=0}f\left( (0,1,0,0) \times (x_1,x_2,x_3,x_4) \right)\nonumber\\
&=\frac{d}{dt}\Big|_{t=0}f\left(x_1,x_2+t,x_3,x_4\right)\nonumber\\
&=\left(\frac{\partial}{\partial x_2}\right)f(x)\,,
 \end{align*}
 whereas,
  \begin{equation*}\label{dx34.right,engel}
 \tilde{X}_3(x)=\frac{\partial}{\partial x_3}\,,\quad \tilde{X}_4(x)=\frac{\partial}{\partial x_4}\,.
 \end{equation*}
\begin{exa}\label{sub.Engel}
The left-invariant sub-Laplacian on the Engel group $\mathcal{B}_4$ given by 
\begin{align*}
\mathcal{L}_{\mathcal{B}_4}&= X_{1}^{2}+X_{2}^{2}\\
&=\frac{\partial^2}{\partial x_{1}^{2}}+ \left(\frac{\partial}{\partial x_2}-x_1 \frac{\partial}{\partial x_3}+ \frac{x^{2}_{1}}{2}\frac{\partial}{\partial x_4} \right)^2\,,
\end{align*}
is homogeneous of degree $2$ since $D_r(\mathcal{L}_{\mathcal{B}_4})=r^{2} \mathcal{L}_{\mathcal{B}_4}$. The positive Rockland operator in this setting is $\mathcal{R}_{\mathcal{B}_4}:=- \mathcal{L}_{\mathcal{B}_4}$, see \cite[Lemma 4.1.8]{FR16}.
\end{exa}

The next proposition has been proved by Dixmier in \cite[p.333]{Dix57}.
\begin{prop}\label{Dixmier, Engel}
	The dual space of $\mathcal{B}_4$ is $\hat{\mathcal{B}_4}= \{\pi_{\lambda, \mu}\arrowvert \lambda\neq 0, \mu,\lambda \in \mathbb{R}\}$. In particular, for each $(x_1,x_2,x_3,x_4) \in \mathcal{B}_4\,,\pi_{\lambda, \mu}(x_1,x_2,x_3,x_4)$ is acting on $L^2(\mathbb{R},\mathbb{C})$ via
\[
\pi_{\lambda, \mu}(x_1,x_2,x_3,x_4)h(u)\equiv \exp \left(i \left(-\frac{\mu}{2\lambda}x_2+\lambda x_4-\lambda x_3 u + \frac{\lambda}{2}x_2 u^2\right)\right)h(u+x_1)\,.
\]
\end{prop}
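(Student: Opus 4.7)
The natural approach is Kirillov's orbit method: for the connected, simply connected nilpotent group $\mathcal{B}_4$ there is a bijection between $\hat{\mathcal{B}}_4$ and the space of coadjoint orbits in $\mathfrak{l}_4^*$, each representation being realised as one induced from a character of a polarizing subgroup. First, I would compute the coadjoint action. Writing $\xi = \xi_1 I_1^* + \xi_2 I_2^* + \xi_3 I_3^* + \xi_4 I_4^*$ in the dual basis, the only nontrivial brackets $[I_1, I_2] = I_3$ and $[I_1, I_3] = I_4$ give $\mathrm{ad}^*(I_1)\xi = -\xi_3 I_2^* - \xi_4 I_3^*$, $\mathrm{ad}^*(I_2)\xi = \xi_3 I_1^*$, $\mathrm{ad}^*(I_3)\xi = \xi_4 I_1^*$ and $\mathrm{ad}^*(I_4) = 0$. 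Exponentiation (terminating by nilpotency) yields
\[
\mathrm{Ad}^*(\exp(uI_1))\xi = \bigl(\xi_1,\, \xi_2 - u\xi_3 + \tfrac{u^2}{2}\xi_4,\, \xi_3 - u\xi_4,\, \xi_4\bigr),
\]
while $\mathrm{Ad}^*(\exp(vI_2))$ and $\mathrm{Ad}^*(\exp(wI_3))$ translate $\xi_1$ by $v\xi_3$ and $w\xi_4$, respectively. For $\lambda := \xi_4 \neq 0$, the $I_3$-action freely translates $\xi_1$ and the $I_1$-action then kills $\xi_3$, so each such orbit meets the cross-section $\{(0, -\mu/(2\lambda), 0, \lambda)\}$ in a unique base point $\xi_0$, with $\mu := \xi_3^2 - 2\lambda\xi_2$ the second orbit invariant. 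The generic dual is therefore parametrised by $(\lambda, \mu) \in (\mathbb{R}\setminus\{0\}) \times \mathbb{R}$.

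Next, I would construct the representation by induction. The subspace $\mathfrak{h} := \text{span}\{I_2, I_3, I_4\}$ is a 3-dimensional abelian ideal, hence automatically subordinate to every $\xi$ (since $[\mathfrak{h}, \mathfrak{h}] = 0$), and its dimension matches $\tfrac{1}{2}(\dim \mathfrak{l}_4 + \dim \text{stab}(\xi_0)) = \tfrac{1}{2}(4+2) = 3$ (the stabiliser at $\xi_0$ being $\text{span}\{I_2, I_4\}$), making it a real polarization of maximal dimension. Its exponential $H = \exp \mathfrak{h} = \{(0, x_2, x_3, x_4)\}$ is a normal subgroup whose quotient $\mathcal{B}_4/H$ is identified with $\mathbb{R}$ via the section $u \mapsto (u, 0, 0, 0)$, producing the Hilbert space $L^2(\mathbb{R})$ in Mackey's realization of the induced representation. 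Setting $\chi_{\xi_0}(\exp Y) := e^{i\xi_0(Y)}$ for $Y \in \mathfrak{h}$, I define $\pi_{\lambda, \mu} := \mathrm{Ind}_H^{\mathcal{B}_4} \chi_{\xi_0}$.

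From the group law, $(x_1, x_2, x_3, x_4) = (0, x_2, x_3, x_4) \cdot (x_1, 0, 0, 0)$, so the representation factors as $\pi_{\lambda,\mu}(x_1, x_2, x_3, x_4) = \pi_{\lambda, \mu}(0, x_2, x_3, x_4) \cdot \pi_{\lambda, \mu}(x_1, 0, 0, 0)$. The transverse factor acts by translation $\pi_{\lambda, \mu}(x_1, 0, 0, 0) h(u) = h(u + x_1)$, while the $H$-factor acts by multiplication by the character at the orbit point corresponding to $u$, namely
\[
\pi_{\lambda, \mu}(0, x_2, x_3, x_4) h(u) = e^{\,i \bigl(\mathrm{Ad}^*(\exp(uI_1))\xi_0\bigr)(x_2 I_2 + x_3 I_3 + x_4 I_4)}\, h(u).
\]
Substituting $\xi_0 = (0, -\mu/(2\lambda), 0, \lambda)$ together with the formula from the first paragraph reproduces precisely the exponent $-\frac{\mu}{2\lambda} x_2 + \lambda x_4 - \lambda x_3 u + \frac{\lambda}{2} x_2 u^2$ in the proposition. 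The main obstacle is justifying that this list exhausts the (generic part of the) dual up to unitary equivalence and that each $\pi_{\lambda, \mu}$ is irreducible; both are immediate consequences of Kirillov's theorem applied to the orbit classification above: irreducibility follows from $\mathfrak{h}$ being a polarization of maximal dimension, independence of the representation from the choice of polarization gives uniqueness per orbit, and the invariants $(\lambda, \mu)$ can be recovered infinitesimally (e.g.\ $\pi_{\lambda, \mu}(I_4) = i\lambda \cdot \mathrm{Id}$ determines $\lambda$).
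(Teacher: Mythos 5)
Your argument is correct, but note that the paper itself offers no proof of this proposition: it is quoted verbatim from Dixmier \cite[p.333]{Dix57}, so you are supplying a derivation where the text only supplies a citation. Your route --- Kirillov's orbit method --- is the modern streamlined version of what Dixmier did by hand with Mackey induction before the orbit picture existed, and all the computational ingredients check out: the coadjoint formulas are right, $\xi_4=\lambda$ and $\xi_3^2-2\xi_4\xi_2=\mu$ are indeed the two invariants of the generic orbits, the stabiliser at $\xi_0=(0,-\mu/(2\lambda),0,\lambda)$ is $\mathrm{span}\{I_2,I_4\}$ so the abelian ideal $\mathfrak h=\mathrm{span}\{I_2,I_3,I_4\}$ has the correct dimension $3=\frac12(4+2)$ to be a polarization, and evaluating $\mathrm{Ad}^*(\exp(uI_1))\xi_0$ on $x_2I_2+x_3I_3+x_4I_4$ reproduces exactly the exponent $-\frac{\mu}{2\lambda}x_2+\lambda x_4-\lambda x_3u+\frac{\lambda}{2}x_2u^2$. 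One can confirm independently that your end formula is a homomorphism, since the phase satisfies the cocycle identity $A_x(u)+A_y(u+x_1)=A_{xy}(u)$ against the group law of $\mathcal B_4$. Two small points worth making explicit: first, the realization of $\mathrm{Ind}_H^G\chi_{\xi_0}$ on $L^2(G/H)$ depends on left/right covariance conventions, and a different (equally standard) choice produces a unitarily equivalent operator with $h(u-x_1)$ or with $+\lambda x_3u$; this is harmless for the classification but you should fix the convention that yields the paper's normalization, as you implicitly do. Second, as you correctly flag, the family $\{\pi_{\lambda,\mu}:\lambda\neq0\}$ exhausts only the generic stratum of $\hat{\mathcal B}_4$; the representations trivial on the centre (which factor through the Heisenberg quotient $\mathfrak l_4/\mathbb{R}I_4$) are genuinely present in the unitary dual but form a Plancherel-null set, which is the sense in which the proposition's literal statement is to be read.
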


	We assume that the Hilbert space $L^2(\mathbb{R},\mathbb{C})$ is endowed with the standard product \[ (h_1,h_2):= \int_{\mathbb{R}} h_1(u)\overline{h_2(u)}\,du\,,\]  where $du$ is the Lebesgue measure on $\mathbb{R}$.

\begin{rem}
\label{smoothvectors}
For the case of the representation $\pi_{\lambda, \mu}$ given above, as well as for any representation acting on some $L^{2}(\mathbb{R}^{m})$ of a connected, simply connected nilpotent Lie group, the space $\mathcal{H}_{\pi}^{\infty}$ is simply the Schwartz space $\mathcal{S}(\mathbb{R}^m)$, see \cite[Corollary 4.1.2]{CG90}.
\end{rem} 

By \cite[Subsection 3.2.3]{BGR10}, or by explicit calculations using \eqref{defn.infin}, the group Fourier transform of the elements of $\mathfrak{l}_4$ are the operators acting on $\mathcal{S}(\mathbb{R})$ given by
\begin{equation}
\label{inf1,2,eng}
\pi_{\lambda, \mu}(X_1)=\frac{d}{du}\,,\quad \pi_{\lambda, \mu}(X_2)=\bigg( -\frac{i\mu}{2\lambda} +\frac{i}{2}\lambda u^2 \bigg )\,.
\end{equation}
Further calculations show that
\begin{equation}
\label{inf3,4,5,eng}
\pi_{\lambda, \mu}(X_3)=-i\lambda u\,,\quad  \pi_{\lambda, \mu}(X_4)=i \lambda\,.
\end{equation}
\begin{exa}
\label{inf,sub,eng}
The group Fourier transform of the canonical sub-Laplacian on the group is, by Remark \ref{smoothvectors}, an operator from $\mathcal{S}(\mathbb{R})$ to $\mathcal{S}(\mathbb{R})$, given by
	\begin{align*}
	\pi_{\lambda, \mu}(\mathcal{L}_{\mathcal{B}_4})=\pi_{\lambda, \mu}(X_1^2+X_2^2)=& \left(\frac{d}{du} \right)^2+ \left( -\frac{i\mu}{2\lambda}+ \frac{i \lambda u^2}{2}\right)^2\\
	&= \frac{d^2}{du^2} -\frac{1}{4} \left(\lambda u^2 -\frac{\mu}{\lambda} \right)^2\,.
	\end{align*}
Observe that $\pi_{\lambda, \mu}(\mathcal{L}_{\mathcal{B}_4})$ is a particular case of the anharmonic oscillator considered in \cite{CDR18}.
\end{exa}
\begin{rem}
\label{globalsymbol}
The group Fourier transform of $\mathcal{L}_{\mathcal{B}_4}$ is also a symbol (independent of $x$) in the sense of \eqref{def.symb}. For instance, the field of operators
\[
\{\pi_{\lambda,\mu}(\mathcal{L}_{\mathcal{B}_4}): \mathcal{S}(\mathbb{R}) \rightarrow \mathcal{S}(\mathbb{R})\,, \pi_{\lambda,\mu} \in \hat{\mathcal{B}}_4\}
\]
is a symbol in $L^{\infty}_{s+2,s}(\hat{\G})$, as a consequence of \cite[Example 5.1.26]{FR16}. Later or, in Section \ref{SEC:multipliers}, we will refer to operators of that form as symbols.
\end{rem}
Due to Proposition \ref{Dixmier, Engel}, the group Fourier transform of a function $f \in L^{1}(\mathcal{B}_4)$ at $\pi_{\lambda, \mu}\in \hat{\mathcal{B}_4}$ is, as follows from \eqref{group,fourier,gen}, given by: 
\begin{equation}
\label{group-fourier-tranform-engel}
\mathcal{F}_{\mathcal{B}_4}(f)(\pi_{\lambda, \mu})\equiv\hat{f}(\pi_{\lambda, \mu})\equiv \pi_{\lambda, \mu}(f):=\int_{\mathcal{B}_4}f(x) \pi_{\lambda, \mu}(x)^{*}dx\,,
\end{equation}
where $x=(x_1,x_2,x_3,x_4) \in \mathbb{R}^4$, is a linear endomorphism of $L^2(\mathbb{R})$, and, after calculations, the adjoint $\pi_{\lambda, \mu}(x)^{*}$ of the unitary operator $\pi_{\lambda,\mu}(x)$ is acting on $h=h(u) \in L^2(\mathbb{R})$ via
\begin{equation}
\label{adjoint_pi,engel}
\pi_{\lambda,\mu}^{*}(x)h(u)=\exp \left( i \left( \frac{\mu}{2 \lambda} x_2 - \lambda x_4+\lambda x_3 (u-x_1)-\frac{\lambda}{2}x_2(u-x_1)^2 \right) \right) h(u-x_1)\,.
\end{equation}
 Rigorous computations show that $\hat{f}(\pi_{\lambda,\mu})h(u)$ can be written as 
\begin{align}
\label{fourier,tansform,engel,pseudo-dif}
\lefteqn{ \int_{\mathbb{R}^4} \bigg[ f(x_1,x_2,x_3,x_4)}\nonumber\\
& \cdot \exp \left( i \left( \frac{\mu}{2 \lambda}x_2 - \lambda x_4 +\lambda x_3 (u-x_1)-\frac{\lambda}{2}x_2 (u-x_1)^{2} \right) \right) h(u-x_1)\bigg] dx_1\,dx_2\,dx_3\,dx_4\tag{1}\\
& = (2 \pi)^{-2}\int_{\mathbb{R}^4} \int_{\mathbb{R}^4} \bigg[ \mathcal{F}_{\mathbb{R}^4}(f) (\xi, \eta, \tau, \omega) \cdot e^{ix_1 \xi}\cdot e^{i x_2 \eta}\cdot e^{i x_3 \tau}\cdot e^{i x_4 \omega} \nonumber\\
& \cdot \exp \left( i \left( \frac{\mu}{2 \lambda}x_2 - \lambda x_4 +\lambda x_3 (u-x_1)-\frac{\lambda}{2}x_2 (u-x_1)^{2} \right) \right) \nonumber\\ & \cdot h(u-x_1)\bigg] dx_1\,dx_2\,dx_3\,dx_4 d\xi\,d\eta\,d\tau\,d\omega \nonumber \\
&= -(2\pi) \int_{\mathbb{R}}\int_{\mathbb{R}} \bigg[ e^{i x_1 \xi} \mathcal{F}(f)_{\mathbb{R}^4} ( \xi, \frac{\lambda}{2}(u-x_1)^2-\frac{\mu}{2 \lambda}, \lambda(x_1-u), \lambda)h(u-x_1) \bigg] dx_1\,d\xi\nonumber\\
&=(2 \pi) \int_{\mathbb{R}} \int_{\mathbb{R}}\bigg[ e^{i(u-v)\xi} \mathcal{F}(f)_{\mathbb{R}^4}(\xi,  \frac{\lambda}{2}v^2-\frac{\mu}{2 \lambda}, -\lambda v, \lambda)h(v)\bigg] dv\,d\xi\nonumber\,,
\end{align}
where for the last inequality we have applied the change of variable  $v=u-x_1$.\\
Here the Fourier transform $\mathcal{F}_{\mathbb{R}^m}(f)(\xi)$ is defined via:
\[
\mathcal{F}_{\mathbb{R}^m}(f)(\xi):= (2 \pi)^{-m/2} \int_{\mathbb{R}^m}f(x)e^{-ix \xi}dx\,,\quad x\,, \xi \in \mathbb{R}^{m}\,,
\]
so that the Fourier inversion theorem becomes
\[
\int_{\mathbb{R}^m}\int_{\mathbb{R}^m}e^{i(u-v)\xi}f(v)\,dv\,d\xi=(2\pi)^m f(u)\,,\quad u,v,\xi \in \mathbb{R}^m\,.
\]
 In particular, the above calculations show that 
 \begin{equation}\label{g.f.t.eng.quant}
 \mathcal{F}_{\mathcal{B}_4}(f)(\pi_{\lambda,\mu})=Op[a_{f,\lambda,\mu}(\cdot,\cdot)]\,,
 \end{equation}
 where 
 \[
 a_{f,\lambda,\mu}(v,\xi)=(2\pi)^2 \mathcal{F}_{\mathbb{R}^4}(f)(\xi,\frac{\lambda}{2}v^2-\frac{\mu}{2\lambda},-\lambda v, \lambda)\,,
\] 
and $Op$ denotes the Kohn-Nirenberg quantization, that is for a smooth symbol $a$ on $\mathbb{R}\times \mathbb{R}$ the operator 
\[
Op(a)f(u)=(2\pi)^{-1}\int_{\mathbb{R}}\int_{\mathbb{R}}e^{i(u-v)\xi}a(v,\xi)f(v)\,dv\,d\xi\,,
\]
 for $f \in \mathcal{S}(\mathbb{R})$ and $u \in \mathbb{R}$.\\
 
Let us finally note that by \eqref{g.f.t.eng.quant}, we see that for a symbol $\sigma$ as in \eqref{def.symb} quantized as:
\[
\sigma(x,\pi_{\lambda,\mu})\equiv \sigma(x,\lambda,\mu)=Op(a_{\kappa_x,\lambda,\mu})\,,
\] 
then its symbol that is given by 
\begin{equation}\label{symbol,lamda,mu}
a_{\kappa_x,\lambda,\mu}(v,\xi)=(2\pi)^2 \mathcal{F}_{\mathbb{R}^4}(\kappa_x)(\xi,\frac{\lambda}{2}v^2-\frac{\mu}{2\lambda},-\lambda v,\lambda)\,,
\end{equation}
where $\{\kappa_x(y)\}$ is the kernel of the symbol $\sigma(x,\lambda,\mu)$, i.e., 
\[
\sigma(x,\lambda,\mu)=\pi_{\lambda,\mu}(\kappa_x)\,.
\]
For our notation the quantization \eqref{quant} becomes
\[
Op(\sigma)\phi(x)=2^{-3}\pi^{-4} \int_{\lambda \neq 0} \int_{\mu \in \mathbb{R}}Tr (\pi_{\lambda,\mu}(x)\sigma(x,\lambda,\mu)\pi_{\lambda,\mu}(\phi))\,d\mu\,d\lambda\,,
\]
where for the Plancherel measure on $\mathcal{B}_4$, see Appendix \ref{sec.foo}, or \cite{Dix57}. Alternatively, by using the property of the Fourier transform 
\[
\hat{\phi}(\pi_{\lambda,\mu})\pi_{\lambda,\mu}(x)=\mathcal{F}_{\mathcal{B}_4}(\phi(x\cdot))(\pi_{\lambda,\mu})\,,
\]
and the properties of the trace we arrive at the equivalent formula
\[
Op(\sigma)\phi(x)=2^{-3}\pi^{-4}\int_{\lambda \neq 0}\int_{\mu \in \mathbb{R}}Tr(Op(a_{\kappa_x,\lambda,\mu})Op(a_{\phi(x\cdot),\lambda,\mu}))\,d\mu\,d\lambda\,,
\]
in terms of composition of quantization of symbols in the Euclidean space. 
\subsection{The Cartan group}
\label{Cartan}
Let $\mathfrak{l}_5=\text{span}\{I_1,I_2,I_3,I_4,I_5\}$ be the 3-step nilpotent Lie algebra, whose generators satisfy the non-zero relations:
\[
[I_1,I_2]=I_3\,, [I_1,I_3]=I_4\,, [I_2,I_3]=I_5\,.
\]
Thus, $\mathfrak{l}_5$ is graded, as it admits a vector space decomposition of the form
 \begin{equation}\label{decomp,cart}
\mathfrak{l}_5=V_1\oplus V_2 \oplus V_3
\end{equation}
where\[
V_1=\text{span}\{I_1,I_2\}\,, V_2=\text{span}\{I_3\}\,, \textrm{and}\,,V_3=\text{span}\{I_4,I_5\}\,,
\]
such that $[V_i,V_j] \subset V_{i+j}$.
Now, observe that $\mathfrak{l}_5$ is stratified, since $V_1$ generates all of $\mathfrak{l}_5$. The corresponding Lie group, called the \textit{Cartan group} and denoted by $\mathcal{B}_5$ is a homogeneous Lie group, and the natural dilations on $\mathfrak{l}_5$ are given by 
\begin{equation*}
D_r(I_1)=rI_1\,, D_r(I_2)=rI_2\,,D_r(I_3)=r^2 I_3\,,D_r(I_4)=r^3I_4\,, \textrm{and}\, D_r(I_4)=r^3 I_5\,, \quad r>0\,.
\end{equation*}
We identify $\mathcal{B}_5$ with the manifold $\mathbb{R}^5$ endowed with the group law 
\begin{equation*}
\begin{split}
 &(x_1,x_2,x_3,x_4,x_5) \times (y_1,y_2,y_3,y_4,y_5):=\\
 &  \left(x_1+y_1,x_2+y_2,x_3+y_3-x_1y_2,x_4+y_4+\frac{x_1^2y_2}{2}-x_1y_3, x_5+y_5 + \frac{x_1y_2^2}{2}-x_2y_3+x_1x_2y_2\right).       
\end{split}
\end{equation*}

Therefore, $T_{e}\mathcal{B}_5 \simeq T_{0}\mathbb{R}^5$, where $e$ is the identity element of $\mathcal{B}_5$ and $0$ is the element $(0,0,0,0,0)$ in $\mathbb{R}^5$. The canonical basis of $\mathfrak{l}^5$, given by \cite[Section 3.3]{BGR10}, or by explicit calculations, consists of the canonical left invariant vector fields

\begin{equation}\label{left.inv.car}
\begin{split}
X_1(x)&=\frac{\partial}{\partial x_1}\,, \quad X_2(x)=\frac{\partial}{\partial x_2}-x_1 \frac{\partial}{\partial x_3} 
+ \frac{x_1^2}{2} \frac{\partial}{\partial x_4}+x_1x_2\frac{\partial}{\partial x_5}\,, \\
            X_3(x)&= \frac{\partial}{\partial x_3}-x_1 \frac{\partial}{\partial x_4}-x_2 \frac{\partial}{\partial x_5}\,,\quad  X_4(x)= \frac{\partial}{\partial x_4}\,,\quad X_5(x)=\frac{\partial}{\partial x_5}\,,
\end{split}
\end{equation}
where $x=(x_1,x_2,x_3,x_4,x_5) \in \mathbb{R}^5$, that satisfy, as one expects, the above relations.

 If we choose the set $\{\tilde{X_1},\tilde{X}_2,\tilde{X}_3, \tilde{X}_4,\tilde{X}_5\}$ of right invariant vector fields as a basis of $\mathfrak{l}_5$, then the later acts on the space $C^{\infty}(\mathcal{B}_5)$ by
\begin{align}\label{dx1,right,cartan}
\tilde{X}_1(x)f(x)&=\frac{d}{dt}\Big|_{t=0}f \left( \left(t \cdot \text{exp}_{\mathcal{B}_5}\tilde{X}_1 \right) \times x \right)\nonumber\\
&=\frac{d}{dt}\Big|_{t=0} f \left( \left(t,0,0,0,0\right) \times \left(x_1,x_2,x_3,x_4,x_5 \right) \right)\nonumber\\
&=\frac{d}{dt}\Big|_{t=0} f(t+x_1,x_2,x_3-tx_2,x_4+\frac{1}{2}t^2x_2-tx_3,x_5+\frac{1}{2}tx_{2}^{2})\nonumber\\
&=\left( \frac{\partial}{\partial{x_1}}-x_{2} \frac{\partial}{\partial{x_3}}-x_3 \frac{\partial}{\partial{x_4}}+\frac{x_{2}^{2}}{2} \frac{\partial}{\partial{x_5}}\right) f(x)\,,
\end{align}
\begin{align*}\label{dx2,right,cartan}
\tilde{X}_2(x)f(x)&=\frac{d}{dt}\Big|_{t=0}f \left( \left(t \cdot \text{exp}_{\mathcal{B}_5}\tilde{X}_2 \right) \times x \right)\nonumber\\
&=\frac{d}{dt}\Big|_{t=0} f \left( \left(0,t,0,0,0\right) \times \left(x_1,x_2,x_3,x_4,x_5 \right) \right)\nonumber\\
&=\frac{d}{dt}\Big|_{t=0} f(x_1,x_2+t,x_3,x_4,x_5-tx_3)\nonumber\\
&=\left(\frac{\partial}{\partial{x_2}}-x_3 \frac{\partial}{\partial{x_5}}\right) f(x)\,,
\end{align*}
whereas similarly one gets,
\begin{equation}\label{dx345.right,cartan}
\tilde{X}_3(x)=\frac{\partial}{\partial x_3}\,,\quad \tilde{X}_4(x)=\frac{\partial}{\partial x_4}\,,\quad \text{and}\,\quad \tilde{X}_5(x)=\frac{\partial}{\partial x_5}\,.
\end{equation}
Note that the map $\text{exp}_{\mathcal{B}_5}$ is the natural diffeomorphism  from $\mathfrak{l}_5$ onto $\mathcal{B}_5$ as in \eqref{expB_4} for the case of $\mathcal{B}_4$, due to \cite[Proposition 1.6.6]{FR16}, yielding analogous results, see\eqref{integ.engel}, concerning the integration on $\mathcal{B}_5$. Finally, transporting the dilations above to the group side, we get
\[
D_r(x_1,x_2,x_3,x_4,x_5)=(rx_1,rx_2,r^2x_3,r^3x_4,r^3x_5)\,.
\] In this case the homogeneous dimension is $Q_{\mathcal{B}_5}=1+1+2+3+3=10$.
\begin{exa}\label{sub.Cartan}
The left-invariant sub-Laplacian on the Cartan group $\mathcal{B}_5$ given by 
\begin{align*}
\mathcal{L}_{\mathcal{B}_5}&= X_{1}^{2}+X_{2}^{2}\\
&=\frac{\partial^2}{\partial x_{1}^{2}}+ \left(\frac{\partial}{\partial x_2}-x_1 \frac{\partial}{\partial x_3} 
+ \frac{x_1^2}{2} \frac{\partial}{\partial x_4}+x_1x_2\frac{\partial}{\partial x_5}\right)^2\,,
\end{align*}
is an operator of homogeneous degree $2$. The positive Rockland operator is, reasoning as before, $\mathcal{R}_{\mathcal{B}_5}:=- \mathcal{L}_{\mathcal{B}_5}$.
\end{exa}

Dixmier in \cite[p.338]{Dix57} showed that,
\begin{prop}\label{Dixmier,cartan}
	The dual space of $\mathcal{B}_5$ is $\hat{\mathcal{B}_5}= \{\pi_{\lambda, \mu, \nu}\arrowvert \lambda^2+\mu^2\neq 0, \nu, \lambda, \mu \in \mathbb{R}\}$. In particular, for each $(x_1,x_2,x_3,x_4,x_5) \in \mathcal{B}_5$ $\,,\pi_{\lambda, \mu, \nu}(x_1,x_2,x_3,x_4,x_5) $ is acting on $L^2(\mathbb{R},\mathbb{C})$ via
\[
\pi_{\lambda, \mu, \nu}(x_1,x_2,x_3,x_4,x_5)h(u)\equiv \exp (i A^{\lambda, \mu, \nu}_{x_1,x_2,x_3,x_4,x_5}(u))h\left( u+\frac{\lambda x_1+\mu x_2}{\lambda^2+\mu^2} \right)\,,
\]
with 
\begin{align*}
A^{\lambda, \mu, \nu}_{x_1,x_2,x_3,x_4,x_5}(u)=& -\frac{1}{2} \frac{\nu}{\lambda^2 +\mu^2}(\mu x_1 -\lambda x_2)+\lambda x_4+ \mu x_5\\
& - \frac{1}{6} \frac{\mu}{\lambda^2 + \mu^2} (\lambda^2 x_1^3+3 \lambda \mu x_1^2 x_2 + 3 \mu^2 x_1 x_2^2 -\lambda \mu x_2^3)\\
& +\mu^2 x_1x_2 u +\lambda \mu (x_1^2-x_2^2) u +\frac{1}{2} (\lambda^2+\mu^2)(\mu x_1-\lambda x_2)u^2\,.
\end{align*}
\end{prop}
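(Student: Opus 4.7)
The plan is to apply Kirillov's orbit method, which for a connected, simply connected nilpotent Lie group gives a bijection between $\hat{G}$ and the coadjoint orbits in $\mathfrak{g}^{*}$. I would first identify $\mathfrak{l}_5^{*}$ with $\mathbb{R}^5$ through the dual basis $\{I_j^{*}\}$ and compute the infinitesimal coadjoint action from the relations $[I_1,I_2]=I_3$, $[I_1,I_3]=I_4$, $[I_2,I_3]=I_5$. Because $I_4,I_5$ span the center of $\mathfrak{l}_5$, the coordinates $\lambda:=\ell(I_4)$ and $\mu:=\ell(I_5)$ are $\mathrm{Ad}^{*}$-invariant on every orbit. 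In the generic case $\lambda^2+\mu^2\neq 0$, a dimension count for the radical $\mathfrak{g}(\ell)=\{X\in\mathfrak{l}_5:\ell([X,\mathfrak{l}_5])=0\}$ yields $\dim\mathfrak{g}(\ell)=3$, so the orbits are two-dimensional; a convenient third invariant $\nu$ is obtained by evaluating $\ell(I_3)$ (with an appropriate normalization) at the unique orbit point where $\ell(I_1)=\ell(I_2)=0$. This gives the parametrization $(\lambda,\mu,\nu)\in(\mathbb{R}^2\setminus\{0\})\times\mathbb{R}$ of the generic coadjoint orbits claimed in the statement.

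Next, for each such orbit I would choose the polarizing subalgebra
\[
\mathfrak{m}:=\mathrm{span}\{-\mu I_1+\lambda I_2,\, I_3,\, I_4,\, I_5\},
\]
which has codimension one. A direct check shows $\mathfrak{m}$ is indeed a subalgebra (the only non-central bracket $[-\mu I_1+\lambda I_2,I_3]=-\mu I_4+\lambda I_5$ lies in $\mathfrak{m}$) and is subordinate to the representative $\ell_{\lambda,\mu,\nu}$ since $\ell(-\mu I_4+\lambda I_5)=-\mu\lambda+\lambda\mu=0$. Setting $M=\exp\mathfrak{m}$ and $\chi_{\ell}(\exp Y):=e^{i\ell(Y)}$, Kirillov's theorem asserts that $\mathrm{Ind}_{M}^{\mathcal{B}_5}\chi_{\ell}$ is irreducible and that every element of $\hat{\mathcal{B}_5}$ arises from a unique coadjoint orbit. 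Realizing the induced representation on $L^{2}(\mathbb{R},du)$ via the transversal $u\mapsto\exp(u(\lambda I_1+\mu I_2))$ to $M$ in $\mathcal{B}_5$ is then expected to yield precisely the formula for $\pi_{\lambda,\mu,\nu}$ given in the statement.

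The concrete matching in the last step is the hardest part. For $x=(x_1,\ldots,x_5)\in\mathcal{B}_5$, I would expand the product $\exp(u(\lambda I_1+\mu I_2))\cdot x$ using the group law and re-decompose it as $\exp(u'(\lambda I_1+\mu I_2))\cdot m(x,u)$ with $m(x,u)\in M$. A short computation in the first two coordinates gives $u'=u+(\lambda x_1+\mu x_2)/(\lambda^2+\mu^2)$, where the denominator reflects the inversion of the projection $\mathbb{R}^2\to\mathbb{R}(\lambda,\mu)$ along its orthogonal complement; applying $\chi_{\ell}$ to the $M$-component then produces the phase $\exp(iA_{x_1,\ldots,x_5}^{\lambda,\mu,\nu}(u))$, with the cubic polynomial $\frac{1}{6}\frac{\mu}{\lambda^2+\mu^2}(\lambda^2 x_1^3+3\lambda\mu x_1^2 x_2+3\mu^2 x_1 x_2^2-\lambda\mu x_2^3)$ arising from the quadratic and cubic terms in the group law contributed by the $x_3,x_4,x_5$ components under this decomposition. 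The bookkeeping of cross-terms (e.g.\ distinguishing $\lambda\mu x_1^2 x_2$ from $\mu^2 x_1 x_2^2$) is tedious but routine; the remaining orbits with $\lambda^2+\mu^2=0$ form a lower-dimensional subvariety of Plancherel measure zero and are therefore omitted from the statement.
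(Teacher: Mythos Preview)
Your approach via Kirillov's orbit method is correct and is the standard modern route to this result; the polarization $\mathfrak{m}=\mathrm{span}\{-\mu I_1+\lambda I_2,I_3,I_4,I_5\}$ and the transversal $u\mapsto\exp(u(\lambda I_1+\mu I_2))$ are well chosen, and your computation of the shift $u'=u+(\lambda x_1+\mu x_2)/(\lambda^2+\mu^2)$ is right. The remaining phase computation is, as you say, routine bookkeeping.

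However, note that the paper does not actually prove this proposition: it is stated with attribution to Dixmier \cite[p.~338]{Dix57} and no argument is given. So there is no ``paper's own proof'' to compare against here. It is perhaps worth remarking that Dixmier's 1957 paper predates Kirillov's orbit method (1962), so his original derivation proceeds by a direct construction of the induced representations rather than via the coadjoint-orbit parametrization you use; your approach is the cleaner contemporary one and has the advantage of explaining \emph{why} the generic dual is parametrized by $(\lambda,\mu,\nu)$ with $\lambda^2+\mu^2\neq0$, rather than merely exhibiting the representations. One small point: you should be slightly more careful in asserting that $\hat{\mathcal{B}}_5$ \emph{equals} the family $\{\pi_{\lambda,\mu,\nu}\}$, since the non-generic orbits ($\lambda=\mu=0$) also contribute representations to the full unitary dual; what is really being claimed (and what suffices for the paper's purposes) is that these form the support of the Plancherel measure.
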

As before, the Hilbert space $L^2(\mathbb{R}, \mathbb{C})$ is endowed with the standard product.
Working as we did previously for the case of $\mathcal{B}_4$, the group Fourier transform at $\pi_{\lambda, \mu, \nu}$ acts on the elements $X_1,X_2$ of the canonical basis of $\mathcal{L}_5$ via
\begin{equation}\label{inf2,car}
\begin{split}
\pi_{\lambda, \mu, \nu}(X_1)&=\left(-\frac{i}{2} \frac{\nu \mu}{\lambda^2 + \mu^2} - \frac{i}{2} (\lambda^2 + \mu^2) \mu u^2 + \frac{\lambda}{\lambda^2 + \mu^2} \frac{d}{du} \right)\,,\\
\pi_{\lambda, \mu, \nu}(X_2)&=\left( \frac{i}{2} \frac{ \lambda \nu}{\lambda^2 + \mu^2}+\frac{i}{2} (\lambda^2 + \mu^2) \lambda u^2+\frac{\mu}{\lambda^2 + \mu^2} \frac{d}{du} \right)\,,
\end{split}
\end{equation}
see \cite[Subsection 3.3.2]{BGR10}, or after explicit calculations using \eqref{defn.infin}.

Further calculations show that
\begin{equation}
\label{inf3,4,5,car}
\pi_{\lambda, \mu, \nu}(X_3)=i (\lambda^2 + \mu^2)u\,,\quad \pi_{\lambda, \mu, \nu}(X_4)=i \lambda \\,\quad  \pi_{\lambda, \mu, \nu}(X_5)=i \mu\,.
\end{equation}
\begin{exa}
\label{inf,sub,eng}
The group Fourier transform at $\pi_{\lambda, \mu, \nu}$ of the canonical sub-Laplacian $\mathcal{L}_{\mathcal{B}_5}$ is the operator given by
	\begin{align*}
	\pi_{\lambda, \mu}(\mathcal{L}_{\mathcal{B}_5})=\pi_{\lambda, \mu}(X_1^2+X_2^2)&=\left(-\frac{i}{2} \frac{\nu \mu}{\lambda^2 + \mu^2} - \frac{i}{2} (\lambda^2 + \mu^2) \mu u^2 + \frac{\lambda}{\lambda^2 + \mu^2} \frac{d}{du} \right)^2\\&+ \left( \frac{i}{2} \frac{\lambda \nu}{\lambda^2 + \mu^2}+\frac{i}{2} (\lambda^2 + \mu^2) \lambda u^2+\frac{\mu}{\lambda^2 + \mu^2} \frac{d}{du} \right)^2\\
	&=\frac{1}{\lambda^2+ \mu^2} \frac{d^2}{du^2}-\frac{1}{4(\lambda^2 + \mu^2)}((\lambda^2 + \mu^2)^2 u^2 + \nu)^2\,,
	\end{align*}
and in particular, after suitable rescaling, the induced operator on $\mathcal{S}(\mathbb{R})$, takes the form of an anharmonic oscillator with terms $u^2$ and $u^4$. 
\end{exa}
Due to Proposition \ref{Dixmier,cartan} and \eqref{group,fourier,gen}, the group Fourier transform of a function $f \in L^{1}(\mathcal{B}_5)$ at $\pi_{\lambda, \mu, \nu} \in \hat{\mathcal{B}_5}$, is given by
\begin{equation}
\label{group-fourier-transform-cartan}
\mathcal{F}_{\mathcal{B}_5}(f)(\pi_{\lambda, \mu, \nu}) \equiv \hat{f}(\pi_{\lambda, \mu, \nu})\equiv \pi_{\lambda, \mu, \nu}(f):= \int_{\mathcal{B}_5} f(x) \pi_{\lambda, \mu, \nu}(x)^{*}dx\,,
\end{equation}
where $x=(x_1,x_2,x_3,x_4,x_5)\in \mathbb{R}^5$, is a linear endomorphism on $L^2(\mathbb{R})$, where, after calculations, the adjoint $\pi_{\lambda, \mu,\nu}(x)^{*}$ of the unitary operator $\pi_{\lambda,\mu,\nu}(x)$ is acting on $h \in L^2(\mathbb{R})$ via 
\begin{equation*}
\label{adjoin_pi,cartan}
\pi_{\lambda, \mu, \nu}(x)^{*}h(u)=\exp\left\lbrace i \left( A_{x_1,x_2,x_3,x_4,x_5}^{\lambda, \mu, \nu}\right)(u)^{*}\right\rbrace h\left(u-\frac{\lambda x_1 +\mu x_2}{\lambda^2+\mu^2} \right)\,,
\end{equation*}
with 
\begin{align*}
\left( A_{x_1,x_2,x_3,x_4,x_5}^{\lambda, \mu, \nu}\right)(u)^{*}&=\frac{1}{2}\frac{\nu}{\lambda^2+\mu^2}(\mu x_1 - \lambda x_2 )-\lambda x_4 -\mu x_5\nonumber \\
 &+ \frac{\mu}{6(\lambda^2+\mu^2)}\left(\lambda^2 x_1^3 + 3 \lambda \mu x_1^2 x_2+ 3 \mu^2 x_1 x_2^2 - \lambda \mu x_2^3\right)\nonumber\\
 &-\mu^{2}x_1 x_2 \left(u - \frac{\lambda x_1 + \mu x_2}{\lambda^2 + \mu^2}  \right)- \lambda \mu (x_{1}^{2}-x_{2}^{2}) \left( u - \frac{\lambda x_1 + \mu x_2}{\lambda^2+\mu^2} \right)\nonumber\\
 & - \frac{1}{2} (\lambda^2 +\mu^2) (\mu x_1-\lambda x_2) \left(u - \frac{\lambda x_1+ \mu x_2}{\lambda^2 + \mu^2} \right)^{2}\,.
\end{align*}
We note that, unlike the case of the Engel group $\mathcal{B}_4$, the group Fourier transform in this case of the Cartan group $\mathcal{B}_5$, cannot be entirely expressed via a pseudo-differential form. 
\section{On the Construction of $\Psi^{m}_{\rho,\delta}$}
\label{SEC:Dif}
For the construction of the classes $S^{m}_{\rho, \delta}(\G)$ of symbols for any graded Lie group $\G$, the main obstacle one has to overcome is to find explicit expression for the difference operators in each particular setting; the Engel and the Cartan group. \\
In particular, we find the expressions for the difference operators of the form $\Delta_{x_i}$ in our settings. Difference operators of this form have already been given in the graded case for the Heisenberg group $\mathbb{H}_n$ in \cite[Sections 6.3.1, 6.3.2]{FR16}. To justify our choice of difference operators, notice that in Definition \ref{Psi.class} we could have used instead the difference operators $\Delta_{x^{\alpha}}$, where $x^{\alpha}$ is defined by 
\[
x^{\alpha}:=x_{1}^{\alpha_1}\cdots x_{n}^{\alpha_n}\,,
\]
for $\alpha=(\alpha_1, \cdots, \alpha_n) \in \mathbb{N}_{0}^{n}$, see the discussion that follows after \cite[Remark 5.2.13]{FR16}. Additionally, \cite[Proposition 5.2.10]{FR16} yields
\[
\Delta_{x^{\alpha}}= \left(\Delta_{x_1} \right)^{\alpha_1} \cdots \left( \Delta_{x_n} \right)^{\alpha_n}\,.
\]
Thus, knowing explicitly the difference operators of the form $\Delta_{x_i}$, we get sufficient conditions for an operator on $\mathcal{S}(\G)$ to belong 
to the class of operators $\cup_{ m \in \mathbb{R}}\Psi^{m}_{\rho,\delta}(\G)$, when $\G$ is either the Engel group $\mathcal{B}_4$ or the Cartan group $\mathcal{B}_5$. For the moment, let us give some examples of operators that belong to the classes of operators $\cup_{ m \in \mathbb{R}}\Psi^{m}_{\rho,\delta}(\G)$ as well as some counter examples.

Note that \cite[Remark 5.2.18]{FR16} implies that any left-invariant differential operator belongs to $\cup_{m \in \mathbb{R}}\Psi_{\rho, \delta}^{m}(\G)$, for any graded Lie group $\G$. However, as the next examples verifies, this in not the case for every right-invariant operators on the Engel group $\mathcal{B}_4$. Similarly, one can construct right invariant vector fields that do not belong to any $\Psi^{m}$ for the case of the Cartan group $\mathcal{B}_5$.
\begin{exa}[The right invariant vector fields are not necessarily in some $\Psi^{m}$]
By \cite[Corollary 5.7.2]{FR16}, if $A \in \Psi^{m}$ is an operator on $\G$ of homogeneous degree $\nu_{A}$, then 
\begin{equation}
\label{right,inv,counterexample}
\forall s \in \mathbb{R} \quad \exists C>0 \quad \forall f \in \mathcal{S}(\G) \quad ||Af||_{L^{2}_{s-m}} \leq C ||f||_{L^{2}_{s}}\,,
\end{equation}
for $m \geq \nu_{A}$. Consider now the case of the Engel group $\mathcal{B}_4$, and fix the canonical base $\{X_1,X_2,X_3,X_4\}$, with $X_i$'s as in \eqref{left.inv.eng}, of its Lie algebra. Recall that $X_1=\partial_{x_1}$, $\tilde{X}_1=\partial_{x_1}-x_2\partial_{x_3}-x_3 \partial_{x_4}
$, and thus $X_1-\tilde{X}_1=x_2\partial_{x_3}+x_3 \partial_{x_4}$. Using \cite[Corollary 4.13]{Fol75} for the particular case of $\mathcal{B}_4$ we have,
\begin{equation}\label{Folland, right inv}
L^{2}_{1}(\mathcal{B}_4):= \{ f \in L^2(\mathcal{B}_4) : X_1f,X_2f \in L^{2}(\mathcal{B}_4)\}\,.
\end{equation}
Let $\phi,\chi \in C^{\infty}(\mathbb{R})$, such that $\phi, \phi^{'} \in L^2(\mathbb{R})$, and $\chi: \mathbb{R} \rightarrow [0,1]$ be compactly supported. Then, define $f \in C^{\infty}(\mathcal{B}_{4})$ via
\[
f(x_1,x_2,x_3,x_4):=\chi(x_1) \chi(x_3) \chi(x_4) \phi(x_2)\,.
\]
Then, it can be easily checked that $f,X_1 f$ and $X_2 f$ are square integrable, and so, using \eqref{Folland, right inv}, $f \in L_{1}^{2}(\mathcal{B}_4)$. However, since $x_2 \partial_{x_3}f \notin L^2(\mathcal{B}_4) $, then also $(X_1-\tilde{X}_1)f \notin L^2(\mathcal{B}_4) $, and in particular $\tilde{X}_1 f \notin L^2(\mathcal{B}_4)$. 
Assume now that $\tilde{X}_1$ is in some $\Psi^{m}\,, m \geq 1$. Then, an application of \eqref{right,inv,counterexample} yields that $\tilde{X}_1$ maps $L^{2}_{1}(\mathcal{B}_4)$ to some $L^{2}_{1-m}(\mathcal{B}_4)$, $m \geq 1$, and hence to $L^2(\mathcal{B}_4)$, continuously. Thus, the chosen $f$ shows that the right invariant operator $\tilde{X}_{1}$ is not in $\cup_{ m \in \mathbb{R}}\Psi^{m}_{\rho,\delta}(\mathcal{B}_4)$.
\end{exa}

Finally, we note that in the process of finding the formulas for the difference operators in the subsections that follow instead of making direct calculations, we adopt other, more efficient, techniques. In some of those, we make use of the subsequent properties without mentioning:
\begin{itemize}
\item The linearity of the group Fourier transform.
\item For $X$ and $\tilde{X}$ being a left and a right invariant vector field, respectively, in the Lie algebra of each group, and for a distribution $\kappa$ we have
\[
\pi(X\kappa)=\pi(X)\pi(\kappa)\,, \quad \textrm{and}\quad \pi(\tilde{X}\kappa)=\pi(\kappa) \pi(X)\,,
\]
see \cite[Proposition 1.7.6(iv)]{FR16}.
\end{itemize}
\subsection{The case of the Engel group}
The difference operators on a graded Lie group, have been defined as acting on the space of tempered distributions $\mathcal{K}_{a,b}$, see Definition \ref{diff.op1}. Thus, it makes sense to clarify this notion in the particular setting of $\mathcal{B}_4$:

Let $\kappa$ be in some $\mathcal{K}_{a,b}(\mathcal{B}_4)$, and let $\pi_{\lambda, \mu} \in \hat{\mathcal{B}_4}$, where $\hat{\mathcal{B}_4}$ is as in Proposition \ref{Dixmier, Engel}. Then, the group Fourier transform of $\kappa$ is by Definition \ref{group,fourier,gen}, and by Remark \ref{smoothvectors}, the field of operators 
\[
\{ \pi_{\lambda, \mu}(\kappa) : \mathcal{S}(\mathbb{R}) \rightarrow \mathcal{H}_{\pi_{\lambda, \mu}}^{b}\,,\pi_{\lambda,\mu} \in \hat{\mathcal{B}_4} \}\,,
\]
in $L^{\infty}_{a,b}(\hat{\mathcal{B}_4})$, where $\pi_{\lambda,\mu}(\kappa)$ acts on $\mathcal{S}(\mathbb{R})$ as in \eqref{group-fourier-tranform-engel}.

\begin{prop}\label{dx1,dx2,eng}
	 Let $\kappa \in \mathcal{K}_{a,b}(\mathcal{B}_4)$ be a tempered distribution, such that $x_1 \kappa$ is in some $\mathcal{K}_{a^{'},b^{'}}(\mathcal{B}_4)$, or $x_2 \kappa$ is in some $\mathcal{K}_{a^{'},b^{'}}(\mathcal{B}_4)$, so that the difference operator $\Delta_{x_1}$ or $\Delta_{x_2}$, respectively, make sense. Then the formulas for the above difference operators, acting on the space $\mathcal{S}(\mathbb{R})$, are given by: 
	 \[
	 \Delta_{x_1} \hat{\kappa}(\pi_{\lambda, \mu})= \frac{i}{ \lambda} \left(\pi_{\lambda,\mu}(X_3)\pi_{\lambda,\mu}(\kappa)-\pi_{\lambda, \mu}(\kappa)\pi_{\lambda, \mu}(X_3)\right)\,,
	 \]
	 where $\pi_{\lambda, \mu}(X_3)=-i \lambda u$, and
	 \[
	 \Delta_{x_2} \hat{\kappa}(\pi_{\lambda, \mu})= \frac{2 \lambda}{i} \partial_{\mu}\pi_{\lambda, \mu}(\kappa)\,.
	 \]
\end{prop}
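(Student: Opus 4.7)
The plan is to treat the two difference operators by rather different routes: the first via a commutator identity coming from the mismatch between a left-invariant and the corresponding right-invariant vector field, the second by direct differentiation of the adjoint representation under the integral sign.

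For $\Delta_{x_1}$, the starting observation is that, from \eqref{left.inv.eng} and \eqref{dx1,right,eng} specialised to the third coordinate, one has $X_3 - \tilde{X}_3 = -x_1\,\partial_{x_4} = -x_1 X_4$, and since $X_4 = \partial_{x_4}$ is both left and right invariant, it commutes with multiplication by $x_1$, so that
\[
(X_3 - \tilde{X}_3)\kappa \;=\; -X_4(x_1\kappa).
\]
I would then apply the identity $\pi(X\kappa) = \pi(X)\pi(\kappa)$ and $\pi(\tilde{X}\kappa) = \pi(\kappa)\pi(X)$ recalled at the start of the section to the left-hand side, and on the right-hand side use that $\pi_{\lambda,\mu}(X_4) = i\lambda$ acts as a scalar, giving
\[
[\pi_{\lambda,\mu}(X_3),\pi_{\lambda,\mu}(\kappa)] \;=\; -i\lambda\,\pi_{\lambda,\mu}(x_1\kappa) \;=\; -i\lambda\,\Delta_{x_1}\hat{\kappa}(\pi_{\lambda,\mu}).
\]
Dividing by $-i\lambda$ (permitted since $\lambda\neq 0$ on $\hat{\mathcal{B}}_4$) yields the stated formula, with $\pi_{\lambda,\mu}(X_3) = -i\lambda u$ recalled from \eqref{inf3,4,5,eng}.

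For $\Delta_{x_2}$, I would exploit the explicit form of the adjoint \eqref{adjoint_pi,engel}. Inspecting the phase, the only term involving $\mu$ is $\frac{\mu}{2\lambda}x_2$, so that
\[
\partial_{\mu}\bigl(\pi_{\lambda,\mu}(x)^{*}h(u)\bigr) \;=\; \frac{i x_2}{2\lambda}\,\pi_{\lambda,\mu}(x)^{*}h(u).
\]
Differentiating \eqref{group-fourier-tranform-engel} under the integral sign (justified by the Schwartz-class setting on $\mathcal{H}_\pi^\infty = \mathcal{S}(\mathbb{R})$, cf.\ Remark \ref{smoothvectors}, together with the hypothesis $x_2\kappa \in \mathcal{K}_{a',b'}(\mathcal{B}_4)$ which makes $\pi_{\lambda,\mu}(x_2\kappa)$ well-defined) gives
\[
\partial_{\mu}\pi_{\lambda,\mu}(\kappa) \;=\; \frac{i}{2\lambda}\,\pi_{\lambda,\mu}(x_2\kappa) \;=\; \frac{i}{2\lambda}\,\Delta_{x_2}\hat{\kappa}(\pi_{\lambda,\mu}),
\]
and solving for $\Delta_{x_2}\hat{\kappa}$ yields the claimed identity.

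The main obstacle I foresee is less computational than structural: one must ensure that the formal identities above are legitimate as identities of operators acting on $\mathcal{S}(\mathbb{R})$, i.e.\ that the commutator $[\pi_{\lambda,\mu}(X_3),\pi_{\lambda,\mu}(\kappa)]$ and the $\mu$-derivative $\partial_{\mu}\pi_{\lambda,\mu}(\kappa)$ are well-defined on smooth vectors. Both points follow from the assumption $\kappa, x_i\kappa \in \mathcal{K}_{\cdot,\cdot}(\mathcal{B}_4)$ together with Definition \ref{four.transf.}, so once this is spelled out the remaining work is the two short manipulations above. The use of $X_4 = \tilde{X}_4$ being central (in the relevant sense) is what makes the $\Delta_{x_1}$ computation clean; for more general coordinate functions in $\mathcal{B}_4$ one would have to iterate this trick, but here it suffices in one step.
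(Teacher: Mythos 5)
Your proof is correct and follows essentially the same route as the paper: the $\Delta_{x_1}$ formula via the identity $\tilde{X}_3-X_3=x_1\partial_{x_4}$ combined with $\pi_{\lambda,\mu}(X_4)=i\lambda$ acting as a scalar, and the $\Delta_{x_2}$ formula by differentiating the group Fourier transform under the integral sign with respect to $\mu$. The only cosmetic discrepancy is the citation for $\tilde{X}_3=\partial_{x_3}$, which comes from the displayed formula following \eqref{dx1,right,eng} rather than from \eqref{dx1,right,eng} itself; the computation is unaffected.
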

\begin{proof}
	Observe that by \eqref{left.inv.eng} and \eqref{dx34.right,engel} we get 
	\begin{equation}\label{observe1}
	\tilde{X}_3-X_3= x_1 \frac{\partial}{\partial x_4}=\frac{\partial}{\partial{x_4}}x_1\,.
	\end{equation}
	Observe also that for any suitable $\kappa$, using \eqref{inf3,4,5,eng} we have
	\begin{equation}\label{forX4}
	\pi_{\lambda, \mu}\left(\frac{\partial}{\partial x_4} \kappa\right)=\pi_{\lambda, \mu}( X_4 \kappa)=\pi_{\lambda, \mu}(X_4)\pi_{\lambda, \mu}(\kappa)=i \lambda \pi_{\lambda, \mu}(\kappa)\,,
	\end{equation}
	since $X_4=\frac{\partial}{\partial_{x_4}}$. Then, by the above,
	\begin{align*}
	\pi_{\lambda, \mu}(x_1 \kappa) &= \frac{1}{i \lambda} \pi_{\lambda, \mu} ( X_4 x_1 \kappa)=\frac{1}{i \lambda} \pi_{\lambda, \mu}((\tilde{X}_3 -X_3)\kappa)\\
	&= \frac{1}{i \lambda} (\pi_{\lambda, \mu}(\tilde{X}_3 \kappa)-\pi_{\lambda, \mu}(X_3 \kappa))\\
	&= \frac{1}{i \lambda} (\pi_{\lambda, \mu}(\kappa)\pi_{\lambda, \mu}(X_3)-\pi_{\lambda, \mu}(X_3)\pi_{\lambda, \mu}(\kappa))\\
	&= \frac{i}{ \lambda} (\pi_{\lambda,\mu}(X_3)\pi_{\lambda,\mu}(\kappa)-\pi_{\lambda, \mu}(\kappa)\pi_{\lambda, \mu}(X_3))\,.	
	\end{align*}
	Now, for the difference operator corresponding to $x_2$, we differentiate the group Fourier transform of $\kappa$ given in \eqref{group-fourier-tranform-engel}, at $h \in \mathcal{S}(\mathbb{R})$ with respect to $\mu$ and get
	
	\begin{align*}
	\partial_{\mu} \{ \pi_{\lambda, \mu}(\kappa)h(u)\}&=\partial_{\mu} \Big\{\int_{\mathbb{R}^4}\kappa(x)\exp\left(i\left(\frac{\mu}{2\lambda}x_2-\lambda x_4 \right)\right)\\
	& \cdot \exp\left(i\left(\lambda x_3 (u-x_1)-\frac{\lambda}{2}x_2 (u-x_1)^2\right)\right)h(u-x_1)dx \Big\}\\
	&=\int_{\mathbb{R}^4}\kappa(x)\exp\left(i\left(\frac{\mu}{2\lambda}x_2-\lambda x_4 \right)\right)\\
	& \cdot \exp\left(i\left(\lambda x_3 (u-x_1)-\frac{\lambda}{2}x_2 (u-x_1)^2\right)\right)h(u-x_1)\left( \frac{i}{2\lambda} x_2 \right)dx\,,
		\end{align*} 
		or in terms of difference operators,
		\[
			\partial_{\mu} \pi_{\lambda, \mu}(\kappa)=\pi_{\lambda, \mu} \left(\frac{i}{2\lambda}x_2 \kappa \right)=\frac{i}{2 \lambda} \Delta_{x_2}\pi_{\lambda, \mu}(\kappa)\,.
		\]
		The proof is complete.
	\end{proof}
	\begin{rem}
\label{nonlocal}
As we mentioned earlier, see Remark \ref{dif.nonlacal}, the difference operators are not necessarily local, in the sense that, as the last Proposition shows, they might for example be expressed by using derivatives in $\mu \in \mathbb{R}$, and therefore they should act on the field of operators $\{ \pi_{\lambda, \mu}(\kappa), \lambda \neq 0, \mu \in \mathbb{R}\}$. This is also the case for the difference operators $\Delta_{x_3|\pi_{\lambda, \mu}}$ and  $\Delta_{x_4|\pi_{\lambda, \mu}}$ in the setting of $\mathcal{B}_4$, see Propositions \ref{dx3,engel} and \ref{dx4,eng}, whereas in the setting of $\mathcal{B}_5$ any difference operator of the form $\Delta_{x_i|\pi_{\lambda, \mu, \nu}}$,$i=1,\cdots,4$ is not local, see Propositions \ref{dx1dx2,cartan}, \ref{dx3,car}, \ref{dx4,cartan} and \ref{dx5,cartan}. 
\end{rem}
	\begin{prop}
	\label{dx3,engel}
	Let $\kappa \in \mathcal{K}_{a,b}(\mathcal{B}_4)$ be such that $x_3 \kappa$ is also in some $\mathcal{K}_{a^{'},b^{'}}(\mathcal{B}_4)$. Then the difference operator that corresponds to $x_3$ is given by
	\[
	\Delta_{x_3} \hat{\kappa}(\pi_{\lambda, \mu})= \frac{i}{ \lambda} \left(\Delta_{x_2}\pi_{\lambda, \mu}(\kappa)\pi_{\lambda, \mu}(X_3)+\pi_{\lambda, \mu}(\kappa) \pi_{\lambda, \mu}(X_1)-\pi_{\lambda, \mu}(X_1)\pi_{\lambda,\mu}(\kappa) \right)\,,
	\]
	where $\Delta_{x_2|\pi_{\lambda, \mu}}$ is given in Proposition \ref{dx1,dx2,eng}, and $\pi_{\lambda, \mu}(X_1), \pi_{\lambda, \mu}(X_3)$, are as in \eqref{inf1,2,eng} and \eqref{inf3,4,5,eng}.
	\end{prop}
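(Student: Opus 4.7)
The plan is to mimic the strategy used for $\Delta_{x_1}$ in Proposition \ref{dx1,dx2,eng}: write the multiplication operator by $x_{3}$ as a suitable combination of left- and right-invariant vector fields applied to $\kappa$, and then take the group Fourier transform so that the identities $\pi(X\kappa)=\pi(X)\pi(\kappa)$ and $\pi(\tilde X \kappa)=\pi(\kappa)\pi(X)$ can be invoked.

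First, since $X_{4}=\partial_{x_{4}}$ commutes with multiplication by $x_{3}$, and $\pi_{\lambda,\mu}(X_{4})=i\lambda$ by \eqref{inf3,4,5,eng} is a scalar, the same trick as in \eqref{forX4} gives
\[
\pi_{\lambda,\mu}(x_{3}\kappa)=\frac{1}{i\lambda}\pi_{\lambda,\mu}\bigl(X_{4}(x_{3}\kappa)\bigr)=\frac{1}{i\lambda}\pi_{\lambda,\mu}\bigl(x_{3}\partial_{x_{4}}\kappa\bigr).
\]
So it suffices to express $x_{3}\partial_{x_{4}}$ using operators whose group Fourier transform is already known.

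The key algebraic observation, analogous to \eqref{observe1}, comes from comparing \eqref{left.inv.eng} and \eqref{dx1,right,eng}:
\[
\tilde X_{1}-X_{1}=-x_{2}\partial_{x_{3}}-x_{3}\partial_{x_{4}}=-x_{2}\tilde X_{3}-x_{3}\partial_{x_{4}},
\]
since $\tilde X_{3}=\partial_{x_{3}}$ by \eqref{dx34.right,engel}. Solving for $x_{3}\partial_{x_{4}}$ gives
\[
x_{3}\partial_{x_{4}}\kappa=X_{1}\kappa-\tilde X_{1}\kappa-x_{2}\tilde X_{3}\kappa.
\]

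Now I apply $\pi_{\lambda,\mu}$ to both sides. The first two terms produce $\pi_{\lambda,\mu}(X_{1})\pi_{\lambda,\mu}(\kappa)-\pi_{\lambda,\mu}(\kappa)\pi_{\lambda,\mu}(X_{1})$ by the left/right invariance identity recalled at the start of Section \ref{SEC:Dif}. For the third term, since $x_{2}$ and $\tilde X_{3}=\partial_{x_{3}}$ commute, I rewrite
\[
\pi_{\lambda,\mu}(x_{2}\tilde X_{3}\kappa)=\pi_{\lambda,\mu}\bigl(\tilde X_{3}(x_{2}\kappa)\bigr)=\pi_{\lambda,\mu}(x_{2}\kappa)\,\pi_{\lambda,\mu}(X_{3})=\Delta_{x_{2}}\pi_{\lambda,\mu}(\kappa)\,\pi_{\lambda,\mu}(X_{3}),
\]
using Definition \ref{diff.op1} and the formula for $\Delta_{x_{2}}$ from Proposition \ref{dx1,dx2,eng}. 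Dividing by $i\lambda$ and collecting signs yields precisely the claimed formula.

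The main (minor) obstacle is spotting the right combination of vector fields that isolates $x_{3}\partial_{x_{4}}$ with a scalar Fourier symbol while keeping the remaining multiplications by $x_{1},x_{2}$ either absent or already reducible through the lower-order difference operator $\Delta_{x_{2}}$. Once the identity $x_{3}\partial_{x_{4}}=X_{1}-\tilde X_{1}-x_{2}\tilde X_{3}$ is in hand, the derivation is purely bookkeeping, and one should verify tacitly that the tempered-distribution assumptions $\kappa,x_{3}\kappa\in\mathcal{K}_{\cdot,\cdot}(\mathcal{B}_{4})$ guarantee that each intermediate field of operators is well-defined on $\mathcal{S}(\mathbb{R})$ in the sense of Definition \ref{four.transf.}.
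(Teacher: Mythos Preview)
Your proof is correct and follows essentially the same approach as the paper: both hinge on the identity $x_{3}\partial_{x_{4}}=X_{1}-\tilde X_{1}-x_{2}\tilde X_{3}$ (obtained by comparing the left- and right-invariant vector fields), then apply $\pi_{\lambda,\mu}$ and use $\pi_{\lambda,\mu}(X_{4})=i\lambda$ together with the left/right Fourier identities to reduce to $\Delta_{x_{2}}$. The paper's argument is identical up to presentation.
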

\begin{proof}
Formulas \eqref{left.inv.eng}, \eqref{dx1,right,eng}, and \eqref{dx34.right,engel} yield
\[
X_1-\tilde{X}_1-x_2 \tilde{X}_3= x_3\frac{\partial}{\partial{x_4}}=\frac{\partial}{\partial{x_4}} x_3\,.
\]
Let $\kappa$ be a suitable distribution. Then, using \eqref{forX4}, and as $\frac{\partial}{\partial{x_4}}=X_4$, we have
\begin{align*}
\pi_{\lambda, \mu}(x_3 \kappa)&= \frac{1}{i \lambda}\pi_{\lambda, \mu}(X_4 x_3 \kappa)= \frac{1}{i \lambda}\pi_{\lambda, \mu}\left((X_1-\tilde{X}_1-x_2 \tilde{X}_3)\kappa \right)\\
&=\frac{1}{i \lambda}\left(\pi_{\lambda, \mu}(X_1 \kappa)-\pi_{\lambda, \mu}(\tilde{X}_1 \kappa)-\pi_{\lambda, \mu}(x_2 \tilde{X}_3 \kappa)\right)\\
&=\frac{1}{i \lambda}\left(\pi_{\lambda, \mu}(X_1)\pi_{\lambda, \mu}(\kappa)-\pi_{\lambda, \mu}(\kappa)\pi_{\lambda, \mu}(X_1)-\pi_{\lambda, \mu}(x_2\kappa)\pi_{\lambda, \mu}(X_3)\right)\\
&= \frac{1}{i \lambda} \left(\pi_{\lambda, \mu}(X_1) \pi_{\lambda, \mu}(\kappa)- \pi_{\lambda, \mu}(\kappa) \pi_{\lambda, \mu}(X_1)-\Delta_{x_2}\pi_{\lambda, \mu}(\kappa)\pi_{\lambda, \mu}(X_3) \right)\\
&= \frac{i}{ \lambda} \left(\Delta_{x_2}\pi_{\lambda, \mu}(\kappa)\pi_{\lambda, \mu}(X_3)+\pi_{\lambda, \mu}(\kappa) \pi_{\lambda, \mu}(X_1)-\pi_{\lambda, \mu}(X_1)\pi_{\lambda,\mu}(\kappa) \right)
\,,
\end{align*}
completing the proof.
\end{proof}\begin{prop}\label{dx4,eng}
	Let $\kappa \in \mathcal{K}_{a,b}(\mathcal{B}_4)$ be such that $x_4 \kappa$ is also in some $\mathcal{K}_{a^{'},b^{'}}(\mathcal{B}_4)$. Then, for all $h \in \mathcal{S}(\mathbb{R})$ we have
	\begin{align*}
	(\Delta_{x_4} \pi_{\lambda, \mu}(\kappa))h(u)&= i\partial_{\lambda}\{\pi_{\lambda, \mu}(\kappa)h(u)\} -\left(\frac{\mu}{2 \lambda^2}+\frac{u^2}{2} \right)\Big \{\Delta_{x_2}\pi_{\lambda, \mu}(\kappa)h(u) \Big\}\\
	&+u\Big \{ \Delta_{x_3}\pi_{\lambda, \mu}(\kappa)h(u)\Big \}-\Big \{ \Delta_{x_3}\Delta_{x_1}\pi_{\lambda, \mu}(\kappa)h(u)\Big\}\\
	&+u\Big \{ \Delta_{x_2}\Delta_{x_1}\pi_{\lambda, \mu}(\kappa)h(u) \Big \}-\frac{1}{2}\Big\{\Delta_{x_2}\Delta_{x_1}^{2}\pi_{\lambda, \mu}(\kappa)h(u)\Big\}\,,
	\end{align*}
	where the difference operators $\Delta_{x_i|\pi_{\lambda, \mu}}$, $i=1,2,3$, are given in Propositions \ref{dx1,dx2,eng} and \ref{dx3,engel}.
\end{prop}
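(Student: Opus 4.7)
The plan is to obtain the formula by differentiating the explicit Fourier-transform expression \eqref{group-fourier-tranform-engel} with respect to the parameter $\lambda$, since $\lambda$ is the only parameter that interacts with the factor $x_4$ in the phase. Observe that in the integrand of $\pi_{\lambda,\mu}(\kappa)h(u)$, the variable $x_4$ appears solely through the exponent via the term $-i\lambda x_4$, so differentiating the phase with respect to $\lambda$ will produce a term $-i x_4$, exactly what we need to turn $\kappa$ into $x_4\kappa$ inside the integral; all remaining terms produced by the $\lambda$-derivative can be reinterpreted as other difference operators applied to $\pi_{\lambda,\mu}(\kappa)$.

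Concretely, the plan proceeds as follows. First, I differentiate under the integral sign the expression
\[
\pi_{\lambda, \mu}(\kappa)h(u) = \int_{\mathbb{R}^4}\kappa(x)\exp\!\left(i\Big(\tfrac{\mu}{2\lambda}x_2-\lambda x_4 + \lambda x_3(u-x_1)-\tfrac{\lambda}{2}x_2(u-x_1)^2\Big)\right)h(u-x_1)\,dx
\]
with respect to $\lambda$, producing
\[
\partial_\lambda\{\pi_{\lambda,\mu}(\kappa)h(u)\}=\pi_{\lambda,\mu}\!\left(i\Big(-\tfrac{\mu}{2\lambda^2}x_2 - x_4 + x_3(u-x_1) - \tfrac{1}{2}x_2(u-x_1)^2\Big)\kappa\right)\!h(u).
\]
Next, I expand $(u-x_1)$ and $(u-x_1)^2$ to split the integrand into terms where $u$ factors out (as it is a parameter, not an integration variable) and where the multiplier acting on $\kappa$ is a pure monomial in $x_1,x_2,x_3,x_4$. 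This gives seven terms containing, respectively, $x_2$, $x_4$, $x_3$, $x_1x_3$, $x_2$ (from $u^2$), $x_1x_2$ (from $-2ux_1$), and $x_1^2x_2$.

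Then, by Definition \ref{diff.op1} each such $\pi_{\lambda,\mu}(x^\alpha\kappa)$ equals the difference operator $\Delta_{x^\alpha}$ applied to $\hat{\kappa}(\pi_{\lambda,\mu})$, and by the product rule for difference operators \cite[Proposition 5.2.10]{FR16} one has $\Delta_{x_ix_j}=\Delta_{x_i}\Delta_{x_j}$. Rewriting each summand in this language and isolating the unique term that carries $x_4$, namely $-i\Delta_{x_4}\pi_{\lambda,\mu}(\kappa)$, I solve algebraically for $\Delta_{x_4}\pi_{\lambda,\mu}(\kappa)h(u)$ and multiply through by $-1/i=i$. Collecting the two contributions of $\Delta_{x_2}\pi_{\lambda,\mu}(\kappa)$ (from the $-\tfrac{\mu}{2\lambda^2}x_2$ term and the $-\tfrac{u^2}{2}x_2$ term) into a common coefficient $-\bigl(\tfrac{\mu}{2\lambda^2}+\tfrac{u^2}{2}\bigr)$ yields exactly the stated identity.

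The routine calculation is straightforward; the only delicate point, and the one place I would proceed carefully, is to make sure each coefficient in the expansion of $(u-x_1)^k$ is correctly allotted between a factor of $u$ that must stay outside $\pi_{\lambda,\mu}$ (acting as a multiplication operator on $h$) and a factor of $x_1$ that must be absorbed into an additional $\Delta_{x_1}$. Once this bookkeeping is done, the identification with the claimed right-hand side is immediate, and there is no additional analytic difficulty since $\kappa$, $x_4\kappa$, $x_i\kappa$, $x_ix_j\kappa$ all lie in suitable $\mathcal{K}_{a',b'}(\mathcal{B}_4)$ spaces by the hypothesis together with the product property of difference operators, justifying differentiation under the integral sign.
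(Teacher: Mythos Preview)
Your proposal is correct and follows essentially the same approach as the paper: differentiate the explicit integral expression for $\pi_{\lambda,\mu}(\kappa)h(u)$ with respect to $\lambda$, expand the resulting factor $i\bigl(-\tfrac{\mu}{2\lambda^2}x_2-x_4+x_3(u-x_1)-\tfrac{1}{2}x_2(u-x_1)^2\bigr)$ into monomials in the $x_i$, identify each piece as a difference operator applied to $\hat{\kappa}(\pi_{\lambda,\mu})$, and solve for $\Delta_{x_4}$. The paper carries out exactly this computation, so there is nothing to add.
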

\begin{proof}
	For $\kappa$ as in the statement and for $h \in \mathcal{S}(\mathbb{R})$, we differentiate with respect to $\lambda$ the group Fourier transform of $\kappa$, given in \eqref{group-fourier-tranform-engel}, at $h$. This yields,
	\begin{align*}
	\partial_{\lambda}\{\pi_{\lambda, \mu}(\kappa)h(u)\}&= \partial_{\lambda} \Big \{ \int_{\mathbb{R}^4} \kappa(x) \exp \left( i \left( \frac{\mu}{2 \lambda}x_2 -\lambda x_4\right) \right) \\
	&\cdot \exp \left(i \left(\lambda x_3(u-x_1)-\frac{\lambda}{2}x_2(u-x_1)^2 \right) \right)h(u-x_1)\,dx \Big\}\\
	&=\int_{\mathbb{R}^4}\kappa(x) \exp \left( i \left( \frac{\mu}{2 \lambda}x_2 -\lambda x_4+\lambda x_3(u-x_1)-\frac{\lambda}{2}x_2(u-x_1)^2 \right) \right)\\
	& h(u-x_1) \Big\{i\left(-\frac{\mu}{2 \lambda^2}x_2-x_4+x_3(u-x_1)-\frac{x_2}{2}(u-x_1)^2 \right) \Big\} dx\,.
	 	\end{align*}
	Rewriting the above formula in terms of difference operators we obtain
	\begin{align*}
	\partial_{\lambda}\{\pi_{\lambda, \mu}(\kappa)h(u)\}&= i \Big[ -\left(\frac{\mu}{2 \lambda^2}+\frac{u^2}{2} \right)\Big \{\Delta_{x_2}\pi_{\lambda, \mu}(\kappa)h(u) \Big\}-\Big \{\Delta_{x_4}\pi_{\lambda, \mu}(\kappa)h(u)\Big\}\\
	&+u\Big \{ \Delta_{x_3}\pi_{\lambda, \mu}(\kappa)h\Big \}(u)-\Big \{ \Delta_{x_3}\Delta_{x_1}\pi_{\lambda, \mu}(\kappa)h(u)\Big\}\\
	&+u\Big \{ \Delta_{x_2}\Delta_{x_1}\pi_{\lambda, \mu}(\kappa)h(u) \Big \}-\frac{1}{2}\Big\{\Delta_{x_2}\Delta_{x_1}^{2}\pi_{\lambda, \mu}(\kappa)h(u)\Big\}\Big]\,,
	\end{align*}
completing the proof.	
	\end{proof}
\subsection{The case of the Cartan group}
\label{Cartan}
Let $\kappa$ be in some $\mathcal{K}_{a,b}(\mathcal{B}_5)$, and let $\pi_{\lambda, \mu, \nu} \in \hat{\mathcal{B}_5}$, where $\hat{\mathcal{B}_5}$ is as in Proposition \ref{Dixmier,cartan}. Then, the group Fourier transform of $\kappa$ is by Definition \ref{group,fourier,gen}, and by Remark \ref{smoothvectors}, the field of operators 
\[
\{ \pi_{\lambda, \mu,\nu}(\kappa) : \mathcal{S}(\mathbb{R}) \rightarrow \mathcal{H}_{\pi_{\lambda, \mu, \nu}}^{b}\,, \pi_{\lambda, \mu, \nu} \in \hat{\mathcal{B}_5} \}\,,
\]
in $L_{a,b}^{\infty}(\hat{\mathcal{B}_5})$, where $\pi_{\lambda, \mu, \nu}(\kappa)$ acts on $\mathcal{S}(\mathbb{R})$ as in \eqref{group-fourier-transform-cartan}.

\begin{prop}\label{dx1dx2,cartan} Let $\kappa \in \mathcal{K}_{a,b}(\mathcal{B}_5)$ be a tempered distribution, such that $x_1 \kappa$ is in some $\mathcal{K}_{a^{'},b^{'}}(\mathcal{B}_5)$, or $x_2\kappa$ is in some $\mathcal{K}_{a^{'},b^{'}}(\mathcal{B}_5)$, so that the difference operator $\Delta_{x_1}$ or $\Delta_{x_2}$, respectively, make sense. Then, the above difference operators act on $h \in \mathcal{S}(\mathbb{R})$ via
\begin{align*}
(\Delta_{x_1} \pi_{\lambda, \mu, \nu} (\kappa))h(u)&=- 2 i\mu \frac{\partial}{\partial \nu}\{\pi_{\lambda, \mu, \nu}(\kappa)h(u)\}\nonumber\\
&+\frac{i \lambda}{(\lambda^2+\mu^2)}\left( \pi_{\lambda, \mu, \nu}(X_3)\pi_{\lambda, \mu, \nu}(\kappa)-\pi_{\lambda, \mu, \nu}(\kappa)\pi_{\lambda, \mu, \nu}(X_3) \right)h(u)\,,
\end{align*}
and 
\begin{align*}
(\Delta_{x_2} \pi_{\lambda, \mu, \nu} (\kappa))h(u)&=2 i\lambda  \frac{\partial}{\partial \nu}\{\pi_{\lambda, \mu, \nu}(\kappa)h(u)\}\nonumber\\
&+i\frac{\mu}{\lambda^2+\mu^2} \left(\pi_{\lambda,\mu,\nu}(X_3) \pi_{\lambda, \mu, \nu}(\kappa)-\pi_{\lambda, \mu, \nu}(\kappa)\pi_{\lambda, \mu, \nu}(X_3) \right)h(u)\,,
\end{align*}
where $\pi_{\lambda,\mu,\nu}(X_3)$ is given by \eqref{inf3,4,5,car}.
\end{prop}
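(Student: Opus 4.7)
The plan is to derive two independent linear relations between $\Delta_{x_1}\pi_{\lambda,\mu,\nu}(\kappa)$ and $\Delta_{x_2}\pi_{\lambda,\mu,\nu}(\kappa)$, then solve the resulting $2\times2$ system. Unlike the Engel group, where the identity $\tilde X_3-X_3=x_1\partial_{x_4}$ isolated $x_1$ alone, the natural algebraic identity in the Cartan setting couples $x_1$ and $x_2$ together; this is the structural reason a second relation (obtained by a parameter differentiation in $\nu$) is required.

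For the first relation, I would use the bases \eqref{left.inv.car} and \eqref{dx1,right,cartan}--\eqref{dx345.right,cartan} to verify the algebraic identity
\[
X_3-\tilde X_3=-x_1\partial_{x_4}-x_2\partial_{x_5}=-x_1X_4-x_2X_5,
\]
observing that $X_4=\partial_{x_4}$ and $X_5=\partial_{x_5}$ commute with multiplication by $x_1$ and $x_2$ respectively. Combining this with the left/right-invariance rules $\pi(X\kappa)=\pi(X)\pi(\kappa)$, $\pi(\tilde X\kappa)=\pi(\kappa)\pi(X)$ and the scalar actions $\pi_{\lambda,\mu,\nu}(X_4)=i\lambda$, $\pi_{\lambda,\mu,\nu}(X_5)=i\mu$ from \eqref{inf3,4,5,car}, I expect to obtain
\[
\lambda\,\Delta_{x_1}\pi_{\lambda,\mu,\nu}(\kappa)+\mu\,\Delta_{x_2}\pi_{\lambda,\mu,\nu}(\kappa)
= i\bigl(\pi_{\lambda,\mu,\nu}(X_3)\pi_{\lambda,\mu,\nu}(\kappa)-\pi_{\lambda,\mu,\nu}(\kappa)\pi_{\lambda,\mu,\nu}(X_3)\bigr).
\]

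For the second relation, I would differentiate the explicit formula \eqref{group-fourier-transform-cartan} for $\pi_{\lambda,\mu,\nu}(\kappa)h(u)$ with respect to $\nu$. Inspecting the expression for $(A^{\lambda,\mu,\nu}_{x_1,\dots,x_5})^*$ given after \eqref{adjoin_pi,cartan}, only the leading term $\frac{1}{2}\frac{\nu}{\lambda^2+\mu^2}(\mu x_1-\lambda x_2)$ depends on $\nu$, and the shift $(\lambda x_1+\mu x_2)/(\lambda^2+\mu^2)$ does not. Pulling the $\nu$-derivative under the integral produces a multiplier $\frac{i(\mu x_1-\lambda x_2)}{2(\lambda^2+\mu^2)}$, so that
\[
\partial_\nu\pi_{\lambda,\mu,\nu}(\kappa)=\frac{i}{2(\lambda^2+\mu^2)}\bigl(\mu\,\Delta_{x_1}-\lambda\,\Delta_{x_2}\bigr)\pi_{\lambda,\mu,\nu}(\kappa),
\]
equivalently
\[
\mu\,\Delta_{x_1}\pi_{\lambda,\mu,\nu}(\kappa)-\lambda\,\Delta_{x_2}\pi_{\lambda,\mu,\nu}(\kappa)=-2i(\lambda^2+\mu^2)\,\partial_\nu\pi_{\lambda,\mu,\nu}(\kappa).
\]

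The two relations form a non-degenerate linear system on $\hat{\mathcal B}_5$ precisely because $\lambda^2+\mu^2\neq 0$ on $\hat{\mathcal B}_5$ by Proposition \ref{Dixmier,cartan}; solving (multiply the first by $\lambda$, the second by $\mu$, and add for $\Delta_{x_1}$; by $\mu$ and $-\lambda$ respectively for $\Delta_{x_2}$) yields exactly the two formulas asserted in the statement. The main obstacle I expect is bookkeeping: ensuring the sign on the adjoint exponent is handled consistently (so that differentiating the Fourier transform $\int\kappa(x)\pi(x)^*dx$ really reproduces multiplication by $\frac{i}{2(\lambda^2+\mu^2)}(\mu x_1-\lambda x_2)$ rather than its negative), and checking that the hypotheses $\kappa,x_i\kappa\in\mathcal K_{a,b}$ let us differentiate under the integral and interpret $\partial_\nu\pi(\kappa)$ as a bounded operator-valued object. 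Once those are in place, the algebra is straightforward linear solving and the expressions match verbatim.
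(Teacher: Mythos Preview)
Your proposal is correct and uses the same two key ingredients as the paper: the $\nu$-differentiation of the integral formula and the algebraic identity $\tilde X_3-X_3=x_1X_4+x_2X_5$. The only difference is organizational: the paper solves the resulting system by substitution (assuming $\mu\neq0$, then treating $\mu=0$ separately), whereas your direct elimination using $\lambda^2+\mu^2\neq0$ on $\hat{\mathcal B}_5$ handles both cases uniformly and is slightly cleaner.
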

\begin{proof}
We proceed by finding the explicit formulas for the above operators simultaneously, by solving a system of equations. In particular, for $\kappa$ as in the statement, and $h \in \mathcal{S}(\mathbb{R})$, we start by differentiating with respect to $\nu$ the group Fourier transform of $\kappa$ given in \eqref{group-fourier-transform-cartan}. Then,
\begin{align}\label{dnu}
\partial_{\nu}\{\pi_{\lambda,\mu,\nu}(\kappa)h(u)\}&=\int_{\mathbb{R}^5} \kappa(x) \frac{\partial}{\partial \nu} \left[e^{iA^{*}} \cdot h \left(u-\frac{\lambda x_1+\mu x_2}{\lambda^2+\mu^2}\right) \right]\,dx\nonumber\\
&=\int_{\mathbb{R}^5} \kappa(x)\cdot e^{iA^{*}}\cdot \frac{\partial(i A^{*})}{\partial \nu}\cdot h\left(u-\frac{\lambda x_1+\mu x_2}{\lambda^2+\mu^2}\right)\,dx\nonumber\\
&=\frac{i \mu}{2(\lambda^2+\mu^2)}\int_{\mathbb{R}^5} x_1 \kappa(x) \cdot h\left(u-\frac{\lambda x_1+\mu x_2}{\lambda^2+\mu^2}\right)\,dx\nonumber\\
&-\frac{i \lambda}{2(\lambda^2+\mu^2)} \int_{\mathbb{R}^5} x_2 \kappa(x) \cdot h\left(u-\frac{\lambda x_1+\mu x_2}{\lambda^2+\mu^2}\right)\,dx\nonumber\\
&=\frac{i \mu}{2(\lambda^2+\mu^2)}\pi_{\lambda,\mu,\nu}(x_1 \kappa)h(u)-\frac{i\lambda}{2(\lambda^2+\mu^2)}\pi_{\lambda,\mu,\nu}(x_2 \kappa)h(u)\,.
\end{align}
Now, observe that by \eqref{left.inv.car} and \eqref{dx345.right,cartan}, we have that 
\[
\tilde{X}_{3}-X_{3}=x_1X_4+x_2X_5=X_4x_1+X_5x_2\,.
\]
Then, for any suitable $\kappa$ as in the statement, as $X_4=\frac{\partial}{\partial x_4}$ and $X_5=\frac{\partial}{\partial{x_5}}$, since $\pi_{\lambda,\mu,\nu}(X_4)=i \lambda$ and $\pi_{\lambda,\mu,\nu}(X_5)=i \mu$, one gets
\[
\pi_{\lambda, \mu, \nu} \left(\frac{\partial}{\partial{x_4}} \kappa \right)=\pi_{\lambda, \mu, \nu}(X_4 \kappa)=\pi_{\lambda, \mu, \nu}(X_4) \pi_{\lambda, \mu, \nu} (\kappa)= i \lambda \pi_{\lambda, \mu, \nu}(\kappa)\,,
\]
and
\[
\pi_{\lambda, \mu, \nu} \left(\frac{\partial}{\partial{x_5}} \kappa \right)=\pi_{\lambda, \mu, \nu}(X_5 \kappa)=\pi_{\lambda, \mu, \nu}(X_5) \pi_{\lambda, \mu, \nu} (\kappa)= i \mu \pi_{\lambda, \mu, \nu}(\kappa)\,.
\]
Assume first that $\mu \neq 0$. The above observations yield
\begin{align*}
\pi_{\lambda, \mu, \nu}(x_2 \kappa)h(u)&=\frac{1}{i \mu}\pi_{\lambda, \mu, \nu}\left(X_5x_2\kappa\right)h(u)\\
&=-\frac{i}{\mu}\pi_{\lambda, \mu, \nu}\left(\left( \tilde{X}_3-X_3-x_1 X_4\right)\kappa \right)h(u)\\
&=\frac{i}{\mu}\left(\pi_{\lambda, \mu, \nu}\left(X_4x_1\kappa\right)+\pi_{\lambda, \mu, \nu}\left(X_3\kappa \right)-\pi_{\lambda, \mu, \nu}\left(\tilde{X}_3 \kappa \right) \right)h(u)\,.
\end{align*}
Thus, since $\pi_{\lambda, \mu, \nu}(X_4)=i \lambda$, we finally get
\begin{equation}\label{pi(x2k)cartan}
\pi_{\lambda, \mu, \nu}(x_2\kappa)h(u)=\left[\left(-\frac{\lambda}{\mu}\right)\pi_{\lambda, \mu, \nu}(x_1 \kappa)+\frac{i}{\mu}( \pi_{\lambda, \mu, \nu}(X_3\kappa)-\pi_{\lambda, \mu, \nu}(\tilde{X}_3 \kappa)) \right]h(u)\,.
\end{equation}
Plugging \eqref{pi(x2k)cartan} into \eqref{dnu} yields
\begin{align*}
\frac{\partial}{\partial \nu} \{\pi_{\lambda, \mu, \nu}(\kappa)h(u)\}&=\frac{i\mu}{2(\lambda^2+\mu^2)}\pi_{\lambda, \mu, \nu}(x_1 \kappa)h(u)\nonumber\\
&-\frac{i \lambda}{2(\lambda^2+\mu^2)}\left[\left(-\frac{\lambda}{\mu}\right)\pi_{\lambda, \mu, \nu}(x_1\kappa)+\frac{i}{\mu}\left( \pi_{\lambda, \mu, \nu}(X_3\kappa)-\pi_{\lambda, \mu, \nu}(\tilde{X}_3 \kappa) \right) \right]h(u)\nonumber\\
&=\frac{i}{2\mu}\pi_{\lambda, \mu, \nu}(x_1 \kappa)h(u)\nonumber\\
&+\frac{\lambda}{2\mu(\lambda^2+\mu^2)}\left[\pi_{\lambda, \mu, \nu}(X_3\kappa)-\pi_{\lambda, \mu, \nu}(\tilde{X}_3 \kappa) \right]h(u)\,.
\end{align*}
Hence, the difference operator $\Delta_{x_1|\pi_{\lambda, \mu, \nu}}$ is acting on $\mathcal{S}(\mathbb{R})$ by
\begin{align}\label{dx1,cartan}
\pi_{\lambda, \mu, \nu}(x_1 \kappa)h(u)&=- 2 i\mu \frac{\partial}{\partial \nu}\{\pi_{\lambda, \mu, \nu}(\kappa)h(u)\}\nonumber\\
&+\frac{i \lambda}{(\lambda^2+\mu^2)}\left( \pi_{\lambda, \mu, \nu}(X_3)\pi_{\lambda, \mu, \nu}(\kappa)-\pi_{\lambda, \mu, \nu}(\kappa)\pi_{\lambda, \mu, \nu}(X_3) \right)h(u)\,.
\end{align}
Lastly, if we plug \eqref{dx1,cartan} into \eqref{pi(x2k)cartan}, we see that the difference operator $\Delta_{x_2|\pi_{\lambda, \mu, \nu}}$ is acting on $\mathcal{S}(\mathbb{R})$ via
\begin{eqnarray}\label{dx2,cartan}
\pi_{\lambda, \mu, \nu}(x_2 \kappa)h(u)&=& 2 i\lambda  \frac{\partial}{\partial \nu}\{\pi_{\lambda, \mu, \nu}(\kappa)h(u)\}\nonumber\\
&+&i\frac{\mu}{\lambda^2+\mu^2} \left(\pi_{\lambda,\mu,\nu}(X_3) \pi_{\lambda, \mu, \nu}(\kappa)-\pi_{\lambda, \mu, \nu}(\kappa)\pi_{\lambda, \mu, \nu}(X_3) \right)h(u)\,.
\end{eqnarray}
Now consider the case where $\mu=0$. Then $\lambda \neq 0$, and by \eqref{dnu} we get
\begin{equation}\label{casemu0}
\pi_{\lambda,\mu,\nu}(x_2 \kappa)h(u)=2i\lambda \frac{\partial}{\partial \nu} \{\pi_{\lambda, \mu, \nu}(\kappa)h(u)\}\,,
\end{equation}
and by the above observations 
\begin{align*}
\pi_{\lambda, \mu, \nu}(x_1 \kappa)h(u)&=\frac{1}{i \lambda}\pi_{\lambda, \mu, \nu}\left(X_4x_1\kappa\right)h(u)\\
&=-\frac{i}{\lambda}\pi_{\lambda, \mu, \nu}\left(\left( \tilde{X}_3-X_3-X_5x_2\right)\kappa \right)h(u)\\
&=\frac{i}{\lambda}\left(\pi_{\lambda, \mu, \nu}\left(X_5x_2\kappa\right)+\pi_{\lambda, \mu, \nu}\left(X_3\kappa \right)-\pi_{\lambda, \mu, \nu}\left(\tilde{X}_3 \kappa \right) \right)h(u)\,,
\end{align*}
while since $\pi_{\lambda,\mu,\nu}(X_5)=i\mu=0$ and by using \eqref{casemu0} we finally get 
\begin{equation}\label{casemu0x1}
\pi_{\lambda,\mu,\nu}(x_1 \kappa)h(u)=\frac{i}{\lambda}\left( \pi_{\lambda,\mu,\nu}(X_3)\pi_{\lambda,\mu,\nu}(\kappa)-\pi_{\lambda,\mu,\nu}(\kappa)\pi_{\lambda,\mu,\nu}(X_3) \right)h(u)\,,
\end{equation}
and this concludes the proof if one observes that \eqref{casemu0} and \eqref{casemu0x1} are particular cases of \eqref{dx2,cartan} and \eqref{dx1,cartan}, respectively. 
\end{proof}
\begin{prop}\label{dx3,car}
Let $\kappa \in \mathcal{K}_{a,b}(\mathcal{B}_5)$ be a tempered distribution, such that $x_3 \kappa$ is in some $\mathcal{K}_{a^{'},b^{'}}(\mathcal{B}_5)$. Then, the formula for the difference operator $\Delta_{x_3}$ acting on the space $\mathcal{S}(\mathbb{R})$ is the following: 
\begin{align*}
\Delta_{x_3} \pi_{\lambda, \mu, \nu}(\kappa)&=\frac{i}{\lambda} \left(\Delta_{x_2} \pi_{\lambda, \mu, \nu}(\kappa) \pi_{\lambda, \mu, \nu}(X_3)-\pi_{\lambda, \mu, \nu}(X_1) \pi_{\lambda, \mu, \nu}(\kappa)\right)\\
&+\frac{i}{\lambda}\pi_{\lambda, \mu, \nu}(\kappa) \pi_{\lambda, \mu, \nu}(X_1)+\frac{\mu}{\lambda}\Delta_{x_2}^{2}\pi_{\lambda, \mu,\nu}(\kappa)\,,
\end{align*}
where the difference operator $\Delta_{x_2 | \pi_{\lambda, \mu, \nu}}$, is as in Proposition \ref{dx1dx2,cartan}, and the operators $\pi_{\lambda, \mu, \nu}(X_i)$, $i=1,3,5$, are given in \eqref{inf2,car} and \eqref{inf3,4,5,car}.
\end{prop}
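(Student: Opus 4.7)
The proof plan mirrors that of Proposition \ref{dx3,engel}. The strategy is to find an algebraic identity that expresses $x_3$ times a known left-invariant vector field as a combination of left and right invariant vector fields (possibly multiplied by coordinate functions whose difference operators are already known from Proposition \ref{dx1dx2,cartan}), and then to apply $\pi_{\lambda,\mu,\nu}$ using its transparent behavior on each type of object.

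Subtracting \eqref{dx1,right,cartan} from \eqref{left.inv.car}, one obtains
\[
X_1 - \tilde{X}_1 = x_2\partial_{x_3} + x_3\partial_{x_4} - \tfrac{x_2^2}{2}\partial_{x_5} = x_2\tilde{X}_3 + x_3 X_4 - \tfrac{x_2^2}{2}X_5,
\]
which I rearrange into the working identity
\[
x_3 X_4 \kappa = \bigl(X_1 - \tilde{X}_1 - x_2\tilde{X}_3 + \tfrac{x_2^2}{2}X_5\bigr)\kappa.
\]
Since $X_4 = \partial_{x_4}$ commutes with multiplication by $x_3$, the left-hand side equals $X_4(x_3\kappa)$. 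Applying $\pi_{\lambda,\mu,\nu}$ and using $\pi_{\lambda,\mu,\nu}(X_4) = i\lambda$ from \eqref{inf3,4,5,car} then yields
\[
i\lambda\,\Delta_{x_3}\pi_{\lambda,\mu,\nu}(\kappa) = \pi_{\lambda,\mu,\nu}\bigl((X_1 - \tilde{X}_1 - x_2\tilde{X}_3 + \tfrac{x_2^2}{2}X_5)\kappa\bigr).
\]

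The remaining step is to expand the right-hand side via the two bullet-point properties of the group Fourier transform from the start of Section \ref{SEC:Dif}, together with $\pi_{\lambda,\mu,\nu}(x_i f) = \Delta_{x_i}\pi_{\lambda,\mu,\nu}(f)$ and the factorization $\Delta_{x_2^2} = \Delta_{x_2}^2$ from \cite[Proposition 5.2.10]{FR16}. The four terms become $\pi(X_1)\pi(\kappa)$, $\pi(\kappa)\pi(X_1)$, $\Delta_{x_2}\pi(\kappa)\pi(X_3)$ (using that $\tilde{X}_3 = \partial_{x_3}$ commutes with multiplication by $x_2$, so the two natural orderings agree), and a scalar multiple of $\Delta_{x_2}^2\pi(\kappa)$ coming from $\pi_{\lambda,\mu,\nu}(X_5) = i\mu$. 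Dividing through by $i\lambda$ and regrouping produces the stated formula.

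No step presents a genuine obstacle; the calculation is forced once the identity for $x_3 X_4$ is spotted. The only delicate point is that division by $i\lambda$ requires $\lambda \neq 0$. The complementary case $\lambda = 0$, $\mu \neq 0$ in $\hat{\mathcal{B}}_5$ can be treated by isolating $x_3 X_5$ instead of $x_3 X_4$ in an analogous identity and checking that the two expressions coincide on their common domain, in the same spirit as how the cases $\mu = 0$ and $\mu \neq 0$ are reconciled at the end of the proof of Proposition \ref{dx1dx2,cartan}.
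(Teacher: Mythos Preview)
Your proposal is correct and follows essentially the same route as the paper: both derive the identity $x_3 X_4 = X_1 - \tilde{X}_1 - x_2\tilde{X}_3 + \tfrac{x_2^2}{2}X_5$ from the explicit formulas for the left- and right-invariant vector fields, then apply $\pi_{\lambda,\mu,\nu}$ and expand term by term using $\pi_{\lambda,\mu,\nu}(X_4)=i\lambda$ and $\pi_{\lambda,\mu,\nu}(X_5)=i\mu$. Your additional remark on handling the case $\lambda=0$ via the symmetric identity for $x_3 X_5$ is a nice touch that the paper itself does not address.
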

\begin{proof}
Since $X_4=\frac{\partial}{\partial x_4}$, then 
\begin{align*}
X_4x_3&=x_3X_4=x_3\frac{\partial}{\partial x_4}=x_3\frac{\partial}{\partial x_4}+\frac{\partial}{\partial x_1}-\frac{\partial}{\partial x_1}+x_2\frac{\partial}{\partial x_3}-x_2\frac{\partial}{\partial x_3}+\frac{x_{2}^{2}}{2}\frac{\partial}{\partial x_5}-\frac{x_{2}^{2}}{2}\frac{\partial}{\partial x_5}\\
&=X_1-x_2\tilde{X}_3+\frac{x_{2}^{2}}{2}X_5-\tilde{X_1}\,,
\end{align*}
where for the last equality we have used \eqref{left.inv.car},\eqref{dx345.right,cartan}, and \eqref{dx1,right,cartan}. Now, by \eqref{inf3,4,5,car} we get
\begin{align*}
\pi_{\lambda, \mu, \nu}(x_3 \kappa)&= \frac{1}{i \lambda} \pi_{\lambda, \mu, \nu}(X_4 x_3 \kappa)=\frac{1}{i \lambda}\pi_{\lambda, \mu, \nu} \left( \left(X_1-x_2 \tilde{X_3}+\frac{x_{2}^{2}}{2} X_5-\tilde{X_1} \right) \kappa \right)\\
&= \frac{1}{i \lambda} \left( \pi_{\lambda, \mu, \nu}(X_1 \kappa)-\pi_{\lambda, \mu, \nu}(\tilde{X_3}x_2 \kappa)+\pi_{\lambda, \mu, \nu}\left(X_5 \frac{x_{2}^{2}}{2} \kappa \right)-\pi_{\lambda, \mu, \nu}(\tilde{X_1} \kappa) \right)\\
&= \frac{1}{i \lambda} \left(\pi_{\lambda, \mu, \nu}(X_1) \pi_{\lambda, \mu, \nu}(\kappa)-\Delta_{x_2} \pi_{\lambda, \mu, \nu}(\kappa) \pi_{\lambda, \mu, \nu}(X_3)\right)\\
&+\frac{1}{i \lambda} \left(\frac{1}{2}\pi_{\lambda, \mu, \nu}(X_5) \Delta_{x_{2}}^{2}\pi_{\lambda, \mu, \nu}(\kappa)-\pi_{\lambda, \mu, \nu}(\kappa) \pi_{\lambda, \mu, \nu}(X_1) \right)\,,
\end{align*}
where $\pi_{\lambda,\mu,\nu}(X_5)=i\mu$, which shows the desired result.
\end{proof}

\begin{prop}\label{dx4,cartan}
Let $\kappa \in \mathcal{K}_{a,b}(\mathcal{B}_5)$, such that $x_4 \kappa$ is in some $\mathcal{K}_{a^{'},b^{'}}(\mathcal{B}_5)$. Then, we have for all $h \in \mathcal{S}(\mathbb{R})$ that 
\begin{align*}
(\Delta_{x_4} \pi_{\lambda, \mu, \nu} (\kappa))h(u)&= i \Bigg( \frac{\partial}{\partial \lambda} \left\lbrace \pi_{\lambda, \mu, \nu}(\kappa)h(u) \right\rbrace -\Bigg[\frac{2 \lambda \mu}{(\lambda^2+\mu^2)^2}\Bigg]  \Delta_{x_2} \pi_{\lambda, \mu, \nu}(\kappa)\left\lbrace \frac{d}{du}h \right\rbrace (u)\nonumber \\
&+\Bigg[\frac{\mu^2-\lambda^2}{(\lambda^2+\mu^2)^2}\Bigg] \Delta_{x_1} \pi_{\lambda, \mu, \nu}(\kappa)\left\lbrace\frac{d}{du}h\right\rbrace(u)\Bigg) \\
&+(b_1^1+b_1^2 u^2)\left\lbrace\Delta_{x_1} \pi_{\lambda, \mu, \nu} (\kappa)h(u) \right\rbrace +(b_{2}^{1}+ b_{2}^{2} u^2)\left\lbrace \Delta_{x_2}  \pi_{\lambda, \mu, \nu}(\kappa)h(u) \right\rbrace \nonumber\\
&+ u b_{1,2} \left\lbrace\Delta_{x_1} \Delta_{x_2} \pi_{\lambda, \mu, \nu}(\kappa)h(u) \right\rbrace+ b_{1,1,2} \left\lbrace\Delta_{x_1}^{2} \Delta_{x_2} \pi_{\lambda, \mu, \nu}(\kappa)h(u) \right\rbrace  \nonumber \\
&+ b_{1,2,2}\left\lbrace \Delta_{x_1} \Delta_{x_2}^{2} \pi_{\lambda, \mu, \nu}(\kappa)h(u) \right\rbrace b_{1,1,1}\left\lbrace\Delta_{{x_1}}^{3}\pi_{\lambda, \mu, \nu}(\kappa)h(u) \right\rbrace \nonumber \\
 &+ b_{2,2,2} \left\lbrace\Delta_{x_2}^{3} \pi_{\lambda, \mu, \nu}(\kappa)h(u) \right\rbrace \,,
\end{align*}
where the difference operators $\Delta_{x_i | \pi_{\lambda, \mu, \nu}}$, $i=1,2,3$, are as in Propositions \ref{dx1dx2,cartan} and \ref{dx3,car}, and 
\[
b_1^1=\frac{-\nu \lambda \mu }{(\lambda^2+\mu^2)^2}\,,\quad b_1^2=-\lambda \mu \,,\quad b_2^1=\frac{\nu \lambda^2-\nu \mu^2}{2(\lambda^2+\mu^2)^2}\,,
\]
\[
b_2^2=\frac{9 \lambda^6+3\mu^6+15 \lambda^2\mu^4+21\lambda^4\mu^2}{6(\lambda^2+\mu^2)^2}\,,\quad b_{1,2}=-2\lambda\,,\quad b_{1,1,2}=\frac{3\lambda^4+9\mu^4}{6(\lambda^2+\mu^2)^2}\,,
\]
\[
 b_{1,2,2}=\frac{-2\lambda \mu^3}{(\lambda^2+\mu^2)^2}\,,\quad b_{1,1,1}=\frac{4\lambda \mu^3}{3(\lambda^2+\mu^2)^2}\,,\quad b_{2,2,2}=\frac{2\mu^2\lambda^2-2\mu^4}{3(\lambda^2+\mu^2)^2}\,.
\]

\end{prop}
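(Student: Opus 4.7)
The plan is to proceed by analogy with the derivation of Proposition \ref{dx4,eng}: differentiate the explicit integral representation \eqref{group-fourier-transform-cartan} of $\pi_{\lambda,\mu,\nu}(\kappa)h(u)$ with respect to $\lambda$, and then solve the resulting identity for $\pi_{\lambda,\mu,\nu}(x_4\kappa)h(u) = \Delta_{x_4}\pi_{\lambda,\mu,\nu}(\kappa)h(u)$. The extra feature compared with the Engel case is that in the Cartan group the argument of $h$, namely $u - (\lambda x_1+\mu x_2)/(\lambda^2+\mu^2)$, itself depends on $\lambda$; hence $\partial_\lambda$ produces two families of contributions: a \emph{phase contribution} of the form $i(\partial_\lambda A^*)\,\kappa\,e^{iA^*}h(\cdot)$, and a \emph{chain-rule contribution} of the form $\kappa\,e^{iA^*}h'(\cdot)\,\partial_\lambda[-(\lambda x_1+\mu x_2)/(\lambda^2+\mu^2)]$.

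From the phase contribution, the unique summand of $A^*$ that is linear in $x_4$ is $-\lambda x_4$, and its $\lambda$-derivative yields the term $-ix_4\,\kappa(x)e^{iA^*}h(\cdot)$, which integrates to $-i\Delta_{x_4}\pi_{\lambda,\mu,\nu}(\kappa)h(u)$; this is the term we will move to the other side. Every remaining summand of $\partial_\lambda A^*$ is a polynomial in $(x_1,x_2)$ multiplied by a power of $u$ and by a rational function of $(\lambda,\mu,\nu)$. Since $u$ is independent of the integration variable, each power of $u$ pulls outside the $x$-integral, while each monomial $x_1^a x_2^b$ is absorbed into the Fourier transform as $\Delta_{x_1}^a\Delta_{x_2}^b\pi_{\lambda,\mu,\nu}(\kappa)$ by Definition \ref{diff.op2}; because $x_1$ and $x_2$ commute as scalar coordinate functions, the order of composition is irrelevant.

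A direct calculation gives $\partial_\lambda\bigl[-(\lambda x_1+\mu x_2)/(\lambda^2+\mu^2)\bigr] = \bigl[(\lambda^2-\mu^2)x_1 + 2\lambda\mu x_2\bigr]/(\lambda^2+\mu^2)^2$, so the chain-rule contribution collapses to $\frac{\lambda^2-\mu^2}{(\lambda^2+\mu^2)^2}\Delta_{x_1}\pi_{\lambda,\mu,\nu}(\kappa)\{h'\}(u) + \frac{2\lambda\mu}{(\lambda^2+\mu^2)^2}\Delta_{x_2}\pi_{\lambda,\mu,\nu}(\kappa)\{h'\}(u)$, by the same absorption argument applied to $h'$ in place of $h$. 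Rearranging the identity so that $-i\Delta_{x_4}\pi_{\lambda,\mu,\nu}(\kappa)h(u)$ is isolated and multiplying through by $i$ produces the claimed formula, with the two ``leading'' chain-rule terms appearing inside the factored $i(\cdot)$ block of the statement and the remaining phase terms appearing as the coefficients $b_j^i$, $b_{i,j}$, $b_{i,j,k}$ multiplying the appropriate compositions of $\Delta_{x_1}$ and $\Delta_{x_2}$.

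The main obstacle is purely computational: in $A^*$, the three summands carrying factors of $(u-(\lambda x_1+\mu x_2)/(\lambda^2+\mu^2))$ and $(u-(\lambda x_1+\mu x_2)/(\lambda^2+\mu^2))^2$ produce numerous cross-terms after differentiation, since both the outer prefactor and the inner shift depend on $\lambda$. These cross-terms have to be combined with the contribution from the cubic polynomial $\lambda^2 x_1^3 + 3\lambda\mu x_1^2 x_2 + 3\mu^2 x_1 x_2^2 - \lambda\mu x_2^3$ (divided by $\lambda^2+\mu^2$) in order to reproduce the exact rational coefficients $b_2^2$, $b_{1,1,2}$, $b_{1,2,2}$, $b_{1,1,1}$ and $b_{2,2,2}$ listed in the statement; the use of $\partial_\lambda[1/(\lambda^2+\mu^2)] = -2\lambda/(\lambda^2+\mu^2)^2$ and a careful expansion of the squared shift into monomials in $u$ are the only nontrivial algebraic ingredients.
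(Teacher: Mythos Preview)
Your proposal is correct and follows essentially the same route as the paper: differentiate \eqref{group-fourier-transform-cartan} in $\lambda$, split the result into the phase contribution $i(\partial_\lambda A^{*})$ and the chain-rule contribution coming from the $\lambda$-dependent argument of $h$, isolate the unique $-ix_4$ term, and recognise every remaining monomial $x_1^a x_2^b$ as a composition $\Delta_{x_1}^a\Delta_{x_2}^b$. The paper carries out exactly this computation, recording $\partial_\lambda(iA^{*})$ directly in the expanded form with the $b$-coefficients and handling the chain-rule term just as you describe; the only difference is that the paper does not spell out the intermediate expansion of the shifted $u$-terms that you flag as the main computational obstacle.
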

\begin{proof}
Let $\kappa$ be as in the statement. We start by differentiating with respect to $\lambda$ the group Fourier transform of $\kappa$, given in \eqref{group-fourier-transform-cartan}, at $h \in \mathcal{S}(\mathbb{R})$. This yields, 
\begin{align*}
\frac{\partial}{\partial \lambda} \{ \pi_{\lambda, \mu, \nu}(\kappa) h(u) \}&= \int_{\mathbb{R}^5} \kappa(x) \frac{\partial}{\partial \lambda} \left[ e^{iA^{*}} \cdot h \left(u- \frac{\lambda x_1+\mu x_2}{\lambda^2 + \mu^2} \right)\right] dx \\
&= \int_{\mathbb{R}_5} \kappa(x) \cdot e^{iA^{*}} \cdot \frac{\partial (iA^{*})}{\partial \lambda} \cdot h \left(u- \frac{\lambda x_1+\mu x_2}{\lambda^2 + \mu^2} \right)dx\\
&+ \int_{\mathbb{R}_5} \kappa(x) \cdot e^{iA^{*}}\\
& \left[ \frac{2 \lambda(\lambda x_1 + \mu x_2)-x_1 (\lambda^2 + \mu^2)}{(\lambda^2 + \mu^2)^{2}} \right]\frac{d}{du} h \left( u- \frac{\lambda x_1 + \mu x_2}{\lambda^2 +\mu^2} \right) dx\\
&=I+II\,. 
\end{align*}
Dealing with integral $II$, we see that 
\begin{align}\label{II}
II&= \int_{\mathbb{R}_5}\kappa(x)\cdot e^{iA^{*}} \left\lbrace x_2 \frac{2 \lambda \mu}{(\lambda^2 +\mu^2)^2}+x_1 \frac{- \mu^2+ \lambda^2}{(\lambda^2 + \mu^2)^2} \right\rbrace \frac{d}{du} h \left( u- \frac{\lambda x_1 + \mu x_2}{\lambda^2 +\mu^2} \right) dx \nonumber \\
&= \frac{2 \lambda \mu}{(\lambda^2 +\mu^2)^2} \int_{\mathcal{B}_5}x_2 \kappa(x)\cdot e^{iA^{*}} \cdot \frac{d}{du} h \left( u- \frac{\lambda x_1 + \mu x_2}{\lambda^2 +\mu^2} \right) dx \nonumber \\
&+ \frac{- \mu^2+ \lambda^2}{(\lambda^2 + \mu^2)^2}  \int_{\mathcal{B}_5}x_1 \kappa(x) \cdot e^{iA^{*}} \cdot\frac{d}{du} h \left( u- \frac{\lambda x_1 + \mu x_2}{\lambda^2 +\mu^2} \right) dx \nonumber \\
&= \Bigg( \Bigg[\frac{2 \lambda \mu}{(\lambda^2 +\mu^2)^2}\Bigg] \Delta_{x_2} \pi_{\lambda, \mu, \nu} (\kappa) \left\lbrace\frac{d}{du}h \right\rbrace+\Bigg[ \frac{- \mu^2+ \lambda^2}{(\lambda^2 + \mu^2)^2}\Bigg] \Delta_{x_1} \pi_{\lambda, \mu, \nu} (\kappa) \left\lbrace\frac{d}{du}h \right\rbrace \Bigg)(u)\,.
\end{align}
For the integral $I$, we see that since 
\begin{align}\label{II1}
\frac{\partial (iA^{*})}{\partial \lambda}&= i \left\lbrace -x_4 +(b_1^1 +b_1^2 u^2)x_1+(b_2^1+b_2^2 u^2)x_2+ u b_{1,2} x_1x_2+b_{1,1,2}x_1^2 x_2+b_{1,2,2}x_1 x_2^2 \right\rbrace\nonumber \\
&+ i \left\lbrace b_{1,1,1} x_1^3+b_{2,2,2} x_2^3 \right\rbrace\,,
\end{align}
then, reasoning as in the case of the integral $II$, we get
\begin{align*}
I&=i \Big[ \left\lbrace -\Delta_{x_4}\pi_{\lambda, \mu, \nu}(\kappa) +(b_1^1 +b_1^2 u^2)\Delta_{x_1}\pi_{\lambda, \mu, \nu}(\kappa)+(b_2^1+b_2^2 u^2)\Delta_{x_2}\pi_{\lambda, \mu, \nu}(\kappa)\right\rbrace h(u) \Big]\nonumber \\
&+ i\Big[ \left\lbrace u b_{1,2} \Delta_{x_1}\Delta_{x_2}\pi_{\lambda, \mu, \nu}(\kappa)+b_{1,1,2}\Delta_{x_1}^2 \Delta_{x_2}\pi_{\lambda, \mu, \nu}(\kappa)+b_{1,2,2}\Delta_{x_1}\Delta_{x_2}^{2}\pi_{\lambda, \mu, \nu}(\kappa) \right\rbrace h(u) \Big] \nonumber\\
&+i \Big[ \left\lbrace b_{1,1,1}\Delta_{x_1}^{3}\pi_{\lambda, \mu, \nu}(\kappa)+b_{2,2,2}\Delta_{x_2}^{3}\pi_{\lambda, \mu, \nu}(\kappa)\right\rbrace h(u) \Big]\,,
\end{align*}
where the $b$'s are as in the statement, and the proof is complete.
\end{proof}
\begin{prop}\label{dx5,cartan}
Let $\kappa \in \mathcal{K}_{a,b}(\mathcal{B}_5)$, such that $x_5 \kappa$ is in some $\mathcal{K}_{a^{'},b^{'}}(\mathcal{B}_5)$. Then, we have for all $h \in \mathcal{S}(\mathbb{R})$ that
\begin{align*}
(\Delta_{x_5} \pi_{\lambda, \mu, \nu} (\kappa))h(u)&= i \Bigg( \frac{\partial}{\partial \mu} \left\lbrace \pi_{\lambda, \mu, \nu}(\kappa)h(u) \right\rbrace-\Bigg[\frac{2 \lambda \mu}{(\lambda^2+\mu^2)^2}\Bigg]\Delta_{x_1} \pi_{\lambda, \mu, \nu}(\kappa)\left\lbrace \frac{d}{du}h \right\rbrace (u)\nonumber\\
&+ \Bigg[\frac{\lambda^2-\mu^2}{(\lambda^2+\mu^2)^2}\Bigg] \Delta_{x_2} \pi_{\lambda, \mu, \nu}(\kappa)\left\lbrace\frac{d}{du}h \right\rbrace (u)\Bigg) \\
&+(b_1^1+b_1^2 u^2)\left\lbrace\Delta_{x_1} \pi_{\lambda, \mu, \nu} (\kappa)h(u) \right\rbrace +(b_{2}^{1}+ b_{2}^{2} u^2)\left\lbrace \Delta_{x_2}  \pi_{\lambda, \mu, \nu}(\kappa)h(u)\right\rbrace\nonumber\\
&+ b_{1,1,2}\left\lbrace \Delta_{x_1}^{2} \Delta_{x_2} \pi_{\lambda, \mu, \nu}(\kappa)h(u) \right\rbrace++ b_{1,2,2} \left\lbrace\Delta_{x_1} \Delta_{x_2}^{2} \pi_{\lambda, \mu, \nu}(\kappa)h(u)\right\rbrace\nonumber \\
&+b_{1,1,1} \left\lbrace\Delta_{x_1}^{3} \pi_{\lambda, \mu, \nu}(\kappa)h(u)\right\rbrace+ b_{2,2,2}\left\lbrace \Delta_{x_2}^{3} \pi_{\lambda, \mu, \nu}(\kappa)h(u)\right\rbrace\,,
\end{align*}
where the difference operators $\Delta_{x_i | \pi_{\lambda, \mu, \nu}}$, $i=1,2,3$, are as in Propositions \ref{dx1dx2,cartan} and \ref{dx3,car}, and
\[
b_1^1=\frac{\nu \lambda^2-\nu \mu^2}{2(\lambda^2+\mu^2)^2}\,,\quad b_1^2=-\frac{3 \lambda^6+9\mu^6+15\lambda^4\mu^2+21\lambda^2\mu^4}{6(\lambda^2+\mu^2)^2}\,,\quad b_2^1=\frac{\nu \lambda \mu }{(\lambda^2+\mu^2)^2}\,,
\]
\[
b_2^2=\lambda \mu\,,\quad b_{1,1,2}=\frac{2\lambda^3\mu}{(\lambda^2+\mu^2)^2}\,,\quad b_{1,2,2}=\frac{\mu^4+3\mu^2\lambda^2}{(\lambda^2+\mu^2)^2}\,,
\]
\[
  b_{1,1,1}=\frac{2\lambda^4-2\lambda^2\mu^2}{3(\lambda^2+\mu^2)^2}\,,\quad b_{2,2,2}=\frac{-4\lambda^3\mu}{3(\lambda^2+\mu^2)^2}\,.
\]

\end{prop}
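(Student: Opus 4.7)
The approach mirrors the proof of Proposition \ref{dx4,cartan} almost verbatim, with $\mu$ playing the role that $\lambda$ played there. Starting from the explicit integral representation of $\pi_{\lambda,\mu,\nu}(\kappa)h(u)$ in \eqref{group-fourier-transform-cartan}, I would differentiate with respect to $\mu$ under the integral sign. This splits the result into two pieces: a term $I$ coming from $\partial_\mu e^{iA^*}$, and a term $II$ coming from $\partial_\mu h\bigl(u-\tfrac{\lambda x_1+\mu x_2}{\lambda^2+\mu^2}\bigr)$. The key observation, which makes this strategy work, is that $A^*$ contains the summand $-\mu x_5$, so $\partial_\mu A^*$ produces $-x_5$ among its terms; after applying $\pi_{\lambda,\mu,\nu}$ this becomes $-\Delta_{x_5}\pi_{\lambda,\mu,\nu}(\kappa)$, which can then be isolated on one side of the identity.

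For term $II$, a direct computation gives
\[
\partial_\mu\Bigl(u-\tfrac{\lambda x_1+\mu x_2}{\lambda^2+\mu^2}\Bigr)
=\tfrac{2\lambda\mu}{(\lambda^2+\mu^2)^2}\,x_1+\tfrac{\mu^2-\lambda^2}{(\lambda^2+\mu^2)^2}\,x_2,
\]
and the monomial factors $x_1, x_2$ in the resulting integrals translate, via Definition \ref{diff.op2} and the formulas of Proposition \ref{dx1dx2,cartan}, into $\Delta_{x_1}$ and $\Delta_{x_2}$ acting on $\pi_{\lambda,\mu,\nu}(\kappa)\{\tfrac{d}{du}h\}$. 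This reproduces the first bracketed line in the statement.

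For term $I$, I would expand $\partial_\mu A^*$ by term-wise differentiation of the explicit expression for $A^*$ in Proposition \ref{Dixmier,cartan}, collect the coefficients of each monomial in $x_1,x_2$ (with appropriate powers of $u$ multiplying them), and then rewrite every $x_1^ax_2^b$ prefactor as the composition $\Delta_{x_1}^a\Delta_{x_2}^b$ applied to $\pi_{\lambda,\mu,\nu}(\kappa)$, using \cite[Proposition~5.2.10]{FR16} (cited in the excerpt) which gives $\Delta_{x^\alpha}=\Delta_{x_1}^{\alpha_1}\cdots\Delta_{x_n}^{\alpha_n}$. The constants $b_1^1,b_1^2,b_2^1,b_2^2,b_{1,1,2},b_{1,2,2},b_{1,1,1},b_{2,2,2}$ then read off directly from the explicit form of $\partial_\mu A^*$. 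Solving the resulting identity for $\Delta_{x_5}\pi_{\lambda,\mu,\nu}(\kappa)h(u)=\pi_{\lambda,\mu,\nu}(x_5\kappa)h(u)$ yields the stated formula.

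The only real obstacle is the bookkeeping for term $I$: one must carefully differentiate the long polynomial phase $A^*$ in $\mu$, distinguish which monomials carry an explicit $u^2$ (from the $(\lambda^2+\mu^2)(\mu x_1-\lambda x_2)u^2$ summand after differentiation) from those that do not, and verify that no term involving $x_3, x_4$ appears (so that the right-hand side indeed involves only $\Delta_{x_1}$ and $\Delta_{x_2}$). I expect the structural parallel with Proposition \ref{dx4,cartan} — where the coefficients are obtained by essentially the same procedure with $\lambda$ swapped for $\mu$ and a corresponding sign flip from $-\lambda x_4$ versus $-\mu x_5$ — to provide a useful sanity check on the final coefficient list.
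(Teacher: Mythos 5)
Your proposal follows the paper's proof essentially verbatim: the paper likewise differentiates the integral formula \eqref{group-fourier-transform-cartan} with respect to $\mu$, splits the result into the term $II$ coming from $\partial_\mu h\bigl(u-\tfrac{\lambda x_1+\mu x_2}{\lambda^2+\mu^2}\bigr)$ (whose derivative you compute correctly) and the term $I$ coming from $\partial_\mu(iA^{*})$, reads off the coefficients $b$ from the explicit expansion of $\partial_\mu(iA^{*})$ in monomials $x_1^a x_2^b$, and isolates the $-x_5$ contribution to solve for $\Delta_{x_5}\pi_{\lambda,\mu,\nu}(\kappa)$. The structural analogy with Proposition \ref{dx4,cartan} that you invoke is exactly how the paper organizes the argument, so the proposal is correct and takes the same route.
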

\begin{proof}
For $\kappa$ as in the statement, and for $h \in \mathcal{S}(\mathbb{R})$, we differentiate with respect to $\mu$ the expression \eqref{group-fourier-transform-cartan} at $h$. In particular, 
\begin{align*}
\frac{\partial}{\partial \mu} \{ \pi_{\lambda, \mu, \nu}(\kappa) h(u) \}&= \int_{\mathbb{R}^5} \kappa(x) \frac{\partial}{\partial \mu} \left[ e^{iA^{*}} \cdot h \left(u- \frac{\lambda x_1+\mu x_2}{\lambda^2 + \mu^2} \right)\right] dx \\
&= \int_{\mathbb{R}_5} \kappa(x) \cdot e^{iA^{*}} \cdot \frac{\partial (iA^{*})}{\partial \mu} \cdot h \left(u- \frac{\lambda x_1+\mu x_2}{\lambda^2 + \mu^2} \right)dx\\
&+ \int_{\mathbb{R}_5} \kappa(x) \cdot e^{iA^{*}}\\
& \left[ \frac{2 \mu(\lambda x_1 + \mu x_2)-x_2 (\lambda^2 + \mu^2)}{(\lambda^2 + \mu^2)^{2}} \right]\frac{d}{du} h \left( u- \frac{\lambda x_1 + \mu x_2}{\lambda^2 +\mu^2} \right) dx\\
&=I+II\,. 
\end{align*}
Dealing with integral $II$, we see that 
\begin{align}\label{II}
II&= \int_{\mathbb{R}_5}\kappa(x)\cdot e^{iA^{*}} \left\lbrace x_1 \frac{2 \lambda \mu}{(\lambda^2 +\mu^2)^2}+x_2 \frac{ \mu^2- \lambda^2}{(\lambda^2 + \mu^2)^2} \right\rbrace \frac{d}{du} h \left( u- \frac{\lambda x_1 + \mu x_2}{\lambda^2 +\mu^2} \right) dx \nonumber \\
&= \frac{2 \lambda \mu}{(\lambda^2 +\mu^2)^2} \int_{\mathcal{B}_5}x_1 \kappa(x) \cdot e^{iA^{*}} \cdot \frac{d}{du} h \left( u- \frac{\lambda x_1 + \mu x_2}{\lambda^2 +\mu^2} \right) dx \nonumber \\
&+ \frac{ \mu^2- \lambda^2}{(\lambda^2 + \mu^2)^2}  \int_{\mathcal{B}_5}x_2 \kappa(x) \cdot e^{iA^{*}} \cdot\frac{d}{du} h \left( u- \frac{\lambda x_1 + \mu x_2}{\lambda^2 +\mu^2} \right) dx \nonumber \\
&= \Bigg(\left[ \frac{2 \lambda \mu}{(\lambda^2 +\mu^2)^2}\right] \Delta_{x_1} \pi_{\lambda, \mu, \nu} (\kappa) \left\lbrace\frac{d}{du}h \right\rbrace + \left[\frac{ \mu^2- \lambda^2}{(\lambda^2 + \mu^2)^2}\right]  \Delta_{x_2} \pi_{\lambda, \mu, \nu} (\kappa)\left\lbrace \frac{d}{du}h \right\rbrace  \Bigg)(u)\,.
\end{align}
For the integral $I$, we see that since 
\begin{align}\label{II1}
\frac{\partial (iA^{*})}{\partial_{\mu}}&= i \left\lbrace -x_5 +(b_1^1 +b_1^2 u^2)x_1+(b_2^1+b_2^2 u^2)x_2+ b_{1,1,2}x_1^2 x_2\right\rbrace\nonumber \\
&+ i \left\lbrace b_{1,2,2}x_1 x_2^2 + b_{1,1,1} x_1^3+b_{2,2,2} x_2^3 \right\rbrace\,,
\end{align}
then, arguing as in the case of the integral $II$ in terms of difference operators, we get
\begin{align*}
I&=i \left[ \left\lbrace -\Delta_{x_5}\pi_{\lambda, \mu, \nu}(\kappa) +(b_1^1 +b_1^2 u^2)\Delta_{x_1}\pi_{\lambda, \mu, \nu}(\kappa)+(b_2^1+b_2^2 u^2)\Delta_{x_2}\pi_{\lambda, \mu, \nu}(\kappa)\right\rbrace h \right](u)\nonumber \\
&+ i \left[ \left\lbrace b_{1,1,2}\Delta_{x_1}^2 \Delta_{x_2}\pi_{\lambda, \mu, \nu}(\kappa)+ b_{1,2,2}\Delta_{x_1}\Delta_{x_2}^{2}\pi_{\lambda, \mu, \nu}(\kappa) \right\rbrace h \right](u)\nonumber\\
&+i \left[\left\lbrace b_{1,1,1}\Delta_{x_1}^{3}\pi_{\lambda, \mu, \nu}(\kappa)+b_{2,2,2}\Delta_{x_2}^{3}\pi_{\lambda, \mu, \nu}(\kappa)\right\rbrace h \right](u)\,.
\end{align*}
where the $b$'s are as in the statement, and the proof is complete.
\end{proof}

\section{$L^p-L^q$ mutlipliers on the Engel and the Cartan groups}
\label{SEC:multipliers}
The following theorem is a version of the Fourier multiplier theorem (see, \cite[Theorem 7.1]{AR17}), in the case of spectral multipliers on locally compact groups. In particular, the aforesaid theorem affirms that the $L^p-L^q$ norms of spectral multipliers $\phi \left( |\mathcal{L}| \right)$ depend on the growth rate of the spectral asymptotics of the considered operator $\mathcal{L}$.

By a left multiplier in this setting we mean the left-invariant operator that is $\tau$-measurable, where $\tau$ refers to the trace functional on the positive part
\[
VN_{R}^{+}(\G):=\{A \in VN_{R}(\G) : A^{*}=A>0\}\,,
\]
of the right group von Neumann algebra $VN_{R}(\G)$, see \cite{AR20} and \cite[Appendix B]{FR16} for a detailed definition of the above. 
\begin{thm}{\cite[Theorem 1.1]{AR17}}
\label{Theorem 1.1,AR}
Let $\G$ be a locally compact separable unimodular group and let $\mathcal{L}$ be a left Fourier multiplier on $\G$. Assume that $\phi$ is monotonically decreasing continuous function on $[0, \infty)$ such that 
\[
\phi(0)=1\,,
\]
\[
\lim_{s \rightarrow \infty} \phi(s)=0\,.
\]
Then we have the inequality 
\begin{equation*}
\label{Theorem 1.1,ineqiality}
\| \phi \left( | \mathcal{L}| \right)\|_{L^p(\G) \rightarrow L^q(\G)} \lesssim \sup_{s>0} \phi(s) \left[ \tau \left(E_{(0,s)}\left(|\mathcal{L}| \right) \right)\right]^{\frac{1}{p}-\frac{1}{q}}\,, \quad 1<p \leq 2 \leq q < \infty\,.
\end{equation*}
\end{thm}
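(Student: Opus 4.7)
The plan is to establish this as a noncommutative Hörmander--Paley $L^p$--$L^q$ Fourier multiplier theorem on the semifinite von Neumann algebra $VN_R(\G)$ equipped with its canonical Plancherel trace $\tau$. Since $\mathcal{L}$ is affiliated to $VN_R(\G)$, so is $\phi(|\mathcal{L}|)$ via Borel functional calculus, and the theorem reduces to a bound on its norm as a convolution operator acting through Fourier multiplication.

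The core step is a noncommutative Paley inequality: for $1 < p \leq 2$ and any left Fourier multiplier $T$ corresponding to a $\tau$-measurable operator on $VN_R(\G)$, one has
\begin{equation*}
\|Tf\|_{L^{2}(\G)} \lesssim \|T\|_{L^{r, \infty}(VN_R(\G))}\, \|f\|_{L^p(\G)}, \qquad \tfrac{1}{r} = \tfrac{1}{p} - \tfrac{1}{2},
\end{equation*}
where $L^{r,\infty}(VN_R(\G))$ is the noncommutative weak-Lorentz space defined through the generalized singular value function. I would establish this by real interpolation: the trivial $L^2 \to L^2$ bound with constant $\|T\|_\infty$ is combined with a weak endpoint estimate at $p=1$ of the form
\begin{equation*}
\tau\bigl(E_{(A, \infty)}(|Tf|)\bigr) \lesssim \frac{\|T\|_{L^{r,\infty}}^{r}\,\|f\|_{L^1(\G)}^{r}}{A^{r}},
\end{equation*}
which follows from the Plancherel identity on the unimodular group together with the definition of the generalized singular value function via spectral projections.

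To reach the full range $1 < p \leq 2 \leq q < \infty$, I would combine this estimate with its $L^2 \to L^q$ dual (legitimate because $\phi(|\mathcal{L}|)$ is self-adjoint) through noncommutative Hölder in $L^{r}(VN_R(\G))$ with $1/r = 1/p - 1/q$. The spectral side is then identified using the monotonicity of $\phi$ and of $s \mapsto \tau(E_{(0,s)}(|\mathcal{L}|))$: the generalized singular value function of $\phi(|\mathcal{L}|)$ at parameter $t$ is $\phi$ evaluated at a generalized inverse of the spectral counting function, and consequently
\begin{equation*}
\|\phi(|\mathcal{L}|)\|_{L^{r,\infty}(VN_R(\G))} = \sup_{s>0} \phi(s)\,[\tau(E_{(0,s)}(|\mathcal{L}|))]^{1/r},
\end{equation*}
which, with $1/r = 1/p - 1/q$, yields exactly the right-hand side claimed in the theorem.

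The principal obstacle is the weak endpoint distributional inequality underlying the Paley step: the classical Euclidean proof relies on a commutative Calderón--Zygmund atomic decomposition that is unavailable in the operator setting, so one must work intrinsically with $\tau$-measurable operators using generalized singular values and noncommutative Lorentz spaces in the spirit of Fack--Kosaki, exploiting the semifiniteness of $\tau$ and the compatibility of the spectral resolution of $|\mathcal{L}|$ with the trace. Once this endpoint is in place, the real interpolation, duality, and spectral identification steps proceed in a routine fashion.
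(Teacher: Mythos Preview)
The paper does not prove this theorem at all: it is quoted verbatim from \cite{AR17} (Akylzhanov--Ruzhansky) and used as a black box to derive the $L^p$--$L^q$ estimates in Examples \ref{multipliers,engel} and \ref{multipliers,cartan}. There is therefore no ``paper's own proof'' to compare your proposal against.

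That said, your sketch is a reasonable outline of how the result is actually established in \cite{AR17}: the argument there does proceed via a noncommutative Hausdorff--Young/Paley inequality on the group von~Neumann algebra $VN_R(\G)$ with its Plancherel trace, real interpolation to reach the range $1<p\le 2$, duality to pass to $2\le q<\infty$, and finally the identification of the weak-Lorentz norm of $\phi(|\mathcal{L}|)$ with $\sup_{s>0}\phi(s)\,[\tau(E_{(0,s)}(|\mathcal{L}|))]^{1/r}$ via generalized singular values in the sense of Fack--Kosaki. One minor imprecision: the endpoint you describe at $p=1$ is not how the Paley step is carried out in \cite{AR17}; rather, one proves an $L^p\to L^p(VN_R(\G))$ Paley inequality directly for $1<p\le 2$ by interpolating between the Plancherel isometry at $p=2$ and a weighted $L^1$-type bound, and then combines it with Hausdorff--Young via Stein--Weiss interpolation. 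Your ``weak endpoint distributional inequality'' is not the actual mechanism, though the overall architecture you describe is correct.
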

 We denote by $E_{(0,s)}\left(|\mathcal{L}| \right)$, the spectral projection associated to the operator $\mathcal{L}$ to the interval $(0,s)$. Notice also that since $E_{(0,s)}\left(|\mathcal{L}| \right) \in VN_{R}(\G)$, it makes sense to write $\tau \left( E_{(0,s)}\left( | \mathcal{L} | \right) \right)$, see \cite[Section 2]{AR20} for precise definitions and explanations.
 
To motivate the examples that we give later, let us first recall some well-known results in the compact case and in the case of the Heisenberg groups $\mathbb{H}^n$. Let $-\Delta_{sub}$ be the positive sub-Laplacian on a compact Lie group $\G$, or in the graded case, on the Heisenberg group $\mathbb{H}^n$. Then, the trace of the spectral projections has the asymptotics
\begin{equation}
\label{compact,heisen.}
\tau \left(E_{(0,s)}\left(-\Delta_{sub} \right) \right)\lesssim s^{Q/2}\,,\quad \textrm{as}\, \quad s \rightarrow \infty\,,
\end{equation}
where $Q$ is the homogeneous dimension in the case of $\mathbb{H}^{n}$, and the Hausdorff dimension generated by the control distanse of $\Delta_{sub}$ in the case of the compact group $\G$, respectively. This result has been shown in the compact case in \cite{HK16}, and in the Heisenberg setting in \cite{AR20}. Thus, an application of Theorem \ref{Theorem 1.1,AR} yields for both cases:
\begin{itemize}
\item $\| \phi \left( -\Delta_{sub} \right)\|_{L^p(\G) \rightarrow L^q(\G)} \lesssim \sup_{s>0} \phi(s)\cdot s^{Q/2}$, for $\phi$ as in Theorem \ref{Theorem 1.1,AR}, provided that $1<p \leq 2 \leq q< \infty$.
\item For $f$ in the Schwartz space of each group, one has
\begin{equation}
\label{sob,estimates,gen}
\|f\|_{W^{b,q}_{\Delta_{sub}}} \leq C \|f\|_{W^{a,p}_{\Delta_{sub}}}\,,\quad \textrm{if} \quad a-b \leq Q \left(\frac{1}{p}-\frac{1}{q} \right)\,,
\end{equation}
where the last inequality is the well-known Sobolev embedding estimates. Recall the definition of the Sobolev norm associated to the sub-Laplacian, that is,
\[
\| f \|_{W^{a,p}_{\Delta_{sub}}}:=\|(I-\Delta_{sub})^{a/2} f \|_{L^p(\G)}\,.
\]
\end{itemize}
Recently, in \cite{RR18}, the authors, employing different machinery from the one we have used in this work, proved that \eqref{compact,heisen.} holds true for any graded Lie group, and thus, recover the well-established, frst for the case where the Sobolev norm is associated with the sub-Laplacian, see \cite{Fol75}, and later on for the one associated with any Rockland operator, see \cite{FR16}, Sobolev embedding estimates of the form \eqref{sob,estimates,gen}.

In this paper, we prove the $L^p-L^q$ boundedness for the spectral multiplier of a left-invariant, non-Rockland operator on each group. Notably, we extend the Sobolev type estimates, and estimate the time decay of the solution of the heat equations in each setting.
\begin{exa}($L^p-L^q$ multipliers on the  Engel group).
\label{multipliers,engel}
Let $\phi$ be as in Theorem \ref{Theorem 1.1,AR}.
Then, for
\[
A=- \left(X_1^2+X_2^2+X_3^2+X_4^2+X_4^{-2} \right)\,,
\]
with $X_i$, $i=1, \cdots,4$, being the left-invariant vector fields on $\mathcal{B}_4$, given in \eqref{left.inv.eng}, we have the inequality 
\[
\| \phi(A) \|_{L^{p}(\mathcal{B}_4) \rightarrow L^{q}(\mathcal{B}_4)} \lesssim C \cdot\sup_{s>0}\phi(s)\cdot s^{\frac{3}{r}}\,,
\]
where $\frac{1}{r}=\frac{1}{p}-\frac{1}{q}$, for $1<p \leq 2 \leq q < \infty$. In addition, for $f \in \mathcal{S}(\mathcal{B}_4)$, the Sobolev-type inequalities
\begin{equation*}
\label{for_phi,engel}
\|(I+A)^{b} f\|_{L^{q}(\mathcal{B}_4)} \leq \| (I+A)^{a}f \|_{L^{p}(\mathcal{B}_4)}
\end{equation*}
hold true for $a, b \in\mathbb{R}$, such that $a-b\geq 	\frac{3}{r}$, and for $p,q$ in the above range.
\end{exa}Before turning on to prove the above let us first note that the left-invariant Fourier multiplier $X_{4}^{-2}$, if $\lambda \neq 0$, can be realised through its symbol, i.e.,
\[
X_{4}^{-2}f=\mathcal{F}_{\mathcal{B}_4}(\pi_{\lambda,\mu}(X_{4})^{-2}\pi_{\lambda,\mu}(f))=\mathcal{F}_{\mathcal{B}_4}((i\lambda)^{-2}\pi_{\lambda,\mu}(f))\,,\quad f \in \mathcal{S}(\mathcal{B}_4)\,.
\]

 \begin{proof}[Proof of Example \ref{multipliers,engel}]
 For the operator $A$ as in the statement, and by \cite[Theorem 1 on page 225]{Dix81} we get
 \begin{equation}
 \label{Dix,trace,eng}
 \tau \left( E_{(0,s)}(A)\right)\sim\int_{\hat{\mathcal{B}_4}} \tau \left(E_{(0,s)}\left[ \pi_{\lambda,\mu}(A) \right]  \right)    d\lambda\, d\mu\,.
 \end{equation}
 For $\mu \in \mathbb{R}\,, \lambda \neq 0$, the operator $\pi_{\lambda, \mu}(A): \mathcal{S}(\mathbb{R}) \rightarrow \mathcal{S}(\mathbb{R})$, is the symbol of $A$, given by  
 \begin{equation*}\label{R,eng}
 \pi_{\lambda, \mu}(A)=-\frac{d^2}{du^2}+ \frac{1}{4}\left( \lambda u^2- \frac{\mu}{\lambda} \right)^{2}+(\lambda u)^{2}+\lambda^{2}+\lambda^{-2}\,,\quad u \in \mathbb{R}\,,
 \end{equation*}
 as \eqref{inf1,2,eng} and \eqref{inf3,4,5,eng} imply. Observe that $\pi_{\lambda, \mu}(A)$ is of the form 
 \begin{equation*}
 \label{anharmonic}
 -\frac{d^2}{du^2}+V(u)\,,
 \end{equation*}
 i.e. is a particular case of the anharmonic oscillator on $\mathcal{S}(\mathbb{R}) \subset L^2(\mathbb{R})$. Thus, it is known, see \cite{CDR18}, that its spectrum is discrete, and if $N_{A}(s)$ denotes the number of its eigenvalues that are less that $s$, then 
 \begin{equation}\label{type1,eng}
 \tau \left(E_{(0,s)}\left[ \pi_{\lambda,\mu}(A) \right] \right) = N_{A}(s)\,,
 \end{equation}
 since $\mathcal{B}_4$ is of type $I$, see \cite{AR20}. Let $\sigma_{A}$ be the Weyl symbol of $A$. Then, for $\mu \in \mathbb{R}$, and for $\lambda>0$, without loss of generality,
 \[
 \sigma_{A}(u,\xi)= \xi^{2}+\frac{1}{4}\left( \lambda u^2- \frac{\mu}{\lambda} \right)^{2}+(\lambda u)^{2}+\lambda^{2}+\lambda^{-2}\,.
 \]
Then, \cite[Theorem 3.2]{BBR96} implies that 
   \begin{equation}
   \label{BBR, engel}
   N_{A}(s) \lesssim C \int_{ \sigma_{A}(u,\xi)<s} 1 \cdot du\,d\xi\,, \quad \textrm{as}\quad s\rightarrow \infty\,.
   \end{equation}
   Assuming that $\sigma_{A}(u,\xi)<s$, it follows immediately that
   \begin{equation}
   \label{for_lamda,xi,engel}
   s^{-1/2}< \lambda<s^{1/2}\,,\quad |\xi|<s^{1/2}\,.
   \end{equation}
   and
   \begin{equation}
   \label{sigma<s,engel}
   \frac{1}{4}\left( \lambda u^2-\frac{\mu}{\lambda}\right)^2, (\lambda u)^2<s\,.
   \end{equation}
   Now, by \eqref{for_lamda,xi,engel} and \eqref{sigma<s,engel}, we get 
   $ |u|< \frac{s^{1/2}}{\lambda}< \frac{s^{1/2}}{s^{-1/2}}=s$,
   and
   \begin{equation*}
-2s^{1/2} \cdot s^{1/2}<-2\lambda s^{1/2}<2(\lambda^2 u^2- \lambda s^{1/2})<\mu< 2(\lambda^{2}u^{2}+\lambda s^{1/2})<2(s+s^{1/2}\cdot s^{1/2})\,.
\end{equation*}
In particular, we have $-2s<\mu<4s$. Then, by \eqref{BBR, engel}, and since $|u|<s, |\xi|<s^{1/2}$, we have $N_{A}(s) \lesssim C \cdot s^{3/2}$, for $s \rightarrow \infty$, whereas by \eqref{type1,eng}, the integration on the dual $\hat{\mathcal{B}_4}$ in \eqref{Dix,trace,eng} for $\lambda \in (s^{-1/2},s^{1/2})$, $\mu \in (-2s,4s)$ becomes
   \begin{align*}
    \tau \left( E_{(0,s)}(A)\right)&\sim\int_{s^{-1/2}}^{s^{1/2}} \int_{-2s}^{4s} \tau \left(E_{(0,s)}\left[ \pi_{\lambda,\mu}(A) \right]  \right)    d\lambda\, d\mu\\
    & \lesssim C \cdot \int_{s^{-1/2}}^{s^{1/2}} \int_{-2s}^{4s} s^{3/2} d\lambda\,d\mu \lesssim C^{'}\cdot s^3\,,\quad s \rightarrow \infty\,.
   \end{align*}
   Summarising we have $ \tau \left( E_{(0,s)}(A)\right) \lesssim C^{'} \cdot s^3$, for $s \rightarrow \infty$. Then, for $\phi$ as in the statement, an application of Theorem \ref{Theorem 1.1,AR}, yields
   \[
   \|\phi\left( A \right)\|_{L^{p}(\mathcal{B}_4) \rightarrow L^{q}(\mathcal{B}_4)} \lesssim C^{'} \cdot \sup_{s>0} \phi(s) \cdot  \tau \left( E_{(0,s)}(A)\right)^{\frac{1}{p}-\frac{1}{q}} \lesssim C^{'} \cdot \sup_{s>0}\phi(s) \cdot  s^{\frac{3}{r}}\,,
   \]
   where $\frac{1}{r}=\frac{1}{p}-\frac{1}{q}$, and $1<p \leq 2 \leq q < \infty$, and this shows the first claim in Example \ref{multipliers,engel}.  Now, if we choose 
\begin{equation}
\label{def_of_phi,eng}   
   \phi=\phi(s):=\frac{1}{(1+s)^{a-b}}\,,\quad s>0\,,
   \end{equation}
   then, for $f \in \mathcal{S}(\mathcal{B}_4)$,
   \[
 \|(I+A)^{-a} (I+A)^{b} f\|_{L^{q}(\mathcal{B}_4)} \lesssim \|f \|_{L^{p}(\mathcal{B}_4)}\,,
   \]
   or, as claimed in the statement,
   \[
   \|(I+A)^{b} f\|_{L^{q}(\mathcal{B}_4)} \lesssim \| (I+A)^{a}f \|_{L^{p}(\mathcal{B}_4)}\,.
   \]
   The proof is complete.
 \end{proof}
 We note that, in the above example, the operator $A$ can be generalised as being any non-Rockland operator of the form
\[
A=-(X_{1}^{2}+ X_{2}^{2n_2}\pm X_{3}^{2n_3}\pm X_{4}^{2n_3}\pm X_{4}^{-2n_4})\,,
\]
where $n_i \geq 2$, $n_i \in \mathbb{N}$ for $i=1,\cdots,4$, and for $i=3,4$ we choose the positive sign of the corresponding left-invariant operator if $n_i$ is of the form $n_i=1+2k$, $k \geq 1$, and the negative sign, otherwise. 
 \begin{rem}(Sobolev-type estimates on the Engel group)
 \label{Sob,emb2,eng}
 For $b=0$ in the definition of the function $\phi$ in \eqref{def_of_phi,eng}, we obtain Sobolev-type embedding inequalities of the form
 \[
 \|f\|_{L^{q}(\mathcal{B}_4)} \leq C \| (I+A)^a f\|_{L^{p}(\mathcal{B}_4)}\,,
 \]
 for $a \geq \frac{3}{r}$ and $1<p \leq 2 \leq q < \infty$.
 \end{rem}
 \begin{rem}(The $A$-heat equation)
 \label{heat,eng}
 For the operator $A$ as in Example \eqref{multipliers,engel} consider the heat equation 
 \begin{equation}
 \label{heat.eq,eng}
 \partial_t u+A u=0\,,\quad u(0)=u_0\,.
 \end{equation}
It is not difficult to check that, for each $t>0$, the function $u(t)=u(t,x):=e^{-tA}u_0$, is a solution of the initial value problem \eqref{heat.eq,eng}. For $t>0$, also set $\phi=\phi(s):=e^{-ts}$. Then $\phi$ satisfies the assumptions stated before in Theorem \ref{Theorem 1.1,AR}, and as an application of the Example \ref{multipliers,engel} we get
 \begin{equation}\label{norm.op.heat.eng}
 \| e^{-tA}\|_{L^p(\mathcal{B}_4) \rightarrow L^q(\mathcal{B}_4)} \lesssim C \cdot \sup_{s>0}e^{-ts} \cdot s^{\frac{3}{r}}\,,
 \end{equation}
 where $\frac{1}{r}=\frac{1}{p}-\frac{1}{q}$, and $1<p \leq 2 \leq q < \infty$. Using techniques of standard mathematical analysis, we see that 
 \[
 \sup_{s>0} e^{-ts} \cdot s^{\frac{3}{r}}=\left( \frac{3}{tr} \right)^{\frac{3}{r}}e^{-\frac{3}{r}}:=C_{p,q} t^{-3\left(\frac{1}{p}-\frac{1}{q}\right)}\,.
 \]
 Indeed, if we consider the function $g(s)= e^{-ts} \cdot s^{\frac{3}{r}}$, then $g^{'}(s)=s^{\frac{3}{r}-1}e^{-ts} \left( \frac{3}{r}-st \right)$, whereas $g^{'}\left( \frac{3}{rt} \right)=0$, and $g^{''}\left( \frac{3}{rt} \right)<0$. Then, by \eqref{norm.op.heat.eng}, we obtain
 \[
 \|u(t, \cdot) \|_{L^q(\mathcal{B}_4)} \lesssim C_{p,q} t^{-3\left(\frac{1}{p}-\frac{1}{q}\right)} \|u_0\|_{L^{p}(\mathcal{B}_4)}\,, 1<p \leq 2 \leq q< \infty\,,
 \]
 where the last equations yields the time decay for the solution of the heat equation in this setting. 
 \end{rem}
 
 \begin{exa}
 \label{multipliers,cartan}($L^p-L^q$ multipliers on the Cartan group).
 Let $\phi$ be a function as in Theorem \ref{Theorem 1.1,AR}. Let
 \[
 B=-\left(X_1^2+X_2^2+X_3^2+X_4^2+X_5^2+X_4^{-2}+X_5^{-2} \right)\,,
 \]
 where $X_i$ are the left-invariant vector fields on $\mathcal{B}_5$, given in \eqref{left.inv.car}. Then,
 \[
 \| \phi(B) \|_{L^p(\mathcal{B}_5) \rightarrow L^q(\mathcal{B}_5)} \lesssim C \cdot \sup_{s>0} \phi(s) \cdot s^{\frac{9}{2r}}\,,
 \]
 where $\frac{1}{r}=\frac{1}{p}-\frac{1}{q}$, for $1<p \leq 2 \leq q < \infty$. In addition, for $f \in \mathcal{S}(\mathcal{B}_5)$, we get the Sobolev-type inequalities of the form 
\begin{equation}
\label{Sob,cartan}
\| (I+B)^b f \|_{L^q(\mathcal{B}_5)} \leq \| (I+B)^a f \|_{L^q(\mathcal{B}_5)}\,,
\end{equation}
for $a,b \in \mathbb{R}$, such that $a-b\geq \frac{9}{2r}$ and for $p,q$ in the range above.
\end{exa}
As before, the left-invariant Fourier multiplier $X_{5}^{-2}$, if $\mu\neq 0$, can be realised as
\[
X_{5}^{-2}f=\mathcal{F}_{\mathcal{B}_5}(\pi_{\lambda,\mu,\nu}(X_5)^{-2}\pi_{\lambda,\mu,\nu}(f))=\mathcal{F}_{\mathcal{B}_4}((i\mu)^{-2}\pi_{\lambda,\mu,\nu}(f))\,,\quad f \in \mathcal{S}(\mathcal{B}_5)\,,
\]
and similarly for $X_{4}^{-2}$.
\begin{proof}[Proof of Example \ref{multipliers,cartan}]
Let the operator $B$ be as in the statement. Then, an application of \cite[Theorem 1 on page 225]{Dix81} yields
\begin{equation*}
 \tau \left( E_{(0,s)}(B)\right)\sim\int_{\hat{\mathcal{B}_5}} \tau \left(E_{(0,s)}\left[ \pi_{\lambda,\mu, \nu}(B) \right]  \right)    d\lambda\, d\mu\,d\nu\,,
\end{equation*} 
or, for our purposes,
\begin{equation}
\label{Dix, cartan}
 \tau \left( E_{(0,s)}(B)\right)\sim\int_{\hat{\mathcal{B}_5}\setminus \{\lambda=0\}\cup \{\mu=0\}}\tau \left(E_{(0,s)}\left[ \pi_{\lambda,\mu, \nu}(B) \right]  \right)    d\lambda\, d\mu\,d\nu\,,
\end{equation} 
where $d\lambda\,d\mu\,d\nu$ is the Plancherel measure on $\hat{\mathcal{B}_5}$ found in \cite{Dix57}. For $\nu \in \mathbb{R}\,, \lambda,\mu \neq 0$, and $u \in \mathbb{R}$ the operator $\pi_{\lambda, \mu, \nu}(B) : \mathcal{S}(\mathbb{R}) \subset L^2(\mathbb{R}) \rightarrow L^2(\mathbb{R})$, is the symbol of $B$ given by 
\[
\pi_{\lambda, \mu, \nu}(B)=-\frac{1}{\lambda^2+\mu^2} \frac{d^2}{du^2}+\frac{1}{4}\frac{(\nu+(\lambda^2+\mu^2)^{2}u^2)^2}{\lambda^2+\mu^2}+(\lambda^2+\mu^2)^2 u^2+\lambda^{2}+\mu^{2}+\lambda^{-2}+\mu^{-2}\,,
\]
as \eqref{inf2,car} and \eqref{inf3,4,5,car} give. Arguing as we did for the proof of the Example \ref{multipliers,engel}, we have
\begin{equation}\label{type1,car}
 \tau \left(E_{(0,s)}\left[ \pi_{\lambda,\mu,\nu}(B) \right] \right) = N_{B}(s)\,,
 \end{equation}
 where $N_{B}(s)$ denotes the number of the eigenvalues of $\pi_{\lambda, \mu, \nu}(B)$ that are less than $s>0$.
 Observe that $\pi_{\lambda, \mu, \nu}(B)$ is a rescaled anharmonic oscillator. Now, if we consider the operator $\pi_{\lambda, \mu, \nu}(B^{'})=(\lambda^2+\mu^2) \cdot\pi_{\lambda, \mu, \nu}(B) $ on $L^2(\mathbb{R})$, then by \cite[Theorem 3.2]{BBR96}, for $s \rightarrow \infty$, we have
\begin{equation}
\label{BBG,cartan}
N_{B}(s)=N_{B^{'}}((\lambda^2+\mu^2)\cdot s) \lesssim C \cdot \int_{\sigma_{B^{'}}(u,\xi)< (\lambda^2+\mu^2)\cdot s} 1 \cdot du\,d\xi\,,
\end{equation}
where with $N_{B^{'}}((\lambda^2 + \mu^2)\cdot s)$ we have denoted the number of eigenvalues of $\pi_{\lambda, \mu, \nu}(B^{'})$, that are less than $(\lambda^2+\mu^2) \cdot s$, and with $\sigma_{B^{'}}$, the Weyl symbol of the anharmonic oscillator $\pi_{\lambda, \mu, \nu}(B^{'})$. Then,
\[
\sigma_{B^{'}}(u,\xi)=\xi^2+ (\nu+(\lambda^2+\mu^2)^{2}u^2)^{2}/4+(\lambda^2+\mu^2)^3 u^2 + (\lambda^2+\mu^2) (\lambda^{2}+\mu^{2})+ (\lambda^2+\mu^2) (\lambda^{-2}+\mu^{-2})\,,
\]
and without loss of generality we can assume $\lambda, \mu>0$. Now, if $\sigma_{B^{'}}(u,\xi)<(\lambda^2+\mu^2) \cdot s$, then 
\begin{equation}
\label{for_l,m,cartan}
s^{-1/2}<\lambda, \mu< s^{1/2}\,, \quad \textrm{and}
\end{equation}
\begin{equation}
\label{for_x,xi,nu,cartan}
(\lambda^2+\mu^2)^{2}u^2<s\,,\quad (\nu+(\lambda^2+\mu^2)^{2}u^2)^{2}<4 s (\lambda^2+\mu^2)\,,\quad \xi^2 < s(\lambda^2+\mu^2)\,.
\end{equation}
Now, by \eqref{for_l,m,cartan} and \eqref{for_x,xi,nu,cartan}, we get $|u|<\frac{s^{1/2}}{\lambda^2+\mu^2}<\frac{s^{1/2}}{s^{-1}+s^{-1}}=\frac{s^{3/2}}{2}$, $| \xi |<\sqrt{2} s^{1/2}\cdot s^{1/2}$ and
\[
\nu^{2}<(\nu+(\lambda^{2}+\mu^{2})^{2}u^{2})^{2}<4s (\lambda^{2}+\mu^{2})<4s^{2}\,.
\]
In particular, we have $| \nu |<2s$. Then, by \eqref{BBG,cartan}, and for $u,\xi$ in the above range, we have 
\begin{eqnarray}
\label{N^',cartan}
N_{B}(s)=N_{B^{'}}((\lambda^2+\mu^2) \cdot s) \lesssim C \cdot s^{3/2} \cdot 2\sqrt{2} s \lesssim 2\sqrt{2} C \cdot s^{5/2}\,,\quad s \rightarrow \infty\,.
\end{eqnarray}
Collecting, \eqref{type1,car} and \eqref{N^',cartan}, the integration on the dual $\hat{\mathcal{B}_5}$ that appears in \eqref{Dix, cartan}, for $\nu \in (-2s,2s)$ and $\lambda, \mu \in (s^{-1/2},s^{1/2})$, becomes
\begin{align*}
\tau \left(E_{(0,s)} \left( B\right) \right)&\sim\int_{-2s}^{2s} \int_{s^{-1/2}}^{s^{1/2}} \int_{s^{-1/2}}^{s^{1/2}}  \tau \left(E_{(0,s)}\left[ \pi_{\lambda,\mu, \nu}(B) \right]  \right)    d\lambda\, d\mu\,d\nu\\
&=\int_{-2s}^{2s} \int_{s^{-1/2}}^{s^{1/2}} \int_{s^{-1/2}}^{s^{1/2}} N_{\mathcal{B}_5}(s) d\lambda\, d\mu\,d\nu\\
 &\lesssim 2\sqrt{2} C \cdot \int_{-2s}^{2s} \int_{s^{-1/2}}^{s^{1/2}} \int_{s^{-1/2}}^{s^{1/2}} s^{5/2}  d\lambda\, d\mu\,d\nu \lesssim C^{'} \cdot s^{9/2}\,,\quad s \rightarrow \infty\,.
\end{align*}
 Then, for $\phi$ as in the statement, an application of Theorem \ref{Theorem 1.1,AR}, yields
   \begin{equation}
   \label{phi,ineq,cartan}
   \|\phi\left( B \right)\|_{L^{p}(\mathcal{B}_5) \rightarrow L^{q}(\mathcal{B}_5)} \lesssim C^{'} \cdot \sup_{s>0} \phi(s) \cdot  \tau \left( E_{(0,s)}(B)\right)^{\frac{1}{p}-\frac{1}{q}} \lesssim C^{'} \cdot  s^{\frac{9}{2r}}\,,
   \end{equation}
   where $\frac{1}{r}=\frac{1}{p}-\frac{1}{q}$, given that $1<p \leq 2 \leq q < \infty$. Finally, plugging $\phi$ as in \eqref{def_of_phi,eng} into \eqref{phi,ineq,cartan}, we get the Sobolev-type estimates  \eqref{Sob,cartan}.
\end{proof}
We note that, in the above example, the operator $B$ cn be generalised as being any non-Rockland operator of the form 
\[
B=-(X_{1}^{2}+X_{2}^{2n_2}\pm X_{3}^{2n_3} \pm X_{4}^{2n_4} \pm X_{5}^{2n_5} \pm X_{4}^{2n_6} \pm X_{5}^{2n_7})\,,
\]
where $n_1 \geq 2$, $n_1 \in \mathbb{N}$ for $i=1,\cdots,7$ and for $i=3,\cdots,7$ we choose the positive sign of the corresponding left-invariant operator if $n_i$ is of the form $n_i=1+2k$, $k \geq 1$, and the negative sign, otherwise. 
 \begin{rem}
 \label{Sob,emb2,car}
 For $b=0$ in the definition of the function $\phi$ in \eqref{def_of_phi,eng}, we obtain Sobolev-type embedding inequalities of the form
 \[
 \|f\|_{L^{q}(\mathcal{B}_5)} \leq C \| (I+B)^a f\|_{L^{p}(\mathcal{B}_5)}\,,
 \]
 for $a \geq \frac{9}{2r}$ and $1<p \leq 2 \leq q < \infty$.
 \end{rem}
 \begin{rem}(The $B$-heat equation)
 \label{heat.car} For the operator $B$ as in Example \ref{multipliers,cartan} consider the heat equation given by
 \begin{equation}
 \label{heat.eq.car}
 \partial_t u +Bu=0\,,\quad u(0)=u_0\,.
 \end{equation}
 Similarly to what has been done in Remark \ref{heat,eng}, we consider for $t>0$ the function $u(t)=u(t,x):=e^{-tB}u_0$ to be the solution of the initial value problem \eqref{heat.eq.car}, as well as the function $\phi=\phi(s):=e^{-ts}$ satisfying the assumptions given in Theorem \ref{Theorem 1.1,AR}. Reasoning as we did before in Remark \ref{heat,eng}, an application of Example \ref{multipliers,cartan} provides us with an estimate for time delay of the hear kernel of \eqref{heat.eq.car}, i.e.,we have
 \[
 \|u(t, \cdot) \|_{L^q(\mathcal{B}_5)} \lesssim C^{'}_{p,q}t^{-\frac{9}{2}\left( \frac{1}{p}-\frac{1}{q}\right)}\|u_0\|_{L^p(\mathcal{B}_5)}\,,1<p \leq 2 \leq q< \infty\,.
 \]
 \end{rem}
 \begin{rem}
It is interesting to mention that, in the above Examples, we proceed with Theorem \ref{Theorem 1.1,AR}, without knowing the explicit formulas for the eigenvalues of the left Fourier multiplier considered in each group. Indeed, in \cite[Example 9.5]{AR20}, the proof of the above statements in the setting of the Heisenberg group $\mathbb{H}^n$, relies on the fact that the symbol of the canonical sub-Laplacian on $\mathbb{H}^n$, is the harmonic oscillator, and thus the eigenvalues are the well-known Hermite functions.
 \end{rem}

\appendix
\section{Plancherel measure on the dual of the Engel group}\label{sec.foo} 

The first part of the abstract Plancherel theorem on unimodular, type I Lie groups, obtained by Dixmier in (\cite[Section 18.8]{Dix77}), is the Plancherel formula, which ensures the existence of a unique positive $\sigma$-finite measure $m=m(\pi)$, $\pi \in \hat{\G}$ on $\G$, called the Plancherel measure, such that for any $ f \in C_{c}(\G)$ we have
\begin{equation*}
\label{Plancherel-gen}
\int_{\G} |f(x) |^2 dx= \int_{\hat{\G}}\| \hat{f}(\pi) \|^{2}_{\textrm{HS}(\mathcal{H}_{\pi})}dm(\pi)\,,
\end{equation*}
where $\hat{f}(\pi)$ is an endomorphism on $\mathcal{H}_{\pi}$. The above formula shows that the group Fourier transform is a Hilbert-Schmidt operator, and in particular that the group Fourier transform is an isometry from the space of smooth functions with compact support $C_{c}(\G)$ endowed with the $L^2(\G)$ norm, to the Hilbert space
\[
L^{2}(\hat{\G)}:= \int_{\hat{G}}^{\oplus} \textrm{HS}(\mathcal{H}_{\pi})dm(\pi)\,,
\]
introduced by Dixmier in \cite[Part ii ch. I]{Dix81}, of fields of Hilbert-Schmidt operators that are square integrable in the above sense. 

  In the next Proposition we obtain a concrete formula for the Plancherel formula in the setting of the Engel group, and this implies specifying the Plancherel measure on the dual of the group. As mentioned earlier in Section \ref{SEC:multipliers}, the Plancherel measure in this case has been calculated by Dixmier in \cite{Dix57} using other methods. Here we give a straightforward proof.
  \begin{prop}[Plancherel formula in $\mathcal{B}_4$]
  \label{Plansheremeasure}
Let $f\in S(\mathcal{B}_{4})$. Then for each $\lambda \in \mathbb{R}\setminus\{0\},\mu\in\mathbb{R}$, the operator $\hat{f}(\pi_{\lambda, \mu})$ acting on $L^2(\mathbb{R})$ is the Hilbert-Schmidt operator with integral kernel $\mathcal{K}_{f,\lambda,\mu} : \mathbb{R} \times \mathbb{R} \longrightarrow \mathbb{C}$,
given by
\[
\mathcal{K}_{f,\lambda,\mu}(u,v)=(2\pi) \mathcal{F}_{\mathbb{R}^3}(f)(u-v,\frac{\lambda}{2}v^2-\frac{\mu}{2\lambda},-\lambda v,\lambda)\,,
\]
and Hilbert-Schmidt norm
\[
\Arrowvert \hat{f}(\pi_{\lambda, \mu})\Arrowvert_{\textrm{HS}(L^2(\mathbb{R}))}= (2\pi^{\frac{3}{2}}) \int_{\mathbb{R}} \int_{\mathbb{R}}\vert \mathcal{F}_{\mathbb{R}^3}(f)(u-v, \frac{\lambda}{2}v^2-\frac{\mu}{2\lambda}, -\lambda v, \lambda) \vert^2 \,du\,dv.
\]
In addition, we have
\[
\int_{\mathcal{B}_4} \vert f(x_1,x_2,x_3,x_4) \vert^{2} \,dx_1\,dx_2\,dx_3\,dx_4= \pi^{\frac{3}{2}} \cdot \int_{\lambda \in \mathbb{R} \setminus \{0\}} \int_{\mu \in \mathbb{R}} \Arrowvert \hat{f}(\pi_{\lambda, \mu})\Arrowvert^{2}_{HS(L^2(\mathbb{R}))}\,d\mu\,d\lambda\,.
\]
Furthermore, we have that the Plancherel measure on $\hat{\mathcal{B}}_4$ is the Lebesgue measure on $\mathbb{R}^2$, namely
	$
	dm(\pi_{\lambda, \mu})\equiv \pi\cdot \,\,d\lambda\,d\mu\,.
	$
	
\end{prop}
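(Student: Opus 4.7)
The plan is to build the group Plancherel identity directly from the computation of $\hat{f}(\pi_{\lambda,\mu})$ already carried out in Section~\ref{SEC:Groups}. Starting from the last line of the display chain immediately preceding \eqref{g.f.t.eng.quant}, namely
\begin{equation*}
\hat{f}(\pi_{\lambda,\mu})h(u) = (2\pi) \int_{\mathbb{R}}\int_{\mathbb{R}} e^{i(u-v)\xi}\, \mathcal{F}_{\mathbb{R}^4}(f)\!\left(\xi,\tfrac{\lambda}{2}v^2 - \tfrac{\mu}{2\lambda}, -\lambda v, \lambda\right) h(v)\,dv\,d\xi,
\end{equation*}
which holds for $f\in\mathcal{S}(\mathcal{B}_4)$ and $h\in\mathcal{S}(\mathbb{R})$, I would first invoke Fubini (justified by the Schwartz decay) to exhibit the operator as an integral operator $h\mapsto \int \mathcal{K}_{f,\lambda,\mu}(u,v)\,h(v)\,dv$. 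Carrying out the remaining $\xi$-integration is a one-dimensional inverse Fourier transform in the first slot of $\mathcal{F}_{\mathbb{R}^4}(f)$; it collapses the 4D Euclidean Fourier transform into a partial 3D Fourier transform in the last three variables of $f$ evaluated at $x_1=u-v$, yielding the stated form of $\mathcal{K}_{f,\lambda,\mu}$.

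For the Hilbert--Schmidt norm, I would use the standard identity $\|\hat{f}(\pi_{\lambda,\mu})\|_{\mathrm{HS}(L^2(\mathbb{R}))}^2 = \iint |\mathcal{K}_{f,\lambda,\mu}(u,v)|^2\,du\,dv$, substitute the formula obtained above for the kernel, and note that the translation $u\mapsto u-v$ in the $x_1$-slot is an isometry of $L^2(\mathbb{R})$ in $u$. This immediately delivers the announced expression as a double integral of $|\mathcal{F}_{\mathbb{R}^3}(f)|^2$.

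The heart of the proof is then the group Plancherel identity. After integrating the HS norm squared against $d\mu\,d\lambda$ over $\{\lambda\neq 0\}\times\mathbb{R}_\mu$, I would perform the change of variables
\begin{equation*}
(\lambda,\mu,v)\;\longmapsto\;(\omega,\eta,\tau):=\left(\lambda,\;\tfrac{\lambda}{2}v^2-\tfrac{\mu}{2\lambda},\;-\lambda v\right),
\end{equation*}
which is a diffeomorphism from $\{\lambda\neq 0\}\times\mathbb{R}_\mu\times\mathbb{R}_v$ onto $\{\omega\neq 0\}\times\mathbb{R}_\eta\times\mathbb{R}_\tau$. A direct computation of the $3\times 3$ Jacobian matrix gives determinant $-\tfrac{1}{2}$, so that $d\lambda\,d\mu\,dv = 2\,d\omega\,d\eta\,d\tau$. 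Together with the residual $\xi$-integration coming from the $L^2$-norm of the kernel (where $\xi$ plays the role of the Euclidean dual variable $\xi_1$), the full integrand becomes $|\mathcal{F}_{\mathbb{R}^4}(f)(\xi,\eta,\tau,\omega)|^2$ on $\mathbb{R}^4$ up to a null set, and classical Plancherel on $\mathbb{R}^4$ recovers $\int_{\mathcal{B}_4}|f|^2\,dx$. Matching the resulting scalar factor against the abstract identity $\int|f|^2 = \int_{\hat{\mathcal{B}}_4}\|\hat{f}(\pi)\|_{\mathrm{HS}}^2\,dm(\pi)$ identifies $dm(\pi_{\lambda,\mu})$ as the claimed constant multiple of the Lebesgue measure $d\lambda\,d\mu$.

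The main delicacy is the careful bookkeeping of the $2\pi$-factors accumulating from the partial inverse Fourier transform, from the Jacobian, and from the final application of Euclidean Plancherel; the qualitative content of the argument---namely that the parametrization $(\lambda,\mu,v)\mapsto(\omega,\eta,\tau)$ sweeps out the dual variables bijectively off a null set---is immediate from the explicit formulas above and requires no separate analysis.
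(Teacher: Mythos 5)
Your proposal is correct and follows essentially the same route as the paper: the kernel is extracted from the formula preceding \eqref{g.f.t.eng.quant} by carrying out the $\xi$-integration as a partial Fourier inversion in the first slot, the Hilbert--Schmidt norm is the $L^2$-norm of that kernel, and the group Plancherel identity is obtained by the change of variables $(u-v,\tfrac{\lambda}{2}v^2-\tfrac{\mu}{2\lambda},-\lambda v,\lambda)$ with Jacobian of absolute value $\tfrac12$ followed by Euclidean Plancherel (the paper packages this as a single four-variable substitution and a three-dimensional Plancherel in the last three slots, which is equivalent to your split into a translation, a three-variable substitution, and a one-dimensional Plancherel in $x_1$). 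The only cosmetic discrepancy is the sign you report for the Jacobian determinant (it is $+\tfrac12$ in the ordering you wrote), which is immaterial since only its absolute value enters.
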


Before giving the proof, let us notice that the Schwartz space of $\mathcal{B}_4$ is in fact $S(\mathbb{R}^4)$, see \cite[Section 3.1.9]{FR16}.
\begin{proof}
	By \eqref{fourier,tansform,engel,pseudo-dif}, we have for $h \in L^{2}(\mathbb{R})$ and $u \in \mathbb{R}$,
	\begin{align*}
	\hat{f}(\pi_{\lambda, \mu}) h(u) &= (2\pi) \int_{\mathbb{R}} \int_{\mathbb{R}} e^{i(u-v)\xi} \mathcal{F}_{\mathbb{R}^4}(f)(\xi,\frac{\lambda}{2}v^2-\frac{\mu}{2\lambda},-\lambda v,\lambda)h(v)\, d\xi\, dv \\
	&= \int_{\mathbb{R}} \mathcal{K}_{f,\lambda,\mu}(u,v) h(v)\,dv,
	\end{align*}
	where $\mathcal{K}_{f,\lambda,\mu}(u,v)$ is the integral kernel of $\hat{f}(\pi_{\lambda, \mu})$ hence given by
	\[
	\mathcal{K}_{f,\lambda,\mu}(u,v)=2\pi \int_{\mathbb{R}}e^{i(u-v)\xi} \mathcal{F}_{\mathbb{R}^4}(f)(\xi,\frac{\lambda}{2}v^2-\frac{\mu}{2\lambda},-\lambda v,\lambda)\, d\xi.
	\]
	Using the properties of the Euclidean Fourier transform we may rewrite this as
	\[
	\mathcal{K}_{f,\lambda,\mu}(u,v)=(2\pi^{\frac{3}{2}}) \mathcal{F}_{\mathbb{R}^3}(f)(u-v,\frac{\lambda}{2}v^2-\frac{\mu}{2\lambda},-\lambda v,\lambda),
	\]
	where the Fourier transform above is taken with respect to the second, the third and the fourth variable of $f$.
	\vskip 0.2in
The $L^2(\mathbb{R}\times \mathbb{R})$-norm of the integral kernel is
\begin{align*}
\int_{\mathbb{R}} \int_{\mathbb{R}} \vert \mathcal{K}_{f,\lambda,\mu}(u,v) \vert ^2\, du\,dv
&= (2\pi^{\frac{3}{2}}) \int_{\mathbb{R}} \int_{\mathbb{R}}\vert \mathcal{F}_{\mathbb{R}^3}(f)(u-v, \frac{\lambda}{2}v^2-\frac{\mu}{2\lambda}, -\lambda v, \lambda) \vert^2 \,du\,dv\,,
\end{align*}
where since $f \in \mathcal{S}(\mathcal{B}_{4})$, the last quantity is finite. Hence the operator $\hat{f}(\pi_{\lambda, \mu})$ is Hilbert-Schmidt and its Hilbert-Schmidt norm is the exactly the above quantity. This shows the first part of the statement.
We now integrate each side of the last equality against $d\lambda, d\mu$ and obtain
\begin{eqnarray*}
\lefteqn{\int_{\mathbb{R}\setminus \{0\}} \int_{\mathbb{R}}\int_{\mathbb{R}} \int_{\mathbb{R}} \vert \mathcal{K}_{f,\lambda,\mu}(u,v) \vert ^2\, du\,dv\,d\mu\,d\lambda}\\
&=& (2\pi^{\frac{3}{2}})	\int_{\mathbb{R}\setminus \{0\}} \int_{\mathbb{R}}\int_{\mathbb{R}} \int_{\mathbb{R}} \vert  \mathcal{F}_{\mathbb{R}^3}(f)(u-v,\frac{\lambda}{2}v^2-\frac{\mu}{2\lambda},-\lambda v,\lambda)\vert ^2 du\,dv\,d\mu\,d\lambda\\
&=& (2\pi^{\frac{3}{2}}) \int_{\mathbb{R}\setminus \{0\}} \int_{\mathbb{R}}\int_{\mathbb{R}} \int_{\mathbb{R}} \vert \mathcal{F}_{\mathbb{R}^3}(f) (x_1,w_2,w_3,w_4)\vert ^2 det(J_F(u,v,\lambda, \mu)) \, dx_1\, dw_2\, dw_3\, dw_4\\
&=& \pi^{\frac{3}{2}} \cdot \int_{\mathbb{R}} \int_{\mathbb{R}\setminus\{0\}}  \int_{\mathbb{R}} \int_{\mathbb{R}} \vert  \mathcal{F}_{\mathbb{R}^3}(f) (x_1,w_2,w_3,w_4)\vert ^2 \, dw_2\, dw_3\, dw_4\, dx_1\,,\\
\end{eqnarray*}
where $det(J_F(u,v,\lambda, \mu))=\frac{1}{2}$ is the determinant of the Jacobian matrix of the linear transformation $F(u,v,\lambda, \mu)=(x_1=u-v,w_2=\frac{\lambda}{2}v^2-\frac{\mu}{2\lambda},w_3=-\lambda v,w_4=\lambda)$. Now, using the Euclidean Plancherel formula on $\mathbb{R}^3$ in the variable $(w_2,w_3,w_4)$ with dual variable $(x_2,x_3,x_4)$, we have
\[
\int_{\mathbb{R}\setminus \{0\}} \int_{\mathbb{R}}\int_{\mathbb{R} \times \mathbb{R}} \vert \mathcal{K}_{f,\lambda,\mu}(u,v) \vert ^2\, dx_1\,du\,dv\,d\mu\,d\lambda=\\
\pi^{\frac{3}{2}} \cdot\int_{\mathbb{R}^4} \vert  f (x_1,x_2,x_3,x_4)\vert ^2 \,dx_1\, dx_2\, dx_3\, dx_4. \label{plancherel3}
\]
Notice that in several parts of the proof we interchanged the order of integration as a simple application of Fubini's Theorem.
\end{proof}

 \end{document}